\documentclass[fleqn,reqno,11pt,a4paper,final]{amsart}

\usepackage[a4paper,left=20mm,right=20mm,top=20mm,bottom=20mm,marginpar=20mm]{geometry}
\usepackage{amsmath}
\usepackage{amssymb}
\usepackage{amsthm}
\usepackage{amscd}
\usepackage[utf8]{inputenc}
\usepackage{bbm}
\usepackage{color}
\usepackage[english=american]{csquotes}
\usepackage[final]{graphicx}
\usepackage{subcaption}
\usepackage{soul}
\usepackage{hyperref}
\usepackage{calc}

\usepackage{mathptmx}

\usepackage{mathtools}
\usepackage{tikz}
\usepackage{graphicx}
\usepackage{float}
\usepackage{enumitem}
\usepackage{multicol}
\usepackage{esint}
\usepackage{pgfplots}
\pgfplotsset{compat=1.18}

\usetikzlibrary{angles}

\DeclareSymbolFont{cmcal}{OMS}{cmsy}{m}{n}
\DeclareSymbolFontAlphabet{\mathcal}{cmcal}

\usepackage[backend=biber,bibencoding=utf8,giveninits,maxnames=20,style=alphabetic,isbn=false, doi=false, eprint= true, url= false,date=year]{biblatex}
\definecolor{luh-dark-blue}{rgb}{0.0, 0.313, 0.608}
\definecolor{luh-light-blue}{rgb}{0.6, 0.725, 0.847}
\definecolor{luh-green}{rgb}{0.784, 0.827, 0.09}

\graphicspath{{../Pictures/}}

\numberwithin{equation}{section}

\newtheoremstyle{thmlemcorr}{10pt}{10pt}{\itshape}{}{\bfseries}{.}{10pt}{{\thmname{#1}\thmnumber{ #2}\thmnote{ (#3)}}}
\newtheoremstyle{thmlemcorr*}{10pt}{10pt}{\itshape}{}{\bfseries}{.}\newline{{\thmname{#1}\thmnumber{ #2}\thmnote{ (#3)}}}
\newtheoremstyle{remexample}{10pt}{10pt}{}{}{\bfseries}{.}{10pt}{{\thmname{#1}\thmnumber{ #2}\thmnote{ (#3)}}}
\newtheoremstyle{ass}{10pt}{10pt}{}{}{\bfseries}{.}{10pt}{{\thmname{#1}\thmnumber{ A#2}\thmnote{ (#3)}}}

\theoremstyle{thmlemcorr}
\newtheorem{theorem}{Theorem}
\numberwithin{theorem}{section}
\newtheorem{lemma}[theorem]{Lemma}
\newtheorem{corollary}[theorem]{Corollary}
\newtheorem{proposition}[theorem]{Proposition}

\theoremstyle{thmlemcorr*}
\newtheorem*{theorem*}{Theorem}
\newtheorem{lemma*}[theorem]{Lemma}
\newtheorem{corollary*}[theorem]{Corollary}
\newtheorem{proposition*}[theorem]{Proposition}
\newtheorem{problem*}[theorem]{Problem}
\newtheorem{conjecture*}[theorem]{Conjecture}
\newtheorem{definition*}[theorem]{Definition}
\newtheorem{assumption*}[theorem]{Assumption}

\theoremstyle{remexample}
\newtheorem{remark}[theorem]{Remark}

\theoremstyle{ass}

\newcommand{\Acal}{\mathcal{A}}

\newcommand{\Fcal}{\mathcal{F}}

\newcommand{\Hcal}{\mathcal{H}}

\newcommand{\Lcal}{\mathcal{L}}
\newcommand{\Mcal}{\mathcal{M}}

\newcommand{\Ocal}{\mathcal{O}}

\newcommand{\Cbb}{\mathbb{C}}

\newcommand{\Nbb}{\mathbb{N}}

\newcommand{\Rbb}{\mathbb{R}}

\newcommand{\Tbb}{\mathbb{T}}
\newcommand{\T}{\mathbb{T}}

\newcommand{\Zbb}{\mathbb{Z}}

\DeclareMathOperator{\id}{Id}

\DeclareMathOperator{\dist}{dist}

\renewcommand{\Re}{\operatorname{Re}}
\newcommand{\We}{\operatorname{We}}
\newcommand{\Ca}{\operatorname{Ca}}
\newcommand{\Bo}{\operatorname{Bo}}

\newcommand{\ee}{\mathrm{e}}
\newcommand{\ii}{\mathrm{i}}
\newcommand{\set}[2]{\left\{\, #1 \ \textup{:}\ #2 \,\right\}}

\newcommand{\setb}[2]{\bigl\{\, #1 \ \textup{:}\ #2 \,\bigr\}}
\newcommand{\setB}[2]{\Bigl\{\, #1 \ \textup{:}\ #2 \,\Bigr\}}

\newcommand{\norm}[1]{\|#1\|}

\newcommand{\normb}[1]{\bigl\|#1\bigr\|}
\newcommand{\normB}[1]{\Bigl\|#1\Bigr\|}

\newcommand{\abs}[1]{|#1|}
\newcommand{\abslr}[1]{\left|#1\right|}

\newcommand{\absb}[1]{\bigl|#1\bigr|}

\newcommand{\dpr}[1]{\langle #1 \rangle}	
\newcommand{\dprlr}[1]{\left\langle #1 \right\rangle}

\newcommand{\dd}{\;\mathrm{d}}

\newcommand{\N}{\mathbb{N}}
\newcommand{\R}{\mathbb{R}}
\newcommand{\C}{\mathbb{C}}

\newcommand{\Z}{\mathbb{Z}}

\newcommand{\eps}{\varepsilon}

\def\div{\mathrm{div\,}}

\def\XXint#1#2#3{{\setbox0=\hbox{$#1{#2#3}{\int}$}
\vcenter{\hbox{$#2#3$}}\kern-.5\wd0}}

\renewcommand{\eps}{\varepsilon}
\renewcommand{\phi}{\varphi}

\newcommand{\Hev}[1]{H^{#1}_{\mathrm{ev}}}
\newcommand{\Hevd}[1]{\dot H^{#1}_{\mathrm{ev}}}
\newcommand{\Lev}[1]{L_{#1,\mathrm{ev}}}
\newcommand{\Levd}[1]{\dot L_{#1,\mathrm{ev}}}

\begin{document}


\title[]{Analysis of a three-dimensional fluid flow in rotating cylinders}

\author{Juri Joussen}
\address{\textit{Juri Joussen:} Institute of Analysis, Dynamics and Modeling, University of Stuttgart, Pfaffenwaldring~57, 70569 Stuttgart, Germany}
\email{juri.joussen@iadm.uni-stuttgart.de}

\author{Janne Laudien}
\address{\textit{Janne Laudien:} University of Stuttgart, Pfaffenwaldring~57, 70569 Stuttgart, Germany}
\email{st179362@stud.uni-stuttgart.de}

\author{Christina Lienstromberg}
\address{\textit{Christina Lienstromberg:}  Institute of Analysis, Dynamics and Modeling, University of Stuttgart, Pfaffenwaldring~57, 70569 Stuttgart, Germany}
\email{christina.lienstromberg@iadm.uni-stuttgart.de}

\author{Juan J.L. Velázquez }
\address{\textit{Juan J.L. Velázquez:} Institute of Applied Mathematics, University of Bonn, Endenicher Allee~60, 53115 Bonn, Germany}
\email{velazquez@iam.uni-bonn.de}

\begin{abstract}
Subject of consideration is the modelling and analysis of a capillary-driven three-dimensional rimming-flow problem. We present the derivation of a fourth-order quasilinear degenerate-parabolic partial differential equation for the height $h > 0$ of a fluid film coating the inner wall of a cylinder that rotates around a horizontal axis. The equation arises from a rescaled Navier--Stokes system for thin fluid films by means of a lubrication approximation and accounts for the physical effects of rotation, surface tension and gravity. The effect of the latter is measured by a non-dimensional parameter $0 \leq \delta \ll 1$.\\
\noindent 
We characterise the structure of the steady states depending on the ratio $\ell$ of the cylinder length to its radius. 
In the absence of gravity ($\delta=0$), in the case $\frac{\ell}{\pi} \notin \Z$, steady states are unique.
For $0 < \delta \ll 1$, steady states are shown to be locally unique for any $\ell$. These steady states are stable for $\ell < \pi$, while they are unstable for $\ell > \pi$.
\\
Furthermore, in the absence of gravity, for all $\ell > 0$, we show that there exists a manifold of time-periodic solutions. In the critical case $\ell = \pi$, we study the dynamics of the solutions close to the manifold of periodic orbits in the critical case $\ell = \pi$ on the large time scale $\tau = \delta^2 t$. It turns out that in the time scale $\tau$ this dynamics can be approximated by a system of ordinary differential equations.
\end{abstract}
\vspace{4pt}

\maketitle

\noindent\textsc{MSC (2020): 35B35, 35B40, 35K25, 35K59, 35K65, 35Q35, 37L10, 37L15, 76A20, 76D03, 76D08, 76U05}

\noindent\textsc{Keywords: rimming flow, higher-order equations, degenerate-parabolic equation, long-time behaviour, stability, slow manifold, Poincaré--Lindstedt method, multiple time scales}




\section{Introduction}

The present paper investigates a capillary-driven three-dimensional thin-film rimming flow. A \emph{rimming flow} is the flow of a fluid partially filling a rotating horizontal cylinder with gravity acting in the perpendicular direction. We consider a cylinder of radius $R>0$ and length $L>0$ rotating around its axis at constant angular velocity $\omega>0$. We assume that the cylinder is partially filled by a Newtonian incompressible fluid which forms for certain values of $\omega$ a thin, coating fluid layer of mean thickness $d\ll R$ at the inner wall of the impermeable solid cylinder. We denote by $\tilde g$ the gravitational acceleration and by $\tilde\Omega(\tilde t)$ the region occupied by the fluid at time $\tilde t$. See Figure \ref{fig:rimming-flow} for the described setting.

\begin{center}
\begin{figure}[ht]
	\begin{tikzpicture}[xscale=0.2, yscale=0.3]
		\begin{scope}[xshift=-12,yshift=5cm]
		\draw [thick] (0,0) -- (18cm,2cm) node[midway,label=\(L\)] {};
		\end{scope}
		\begin{scope}[xshift=12,yshift=-5cm]
		\draw [thick] (0,0) -- (18cm,2cm);
		\end{scope}
		\begin{scope}
		\draw [thick, fill=luh-dark-blue!50] circle (5);
		\fill [fill=white] (0:4.25) 
		[out=90,in=300] to (30:4) 
		[out=120,in=330]   to (60:4.25) 
		[out=150,in=0]  to (90:4) 
		[out=180,in=30] to (120:4.25) 
		[out=210,in=60]  to (150:4) 
		[out=240,in=90] to (180:4.25) 
		[out=270,in=120] to (210:4) 
		[out=300,in=150] to (240:4.25) 
		[out=330,in=180] to (270:4) 
		[out=0,in=210] to (300:4.25) 
		[out=30,in=240] to (330:4) 
		[out=60,in=270] to (0:4.25);
		\filldraw circle (0.05);
		\draw [thick,->,color=magenta] (5.5,0) arc (0:270:5.5) node[label=below:\(\omega\)] {}; 
		\draw [thick] (0,0) -- (135:5) node[midway,label=\(R\)] {}; 
		\draw [thick] (210:5) -- (210:4) node[midway,label=right:\(d\)] {}; 
		\draw[thick,->] (5,-5) node[below] {\textcolor{luh-dark-blue}{$\tilde \Omega(\tilde t)$}} -- (4,-1.9) {};
		\end{scope}
		\begin{scope}[xshift=18cm,yshift=2cm]
		\draw [thick, fill=luh-light-blue] circle (5);
		\fill [fill=white] (0:4.25) 
		[out=90,in=300] to (30:4) 
		[out=120,in=330]   to (60:4.25) 
		[out=150,in=0]  to (90:4) 
		[out=180,in=30] to (120:4.25) 
		[out=210,in=60]  to (150:4) 
		[out=240,in=90] to (180:4.25) 
		[out=270,in=120] to (210:4) 
		[out=300,in=150] to (240:4.25) 
		[out=330,in=180] to (270:4) 
		[out=0,in=210] to (300:4.25) 
		[out=30,in=240] to (330:4) 
		[out=60,in=270] to (0:4.25);
		\filldraw circle (0.05);
		\draw[thick,->] (6.5,3) -- (6.5,0) node[right] {\footnotesize{$\tilde g$}} -- (6.5,-3) {};
		\end{scope}
		\draw [thick,dashed,->] (0,0) -- (18cm,2cm) node[label=\(z\)] {}; 
	\end{tikzpicture}
\caption{Thin, coating fluid film on impermeable solid cylinder wall.}
\label{fig:rimming-flow}
\end{figure} 
\end{center}

We assume that the free surface of the fluid film is the graph of a function $\tilde h(\tilde t,\theta,\tilde z)>0$ which denotes the film height above the cylinder wall at time $\tilde t$, angle $\theta$ and length $\tilde z$ in the axis direction. Thence, we start our analysis by modelling the described fluid flow by the Navier--Stokes equations with a free surface described by $\tilde h$. Since we assume our fluid film to be very thin ($d\ll R$), we may reduce our model by means of a formal asymptotic limit $\eps\coloneq\tfrac{d}{R}\to 0$. This procedure is well-known under the term lubrication approximation, see e.g. \cite{Ockendon_1995}. Together with a number of careful scaling assumptions on the physical parameters of the problem, most prominently that the flow is capillary-driven, this procedure leads to a single closed evolution equation for the (rescaled) film height $h(t,\theta,z)$:
\begin{equation}\label{eq:rimming-flow_intro}
\begin{cases}
	h_t + h_\theta + \gamma\,\div\bigl(h^3 \nabla(\Delta h + h)\bigr) - \delta \bigl(h^3\cos(\theta)\bigr)_\theta = 0,
	& 
	t > 0,\, \theta \in \Tbb, 0<z<\ell 
	\\
	h(0,\cdot,\cdot) = h_0,
	&
	\theta \in \Tbb, 0<z<\ell,
\end{cases}
\end{equation}
where $\ell\coloneq\tfrac{L}{R}$ is of order one. This equation has been previously studied in \cite{pukhnachov2005capillary}.
Moreover, equation \eqref{eq:rimming-flow_intro} is in many respects similar to the well-known classical thin-film equation (cf. for instance \cite{BF1990} for the one-dimensional case or \cite{bertsch_passo_garcke_gruen_1998} for the higher-dimensional analogue) but involves additional terms due to the circular geometry and effects of other involved physical quantities like the rotation of the cylinder and gravity. Indeed, the rotation leads, via a no-slip boundary condition for viscous fluids in the Navier--Stokes system, to the linear transport term $h_\theta$. Therefore, it matches exactly the rotation speed of the cylinder which has been rescaled to one in dimensionless coordinates. Moreover, the (dominating) surface tension gives rise to the highest-order term $\gamma\,\div\big(h^3\nabla(\Delta h + h)\big)$ with the lower-order term $\gamma\,\div\big(h^3\nabla h\big)$ arising from the circular geometry. The rescaled surface tension parameter $\gamma$ is assumed to be of order one. Finally, the gravitational influence results in the term $\delta\big(h^3\cos(\theta)\big)_\theta$ which we assume to be small, in particular $\delta\ll\gamma$.

For results on a rigorous mathematical justification of the lubrication approximation for the derivation of the classical one-dimensional thin-film equation we refer to \cite{Giacomelli_Otto_2003, Guenther_Prokert_2008}. We mention explicitly the previous paper \cite{jlv2023} for a two-dimensional flow only in the cross-section of the cylinder and the earlier work of Pukhnachov in \cite{pukhnachev1978motion,pukhnachov2005capillary}, where the same scaling limit is investigated and equation \eqref{eq:rimming-flow_intro} is studied.
The present paper revisits and extends the results of those two works in many respects.

We point out that solutions to equation \eqref{eq:rimming-flow_intro} are periodic in the angle $\theta\in\T$ where we identify the circle $\T = \R\ (\mathrm{mod}\ 2\pi)$ with the interval $[0,2\pi)$. However, this problem can only be well-posed if we impose additional boundary conditions in the $z$-direction. For simplicity we impose the Neumann-type boundary conditions $h_z = h_{zzz} = 0$ on $\T\times\{0,\ell\}$ which correspond to a zero-contact-angle condition (i.e. the free surface is perpendicular to the cylinder covers) and a no-flux condition preventing a mass flux through the covers.
While these Neumann-type boundary conditions are commonly used throughout the literature, see again e.g. \cite{BF1990}, we point out that they are, in fact, somewhat artificial. There is no general physical principle requiring the fluid surface to be perpendicular to the covers and thus these boundary conditions likely only hold true in very specific circumstances.
However, a main assumption of a lubrication approximation requires the flow velocity to be nearly parallel to the tangent plane of the supporting solid body
and this condition cannot be expected to hold true close to the cylinder covers. On the other hand, the Neumann-type boundary conditions allow us to make the problem periodic also in $z$ by means of a reflection principle. 
From there the equation is again in line with the work of Pukhnachov \cite{pukhnachov2005capillary} who imposed periodic boundary conditions from the start.

Problem \eqref{eq:rimming-flow_intro}, together with the mentioned boundary conditions becomes a quasilinear degenerate-parabolic problem of fourth-order. It is degenerate as the parabolicity is lost in the event of a film rupture $\min_{\theta,z} h(t,\theta,z) = 0$, when the mobility coefficient $h^3$ is not uniformly positive any more. As there is no general maximum principle for fourth-order problems, this can happen in finite-time even for uniformly positive initial data. Such a loss of parabolicity typically comes with non-uniqueness of solutions, cf. \cite{BF1990}. We refer to the book \cite{DiB1993} for background material on (second-order) degenerate-parabolic equations.

\subsection{Further literature on rimming-flows}

In addition to the already mentioned works \cite{pukhnachev1978motion,pukhnachov2005capillary,jlv2023}, there is a large number of previous mathematical and physical literature on rimming flows although most mathematical results seem to be restricted to a cross-section of the cylinder. We mention just a selection of the existing literature and do not claim that it is complete in any sense.

The comprehensive review article \cite{Seiden_Thomas_2011} presents a concise overview about the influence of a large number of physical quantities (e.g. rotation speed, inertia, viscosity, surface tension) relevant to the dynamics of rimming flows in existing numerical and experimental studies, for instance \cite{OS1978,ESR2004}.

For a more theoretical derivation and analysis of various rimming-flow equations we refer the reader to \cite{Moffatt_1977,OBrien2002,BOB2005,B2006,BLOB2012} and the references therein. For the investigation of weak solutions we refer for instance to \cite{BC2011,CP2014}.

Various mathematical contributions have been dedicated to the case of one-dimensional rimming-flow equations corresponding to a two-dimensional flow in the cross-section of the cylinder. These works generally assume that the fluid flow is homogeneous in the axis direction of the cylinder. For the following literature review we mostly consider the rimming-flow problem equivalent to the \emph{coating-flow} problem where the fluid is on the \emph{outside} of a rotating cylinder because the asymptotic equations are usually very similar.

Various models for capillary rimming flows in the one-dimensional cross-section case have been investigated for instance in \cite{karabash_multi-parameter_2024,benilov2005does,burchard2012convergence,chugunova2010nonnegative,karabut2007two}. Notably, the work \cite{B2006} treats the case of a full cylinder (no cross-section) but for the most part imposes the simplifying assumption $z\ll\theta$. To the best of our knowledge, equation \eqref{eq:rimming-flow_intro}, first proposed in \cite{pukhnachov2005capillary} is the only model of these   
which is, at least at a formal level, an asymptotic limit of the corresponding Navier--Stokes system.

A number of other works treats rimming flows in the case of small surface tension (or no surface tension at all) and/or large gravity. The generally understanding is that a (possibly regularised) shock-like solution evolves on the lower uprising wall of the cylinder \cite{ashmore_hosoi_stone_2003,badali2011regularized,lopes2018multiple,benilov2008steady}.

Finally, we mention a selection of papers on industrial applications of rimming flows. These arise for instance in the coating of photographic films or aluminium foils in the roller-coating industry, the industrial rotational moulding/casting process \cite{Throne_Gianchandani_1980}, or the
papermaking industry, in particular the Fourdrinier machine and paper machine dryers \cite{malkin1937behaviour,white1956residual,white1958effect,yih1960instability}. Capillary rimming flows are of particular interest in the pharmaceutical industry where the coating fluid layer controls the drug transport. 

\subsection{Main results of the paper}
We close the introduction by briefly outlining the organisation and the main results of this paper.

In Section \ref{sec:modelling} we present the derivation of a closed partial differential equation modelling the height of a thin-film rimming flow. The underlying physical model for the rimming-flow problem is a free-boundary incompressible Navier--Stokes system that accounts for the effect of gravity. Since we are interested in thin fluid films, we apply lubrication approximation to reduce the system to a single evolution equation for the height of the fluid film. We mention again that the resulting equation has already been investigated in \cite{pukhnachov2005capillary}. However, we present its derivation for a clear understanding of the involved physical quantities.

In Section \ref{sec:notation_basics}, we introduce the notation and fix the functional setting we work in. In particular, we introduce reflected versions $h$ of the solutions $\tilde h$ that allow us to consider the original boundary-value problem on the torus $\Tbb^2$ instead of $\Tbb\times (0,\ell)$ and thus to work with functions that are even in the direction of the cylinder axis.

Section \ref{sec:well-posedness_and_stat_sol} is concerned with the local well-posedness of the initial-boundary value problem for thin-film rimming-flows with positive initial values and Neumann-type boundary conditions. We show in Theorem \ref{thm:local_existence} that, for a fixed surface tension parameter $\gamma > 0$ of order one and all $\delta \geq 0$, the rimming-flow equation is quasilinear, degenerate-parabolic and of fourth order such that we can use analytic semigroup theory to establish the local existence and uniqueness of a positive maximal solution
\begin{equation*}
    H \in C\bigl([0,T);H^\alpha(\Tbb^2)\bigr) 
    \cap
    C^1\bigl((0,T);L_2(\Tbb^2)\bigr) \cap C^{\alpha/4}\bigl([0,T);L_2(\Tbb^2)\bigr)
\end{equation*}
for some $\alpha>3$.

In Section \ref{sec:steady_states}, we investigate positive stationary solutions of the rimming-flow problem. Subsection \ref{sec:stability_delta=0} treats the case $\delta=0$ when gravity is neglected. More precisely, in Lemma \ref{lem:steady-states_delta=0} we characterise positive steady states depending on the length of the cylinder. 
It turns out that in the case $\frac{\ell}{\pi} \notin \Z$ positive steady states are given by positive constants $H \equiv m$, corresponding to a cylindrical solution profile around the horizontal axis.
On the other hand, in the case $\frac{\ell}{\pi}\in\Z$, the positive steady states of fixed mass $m>0$ form a one-dimensional manifold of the form $H = H(\zeta) = m + a \cos\bigl(\frac{\ell}{\pi} \zeta\bigr)$ with $a\in\R$ and $|a| < m$. In general, these steady states have the shape of deformed cylinders in the sense that each cross section remains a circle centred on the cylinder axis but the radius may vary with $\zeta$.\\
In addition, we investigate the stability and instability properties of the constant steady states and those close to constants. In the case $\ell > \pi$ of a rather long cylinder, we find in Proposition \ref{prop:const-unstable-large-ell} that constant steady states are unstable. On the other hand, in the case $\ell \leq \pi$ of a rather short cylinder, we find in Theorem \ref{thm:stability} that the constant positive steady states are orbitally exponentially stable. More precisely, starting out from a positive initial value $H_0 > 0$ being $H^4$-close to $m > 0$, the solution converges exponentially fast to a travelling wave $v_\ast$, i.e. we have
\begin{equation*}
    \|H(t,\theta,\zeta) - m - v_\ast(\theta-t,\zeta)\|_{H^4} \leq
    C \|H_0 - m\|_{H^4} \ee^{-\omega t}
\end{equation*}
for constants $C, \omega > 0$ and all $t > 0$. The cross sections of the travelling wave are circles around a centre that is slightly shifted away from the axis of rotation. This stability result does also apply for the non-constant steady states $m + a \cos\bigl(\frac{\ell}{\pi} \zeta\bigr)$ if the coefficient $a$ is small enough, i.e. if the steady state is not too far from the constant $m > 0$.\\
The main results of this paper are discussed in Subsection \ref{sec:stability_delta_small} and in Section \ref{sec:slow_manifold}. Indeed, Subsection \ref{sec:stability_delta_small} is concerned with positive steady states in the case of small gravitational influence. In Theorem \ref{thm:perturbed-steady-states}, we state the existence of locally unique positive steady states $H_\delta$ for arbitrary cylinder lengths $\ell$ and provide an expansion in $\delta$, necessary for the subsequent stability analysis. 
In the case $\frac{\ell}{\pi}\notin \Z$, where steady states $H\equiv m$ are unique for $\delta=0$, this result is a consequence of the implicit function theorem. \\
There is an argument in \cite[Prop. 1]{pukhnachov2005capillary} suggesting that, if $0 < \delta \ll 1$, in the case $\tfrac{\ell}{\pi}\in\Z$ there is a curve of solutions for any $\delta > 0$. This means that the non-uniqueness of steady states for $\delta=0$ would persist for small positive $\delta$.
However, in the case $\frac{\ell}{\pi}\in \Z$, using a Lyapunov--Schmidt reduction as well as a Taylor expansion to third order, we show in this paper that there exists a unique solution for small but positive $\delta$. In other words, the gravity selects a unique solution out of the family $H(\zeta)=m + a \cos(\frac{\ell}{\pi} \zeta)$ of steady states for $\delta=0$.

Furthermore, in Theorem \ref{thm:exp-stability-perturbed-steady-state} these positive steady states are shown to be exponentially stable for small cylinder lengths $\ell < \pi$. As expected, in the case of large cylinder lengths $\ell > \pi$, the instability of steady states is shown to persist for small $\delta$ (Theorem \ref{thm:instability_H_delta}).

Finally, in Section \ref{sec:slow_manifold} we investigate the dynamics in the critical case $\ell = \pi$ for small but positive gravitational influence $0<\delta\ll 1$. Here, we can observe dynamics on two distinct time scales: We prove in Theorem \ref{thm:convergence-to-manifold} that solutions that are bounded away from zero reach a $\delta$-neighbourhood of the three-dimensional manifold
\begin{equation*}
    \Mcal(m) \coloneq \setb{m + a \cos(\theta) + b \sin(\theta) + c \cos(z)}{a,b,c\in\R,\ \sqrt{a^2 + b^2} + \abs{c} < m}.
\end{equation*}
The manifold $\Mcal(m)$ corresponds to fluid profiles whose cross-sections at $z = z_0$ are circles centred at $(-a, -b, z_0)$ and with radius $1 - m - c \cos(z_0)$ (cf. Remark \ref{rem:exp-conv-to-travelling-wave}). In this $\delta$-neighbourhood of $\Mcal(m)$ the dynamics are essentially governed by an ODE in the slow time variable $\tau = \delta^2 t$, see Subsection \ref{sec:dynamics_close_to_slow_manifold}.\\
In Subsection \ref{sec:linearise-ode} we compute a linearisation for the derived ODE, and in Subsection \ref{sec:numerics} we present some numerical results on the full nonlinear ODE.


\section{Physical model and derivation of the rimming-flow equation} \label{sec:modelling}

In this section, we introduce the physical setting of the three-dimensional rimming-flow problem and derive the thin-film type evolution equation \eqref{eq:thin-film-equation} for the free surface of the thin fluid film. To this end, we model the fluid flow by the incompressible Navier--Stokes equations and apply lubrication approximation to obtain an asymptotic equation in the limit of a vanishing film height. 

We refer to the work \cite{pukhnachov2005capillary}, where equation \eqref{eq:thin-film-equation} is studied and to the work \cite{jlv2023} for an analogue derivation of the corresponding two-dimensional rimming-flow problem, where the case of a cross section of the cylinder is considered. Moreover, in \cite{pernas_castano_analysis_2020, lienstromberg_analysis_2022,LV2024} the derivation of similar equations in the two-fluid Taylor--Couette setting is presented.


\medskip

\noindent\textsc{\textbf{The incompressible Navier--Stokes equations. }}
Consider a cylinder of radius $R>0$ rotating counter-clockwise with constant angular velocity $\omega > 0$
around a horizontal axis of length $L$. The inner surface of the cylinder is covered by a thin layer of viscous Newtonian fluid, see figure \ref{fig:rimming-flow}. The governing partial differential equations for the fluid motion are the incompressible Navier–Stokes equations
\begin{equation}\label{eq:Navier-Stokes_modelling}
	\begin{cases}
		\rho \left( \tilde{\textbf{u}}_{\tilde{t}} + (\tilde{\textbf{u}} \cdot \nabla) \tilde{\textbf{u}}\right)
		=
		- \nabla \tilde{p} + \tilde{\mu} \Delta \tilde{\textbf{u}} - \rho\tilde{\textbf{g}}
		&
		\text{in } \tilde{\Omega}(\tilde{t})
		\\
		\nabla\cdot \tilde{\textbf{u}} 
		=
		0
		&
		\text{in } \tilde{\Omega}(\tilde{t}),
	\end{cases}
\end{equation}
where the vector $\tilde{\textbf{u}}(\tilde{t},\tilde{\mathbf{x}}) \in \R^3$ 
denotes the velocity field at time $\tilde{t} > 0$ and position $\tilde{\textbf{x}} \in \R^3$, $\tilde{p}(\tilde t,\tilde{\mathbf{x}}) \in \R$ is the pressure, and $\rho \geq 0$ is the constant fluid density. Furthermore, $\tilde{\mu} > 0$ denotes the constant dynamic fluid viscosity and the vector $\tilde{\textbf{g}} = (0,0,\tilde{g})$ with $\tilde{g} > 0$ includes the gravitational acceleration pointing downwards in the direction perpendicular to the rotation axis.

In order to describe spatial positions in the cylinder, it is natural to introduce cylindrical coordinates $\tilde{\textbf{x}} = (\tilde{x}, \tilde{y},\tilde{z}) = (\tilde{r} \cos \theta, \tilde{r} \sin \theta,\tilde z) \in \R^3$. Denoting by $\tilde h(\tilde{t},\theta,\tilde z) > 0$ the function describing the height of the free surface of the fluid film, the area $\tilde{\Omega}(\tilde{t})$ filled by the fluid is given by
\begin{equation*}
    \tilde{\Omega}(\tilde{t}) 
    = 
    \set{(\tilde{r} \cos\theta,\tilde{r}\sin\theta,\tilde z) \in \R^3}{0\leq \theta < 2\pi,\ R -  \tilde h(\tilde{t},\theta, \tilde z) < \tilde{r} < R, 0 < \tilde z < L}
\end{equation*}
for every $\tilde t > 0$.
The boundary
\begin{equation*}
    \partial\tilde{\Omega}(\tilde{t}) = \bigl(\partial B_R(0) \times (0,L)\bigr) \cup \tilde\Gamma_0(\tilde t) \cup \tilde\Gamma_L(\tilde t) \cup \tilde{\Gamma}(\tilde{t})
\end{equation*}
of the region $\tilde{\Omega}(\tilde{t})$ consists
of the cylinder wall $\partial B_R(0) \times (0,L)$, where $B_R(0)\subset\R^2$ denotes the ball of radius $R>0$ around zero, the lateral cylinder covers
\begin{equation*}
    \tilde\Gamma_{\tilde c}(\tilde t) = \set{(\tilde r\cos\theta, \tilde r\sin\theta, \tilde c)}{0\leq\theta<2\pi, R - \tilde h(\tilde t, \theta,\tilde c) \leq \tilde r\leq R}, \quad \tilde c\in\{0,L\},
\end{equation*}
and the free surface
\begin{equation*}
    \tilde{\Gamma}(\tilde{t})
    =
    \set{\bigl(R - \tilde h(\tilde{t},\theta, \tilde z)\bigr)\bigl(\cos\theta,\sin\theta,\tfrac{\tilde z}{R - \tilde h(\tilde{t},\theta,\tilde z)}\bigr)}{0 \leq \theta < 2\pi,\, 0 < \tilde z < L}
\end{equation*}
 of the fluid.
Note that we always assume the function $\tilde h(\tilde{t},\theta,\tilde z) > 0$ to be strictly positive, i.e. that the cylinder wall is everywhere covered by fluid. 

It remains to complement Navier--Stokes equations \eqref{eq:Navier-Stokes_modelling} by suitable boundary conditions. 
First, we assume that the fluid velocity at the cylinder wall equals the rotation speed $\omega > 0$ of the cylinder. This results
in the no-slip condition
\begin{equation}\label{eq:no-slip}
	\tilde{\textbf{u}}
	=
	\omega (-\tilde{y},\tilde{x}, 0), 
	\quad
	(\tilde x,\tilde y,\tilde z) \in \partial B_{R}(0)\times (0,L), 
\end{equation}
where again $B_R(0) \subset \R^2$. 
At the free surface $\tilde{\Gamma}(\tilde{t})$, we assume the normal velocity $\tilde{V}_{\tilde{\mathbf{n}}}$ of the free surface to coincide with the bulk velocity of the fluid in normal direction, i.e.
\begin{equation}\label{eq:normal_velocity}
    \tilde{\textbf{u}}\cdot \tilde{\textbf{n}}
    =
	\tilde{V}_{\tilde{\mathbf{n}}}, 
	\quad 
	\tilde{\textbf{x}} \in \tilde{\Gamma}(\tilde{t}).
\end{equation}
Here, $\tilde{\textbf{n}}$ denotes the outer pointing normal vector at the free surface $\tilde{\Gamma}(\tilde{t})$, i.e. $\tilde{\textbf{n}}$ points from the region $\tilde{\Omega}$ filled by the fluid towards the inside of the cylinder. 
Moreover, at the free surface $\tilde{\Gamma}(\tilde{t})$ we prescribe the stress balance condition
\begin{equation}\label{eq:stress_balance}
    \tilde{\sigma}(\tilde{\textbf{u}},\tilde{p}) \,  \tilde{\textbf{n}}
    =
    \tilde{\gamma} \tilde{\kappa} \tilde{\textbf{n}},
    \quad
     \tilde{\textbf{x}} \in \tilde{\Gamma}(\tilde{t}).
\end{equation}
Here, $\tilde{\gamma} > 0$ denotes the positive constant surface tension, $\tilde{\kappa}$ the mean curvature of the free surface $\tilde \Gamma(\tilde t)$, and
\begin{equation*}
    \tilde{\sigma}(\tilde{\textbf{u}},\tilde{p})
    =
    -\tilde{p} I + 2 \tilde{\mu} \tilde{\epsilon}(\tilde{\textbf{u}})
    \quad \text{with} \quad
    \tilde{\epsilon}(\textbf{u}) 
    = 
    \tfrac{1}{2} 
    \bigl(\nabla \tilde{\textbf{u}} + \nabla \tilde{\textbf{u}}^T\bigr)
\end{equation*}
is the stress tensor of the fluid.
As usual, we may multiply the stress balance condition \eqref{eq:stress_balance}  by the tangential vectors $\tilde{\textbf{t}}_1$, $\tilde{\textbf{t}}_2$ and the normal vector $\tilde{\textbf{n}}$, respectively, to obtain the so-called tangential and normal stress balance conditions
\begin{equation}\label{eq:stress_balance_single}
    \begin{cases}
        \tilde{\textbf{t}}_1\cdot \tilde{\sigma}(\tilde{\textbf{u}},\tilde{p})\, \tilde{\textbf{n}} = 0,
	    &
	    \tilde{\textbf{x}}\in \tilde{\Gamma}(\tilde{t})
	    \\
        \tilde{\textbf{t}}_2\cdot \tilde{\sigma}(\tilde{\textbf{u}},\tilde{p})\, \tilde{\textbf{n}} = 0,
	    &
	    \tilde{\textbf{x}}\in \tilde{\Gamma}(\tilde{t})
	    \\
	    \tilde{\textbf{n}}\cdot \tilde{\sigma}(\tilde{\textbf{u}},\tilde{p})\, \tilde{\textbf{n}} = \tilde{\gamma}\tilde{\kappa},
	    & 
	    \tilde{\textbf{x}} \in \tilde{\Gamma}(\tilde{t}).
    \end{cases}
\end{equation}
The tangential stress balance conditions $\eqref{eq:stress_balance_single}_1$ and $\eqref{eq:stress_balance_single}_2$ signify that there is no tangential stress associated with gradients in $\tilde{\gamma}$ which is due to the assumption of constant surface tension.
The normal stress balance condition $\eqref{eq:stress_balance_single}_3$ states that the stress exerted by the fluid that acts perpendicular (normal) to the interface must balance the curvature force per unit area. 

\medskip

\noindent\textsc{\textbf{Navier--Stokes system in dimensionless variables. }}
We now rewrite the Navier--Stokes equations \eqref{eq:Navier-Stokes_modelling} and the corresponding boundary conditions \eqref{eq:no-slip}, \eqref{eq:normal_velocity}, and \eqref{eq:stress_balance_single} in dimensionless variables. For this purpose, we set $L = \ell R$ and introduce the rescaled variables
\begin{equation}
\label{eq:scaling}
    \begin{cases}
        \tilde x = R x,
        \quad
        \tilde y = R y, 
        \quad
        {\tilde z = R z,} 
        \quad
	    \tilde{t}= \frac{t}{\omega},
        \quad
         \tilde h = R h,
	    \quad
        \tilde{\mathbf{u}} = \omega R \textbf{u},
	    \quad
	    \tilde{p} = \rho \omega^2 R^2 p,
	    &
	    \\
        \We = \frac{\rho R^3 \omega^2}{\tilde{\gamma}},
        \quad 
        \Ca = \frac{R\omega \tilde{\mu}}{\tilde{\gamma}},
        \quad
        \Bo = \frac{\rho \tilde{g} R^2}{\tilde{\gamma}}
        .
    \end{cases}
\end{equation}
In the dimensionless problem, the cylinder has radius $1$ and length $\ell$ and rotates at constant angular velocity $1$.
The positive dimensionless quantities $\We > 0, \Ca > 0$, and $\Bo > 0$ denote the Weber number, the Capillary number, and the Bond number, respectively. The Weber number $\We $ describes the ratio of inertial forces and surface forces, whereas the Capillary number $\Ca$, describes the ratio of viscous forces and surface forces and the Bond number $\Bo$ encodes the ratio of gravity and surface tension. 
We refer to Figure \ref{fig:rimming-flow_dimless} for a sketch of the dimensionless problem.
\begin{center}
\begin{figure}[ht]
	\begin{tikzpicture}[xscale=0.2, yscale=0.3]
	\begin{scope}[xshift=-12,yshift=5cm]
		\draw [thick] (0,0) -- (18cm,2cm) node[midway,label=\(\ell\)] {};
	\end{scope}
	\begin{scope}[xshift=12,yshift=-5cm]
	   \draw [thick] (0,0) -- (18cm,2cm);
	\end{scope}
	\begin{scope}
		\draw [thick, fill=luh-dark-blue!50] circle (5);
		\fill [fill=white] (0:4.25) 
		[out=90,in=300] to (30:4) 
		[out=120,in=330]   to (60:4.25) 
		[out=150,in=0]  to (90:4) 
		[out=180,in=30] to (120:4.25) 
		[out=210,in=60]  to (150:4) 
		[out=240,in=90] to (180:4.25) 
		[out=270,in=120] to (210:4) 
		[out=300,in=150] to (240:4.25) 
		[out=330,in=180] to (270:4) 
		[out=0,in=210] to (300:4.25) 
		[out=30,in=240] to (330:4) 
	    [out=60,in=270] to (0:4.25);
		\filldraw circle (0.05);
		\draw [thick,->,color=magenta] (5.5,0) arc (0:270:5.5) node[label=below:\(1\)] {}; 
		\draw [thick] (0,0) -- (135:5) node[midway,label=\(1\)] {}; 
		\draw [thick] (210:5) -- (210:4) node[midway,label=right:\(\varepsilon\)] {}; 
		\draw[thick,->] (5,-5) node[below] {\textcolor{luh-dark-blue}{${\Omega}({t})$}} -- (4,-1.9) {};
	\end{scope}
	\begin{scope}[xshift=18cm,yshift=2cm]
		\draw [thick, fill=luh-light-blue] circle (5);
		\fill [fill=white] (0:4.25) 
		[out=90,in=300] to (30:4) 
		[out=120,in=330]   to (60:4.25) 
		[out=150,in=0]  to (90:4) 
		[out=180,in=30] to (120:4.25) 
		[out=210,in=60]  to (150:4) 
		[out=240,in=90] to (180:4.25) 
		[out=270,in=120] to (210:4) 
		[out=300,in=150] to (240:4.25) 
		[out=330,in=180] to (270:4) 
		[out=0,in=210] to (300:4.25) 
		[out=30,in=240] to (330:4) 
		[out=60,in=270] to (0:4.25);
		\filldraw circle (0.05);
		\draw[thick,->] (6.5,3) -- (6.5,0) node[right] {\footnotesize{${g}$}} -- (6.5,-3) {};
	\end{scope}
		\draw [thick,dashed,->] (0,0) -- (18cm,2cm) node[label=\(z\)] {}; 
	\end{tikzpicture}
\caption{The rimming-flow problem in dimensionless variables.}
\label{fig:rimming-flow_dimless}
\end{figure} 
\end{center}
The Navier--Stokes system in the dimensionless variables \eqref{eq:scaling} then reads
\begin{equation} \label{eq:Navier-Stokes_dimless}
	\begin{cases}
		\We \bigl(\textbf{u}_t + (\textbf{u}\cdot \nabla) \textbf{u}\bigr)
		=
		- \We \nabla p 
		+\Ca \Delta \textbf{u}
		- \Bo
 		\textbf{e}_2
		&
		\text{in } \Omega(t)
		\\
		\nabla\cdot \textbf{u}
		=
		0
		&
		\text{in } \Omega(t),
	\end{cases}
\end{equation}
where the region $\Omega(t)$ occupied by the fluid is given by
\begin{equation*}
    \Omega(t) = \set{(r\cos\theta,r\sin\theta,z) \in \R^3}{0\leq \theta < 2\pi,\, 1 -  h(t,\theta,z) < r < 1, 0 < z<\ell}.
\end{equation*}
The cylinder wall is given by $\partial B_1(0) \times (0,\ell)$, the spatial cylinder covers by
\begin{equation*}
    \Gamma_c(t) = \set{(r\cos\theta,r\sin\theta,c) \in \R^3}{0\leq \theta < 2\pi,\, 1 -  h(t,\theta,c) \leq r \leq 1}, \quad c\in\{0,\ell\},
\end{equation*}
and free surface by
\begin{equation*}
    \Gamma(t)
    =
    \set{(1- h(t,\theta,z))\bigl(\cos\theta,\sin\theta,\tfrac{z}{1 -  h(t,\theta,z)}\bigr)}{0\leq \theta < 2\pi,\, 0 < z<\ell}.
\end{equation*}
The boundary conditions in the dimensionless variables \eqref{eq:scaling} read
\begin{equation} \label{eq:BC_dimless}
    \begin{cases}
    	\textbf{u}
    	=
    	(-y,x,0),
	    &
	    \textbf{x} \in \partial B_1(0) \times (0,\ell)
	    \\
	    \textbf{u}\cdot \textbf{n}
	    =
	    V_{\mathbf{n}},
	    &
	    \textbf{x} \in \Gamma(t)
	    \\
	    \textbf{t}_1\cdot \sigma(\textbf{u},p)\, \textbf{n} = 0,
	    &
	    \textbf{x} \in \Gamma(t)
	    \\
        \textbf{t}_2\cdot \sigma(\textbf{u},p)\, \textbf{n} = 0,
	    &
	    \textbf{x} \in \Gamma(t)
	    \\
	    \textbf{n}\cdot \sigma(\textbf{u},p)\, \textbf{n} = \kappa,
	    &
	    \textbf{x} \in \Gamma(t).
	\end{cases}
\end{equation}

\medskip

\noindent\textbf{\textsc{The dimensionless Navier--Stokes System in Cylindrical Coordinates. }} 
It is natural to describe spatial positions in the cylinder by means of cylindrical coordinates 
\begin{equation*}
    \mathbf{x} = (r \cos(\theta), r \sin(\theta),z),
    \quad \text{where} \quad 
    \ 0 < r < 1,\ 0 \leq \theta < 2\pi \quad \text{and} \quad 0<z<\ell,
\end{equation*}
and to transform the dimensionless Navier--Stokes problem \eqref{eq:Navier-Stokes_dimless}--\eqref{eq:BC_dimless} accordingly.
Consequently, for $0 \leq \theta < 2\pi$ and $0<z<\ell$ we introduce the orthonormal basis
\begin{equation*}
    \mathbf e_r(\theta)
    \coloneq
    \begin{pmatrix}
        \cos\theta\\
        \sin\theta \\
        0
    \end{pmatrix},
    \quad
    \mathbf e_\theta(\theta)
    \coloneq
    \begin{pmatrix}
        -\sin\theta\\
        \cos\theta \\
        0
    \end{pmatrix}
    \quad \text{and} \quad
    \mathbf e_z(\theta) 
    \coloneq
    \begin{pmatrix}
        0\\ 0 \\ 1
    \end{pmatrix}
\end{equation*}
and write $\mathbf{x} = r \mathbf{e_r} + z \mathbf{e_z}$. Moreover, the velocity field and the pressure can be written as
\begin{equation*}
    \mathbf{u}(t,\mathbf{x})
    \coloneqq 
    u_r(r,\theta,z) \mathbf{e_r}(\theta) + u_\theta(r,\theta,z) \mathbf{e_\theta}(\theta) + u_z(r,\theta,z) \mathbf{e_z}(\theta)
    \quad \text{and} \quad
    \hat p(t,r,\theta)
    \coloneqq
    p\left(t,\mathbf{x}\right).
\end{equation*}
Finally, we introduce the (cylindrical) vector field
\begin{equation*}
    \hat{\mathbf{u}} \coloneq \begin{pmatrix}
        u_r \\ u_\theta \\ u_z
    \end{pmatrix},
\end{equation*}
where $u_r$ denotes the radial component, $u_\theta$ the angular component, and $u_z$ the component in the direction of the rotation axis. We frequently call $u_z$ the \emph{transverse component of the velocity field}. The dimensionless Navier--Stokes equations in cylindrical coordinates $(r,\theta,z)$ read
\begin{equation}\label{eq:NS_polar}
    \begin{cases}
		\We\left(\partial_t u_r + u_r \partial_r u_r + \frac{1}{r} u_\theta \partial_\theta u_r 
		- \frac{1}{r} u_\theta^2
        + u_z \partial_z u_r\right) \\
		\quad=
		- \We\partial_r \hat p 
		+ \Ca
		\left(\partial_r\left[\frac{1}{r} \partial_r(r u_r)\right] + \frac{1}{r^2} \partial_\theta^2 u_r 
		- \frac{2}{r^2} \partial_\theta u_\theta + \partial_z^2 u_r\right)
		- \Bo \sin(\theta)
		&
		\text{in } \hat \Omega(t)
		\\
		\We \left(\partial_t u_\theta + u_r \partial_r u_\theta + \frac{1}{r} u_\theta \partial_\theta u_\theta 
		+ \frac{1}{r} u_r u_\theta 
        + u_z \partial_z u_\theta\right) \\
		\quad=
		- \We (\frac{1}{r} \partial_\theta \hat p) 
		+ \Ca
		\left(\partial_r\left[\frac{1}{r} \partial_r(r u_\theta)\right] + \frac{1}{r^2} \partial_\theta^2 u_\theta 
		+ \frac{2}{r^2} \partial_\theta u_r + \partial_z^2 u_\theta\right)
		- \Bo \cos(\theta)
		&
		\text{in } \hat\Omega(t)
		\\
        \We(\partial_t u_z + u_r \partial_r u_z + \tfrac{1}{r} u_\theta \partial_\theta u_z + u_z \partial_z u_z)
        \\
        \quad=-\We \partial_z \hat{p} + \Ca \left(\partial_r\left[\frac{1}{r} \partial_r(r u_z)\right] + \frac{1}{r^2} \partial_\theta^2 u_z  + \partial_z^2 u_z
        \right)
        &
		\text{in } \hat\Omega(t)
        \\
		{\partial_r (r u_r) + \partial_\theta u_\theta + r \partial_z u_z}
		=
		0
		&\text{in } \hat\Omega(t),
	\end{cases}
\end{equation}
where $\hat \Omega(t) \coloneq \set{(r,\theta,z)}{1- h(t,\theta,z),\ 0 \leq \theta < 2\pi,\, 0 < z<\ell}$. In the remainder of this section, we use $u_r, u_\theta$ and $u_z$ for the components of $u$ and write $\partial_t, \partial_r, \partial_\theta$ and $\partial_z$ for the respective partial derivatives.
It remains to rewrite the boundary conditions \eqref{eq:BC_dimless} at the cylinder wall $\partial B_1(0) \times \{0<z<\ell\}$ and at the free surface
\begin{equation*}
    \hat \Gamma(t) \coloneq \set{(1- h(t,\theta,z),\theta,z)}{0 \leq \theta < 2\pi,0<z<\ell}
\end{equation*}
in cylindrical coordinates. The transformed no-slip condition at the cylinder wall reads
\begin{equation*}
    \begin{cases}
        u_r(1,\theta,z) = 0, & 0 \leq \theta < 2\pi, 0 < z<\ell
        \\
        u_\theta(1,\theta,z) = 1, & 0 \leq \theta < 2\pi, 0 < z<\ell \\
        u_z(1,\theta,z) = 0, & 0 \leq \theta < 2\pi, 0 < z<\ell,
    \end{cases}
\end{equation*}
while for the boundary conditions at the free surface, we first parametrise $\hat \Gamma(t)$ by
\begin{equation*}
    \mathbf{c}_{\hat \Gamma}(t,\cdot,\cdot)\colon [0,2\pi) \times (0,\ell) \longrightarrow \hat\Gamma(t),
    \quad
    \mathbf{c}_{\hat \Gamma}(t,\theta,z) 
    = 
    \bigl(1 -  h(t,\theta,z)\bigr) \textbf{e}_{r} + z\textbf{e}_{z}.
\end{equation*}
By means of $\mathbf{c}_{\hat \Gamma}$ we may express the unit tangent vectors at the surface $\hat \Gamma(t)$ as
\begin{equation*}
    \mathbf{\hat t}_1(t,\theta,z)
    =
    \frac{-  h_\theta \textbf{e}_r + \bigl(1 -  h\bigr) \textbf{e}_\theta}{\sqrt{(1 -  h)^2(1+ h_z^2)  +  h_\theta^2}}
    \quad \text{and}  \quad \mathbf{\hat t}_2(t,\theta,z)
    =
    \frac{-  h_z \textbf{e}_r +\textbf{e}_z}{\sqrt{(1 -  h)^2(1+ h_z^2)  +  h_\theta^2}}.
\end{equation*}
The outer pointing unit normal vector $\mathbf{\hat n}$ is given by
\begin{equation*}
    \mathbf{\hat n}(t,\theta,z)
    =
    \frac{-\bigl(1 -  h\bigr) \textbf{e}_r -  h_\theta \textbf{e}_\theta - (1- h) h_z}{\sqrt{(1 -  h)^2(1+ h_z^2)  + h_\theta^2}}.
\end{equation*}
The surface $\hat{\Gamma}(t)$ has dimensionless mean curvature $\hat{\kappa}$ given by
\begin{align*}
   \hat{\kappa} &= 
   \Big(h_\theta^2 + (1- h)^2(1+ h_z^2)\Big)^{-3/2}  \Big( (1- h)\left(h_{zz} h_\theta^2 + h_{\theta\theta} h_z^2\right)  -2(1- h)h_z h_\theta h_{\theta z}  +(1- h)^2\\
    &\qquad \qquad \qquad \quad \qquad \qquad \qquad+ \left(h_z^2(1- h)^2 + 2 h_\theta^2\right) +  (1- h)\left((1- h)^2 h_{zz} + h_{\theta \theta} \right)\Big).
\end{align*}
Finally, the normal velocity of the interface $\hat{\Gamma}$ and that of the fluid at the interface are given by
\begin{equation*}
    \hat V_{\mathbf{\hat n}} 
    = 
    \partial_t \mathbf{c}_{\hat\Gamma}\cdot \mathbf{\hat n}
    = 
    \frac{ (1 -  h) h_t}{\sqrt{(1 -  h)^2(1+ h_z^2) + (h_\theta)^2}}
    \quad \text{and} \quad
    \mathbf{\hat u}\cdot \mathbf{\hat n}
    =
    -\frac{(1- h) u_r +  h_\theta u_\theta + (1- h) h_z u_z}{\sqrt{(1 -  h)^2(1+ h_z^2) + h_\theta^2}},
\end{equation*}
respectively.
Therewith, the condition of the normal velocity $ \hat V_{\mathbf{\hat n}}$ of the free surface being equal to the normal velocity of the fluid at the interface $\hat \Gamma(t)$ reads
\begin{equation*}
     (1 -  h) h_t
    =
    -u_r (1- h) -  u_\theta h_\theta - (1- h) u_z h_z ,
    \quad
    r = 1- h,\, 0 \leq \theta < 2\pi,\, 0<z<\ell.
\end{equation*}
In order to reformulate the stress balance conditions (componentwise), we need the transformed stress tensor given by
\begin{equation*}
    \hat \sigma(\mathbf{\hat u},\hat p)
    =
    \begin{pmatrix}
        -\We \hat p + 2 \Ca \partial_r u_r
        &
        \Ca
        \left(\tfrac{1}{r} \partial_\theta u_r + \partial_r u_\theta - \tfrac{1}{r} u_\theta\right) & \Ca(\partial_z u_r + \partial_r u_z)
        \\
        \Ca
        \left(\tfrac{1}{r} \partial_\theta u_r + \partial_r u_\theta - \tfrac{1}{r} u_\theta\right)
        &
        -\We \hat p + \frac{2}{r} \Ca 
        \left(\partial_\theta u_\theta + u_r\right) & \Ca\left(\partial_z u_\theta + \tfrac{1}{r} \partial_\theta u_z \right) \\
        \Ca\left(\partial_r u_z + \partial_z u_r \right) & \Ca \left(\tfrac{1}{r}\partial_\theta u_z + \partial_z u_\theta \right) & - \We \hat p + 2 \Ca \partial_z u_z 
    \end{pmatrix}.
\end{equation*}
The tangential stress balance conditions $\mathbf{\hat t}_1\cdot \hat\sigma(\mathbf{\hat u},\hat p) \mathbf{\hat n} = 0$ and $\mathbf{\hat t}_2\cdot \hat\sigma(\mathbf{\hat u},\hat p) \mathbf{\hat n} = 0$ as well as the normal stress balance condition $\mathbf{\hat n}\cdot \hat\sigma(\mathbf{\hat u},\hat p) \mathbf{\hat n} =  \hat \kappa$ on $\hat \Gamma(t)$ may then be rewritten (componentwise) as
\begin{equation*}
    \begin{cases}
        2 (1- h) h_\theta 
        \left(\partial_r u_r - \frac{1}{1- h} \partial_\theta u_\theta - \frac{1}{1- h} u_r\right)
        -
        \bigl((1- h)^2 -  h_\theta^2\bigr)
        \left[\frac{1}{1- h} \partial_\theta u_r + \partial_r u_\theta - \frac{1}{1- h} u_\theta\right] \\
        \quad + (1- h) h_z \bigl( h_\theta (\partial_z u_r + \partial_r u_z) - (1- h)(\partial_z u_\theta + \tfrac{1}{1- h}\partial_\theta u_z) \bigr) 
        =
        0
        &\quad \text{on } \hat \Gamma(t)
        \\
          2 (1- h) h_z
        \left(\partial_r u_r - \partial_z u_z \right)
        -
        \bigl((1- h)(1- h_z^2)\bigr)
        \left[ \partial_z u_r + \partial_r u_z\right] &\\
        \quad+  h_\theta \bigl[\tfrac{1}{1- h} h_z(\partial_\theta u_r - u_\theta) + h_z\partial_r u_\theta - \left(\tfrac{1}{1- h}\partial_\theta u_z + \partial_z u_\theta\right)\bigr]
        =
        0
        &\quad \text{on } \hat \Gamma(t)
        \\
        2 \Ca \Bigg[ (1- h)^2 \left( \partial_r u_r\right) +(1- h) h_\theta
        \left(\frac{1}{1- h} \partial_\theta u_r + \partial_r u_\theta - \frac{1}{1- h} u_\theta\right) \\ \quad+  h_\theta^2 \left( \frac{1}{1- h} \partial_\theta u_\theta + \frac{1}{1- h} u_r\right) + (1- h)^2  h_z (\partial_z u_r + \partial_r u_z) \\
        \quad+   h_\theta h_z (1- h)\bigl(\partial_z u_\theta + \tfrac{1}{1- h} \partial_\theta u_z \bigr) + (1- h)^2  h_z^2  \partial_z u_z) \Bigg]\\
        =
        \We \Big((1-h^2)+ h_\theta^2+(1-h^2)h^2_z) \Big)\hat p\\
        \quad+ \tfrac{(1- h)\left(h_{zz} h_\theta^2 + h_{\theta\theta} h_z^2\right)  -2(1- h)h_z h_\theta h_{\theta z}  +(1- h)^2 + \left(h_z^2(1- h)^2 + 2 h_\theta^2\right) +  (1- h)\left((1- h)^2 h_{zz} + h_{\theta \theta} \right)}{\sqrt{ h_\theta^2 + (1- h)^2(1+ h_z^2)}} &\quad \text{on } \hat \Gamma(t). 
    \end{cases}
\end{equation*}

\medskip

\noindent\textsc{\textbf{Thin-film scaling and lubrication approximation.}}
We now consider the small-aspect ratio limit $\eps \ll 1$, i.e. the situation of the film height being small compared to the cylinder radius. We assume $L$ and $R$ to be of the same order of magnitude and introduce the scaling
\begin{equation*}
\begin{cases}
    \eps = \frac{d}{R}, \quad r = 1 - \eps \xi,
    \quad \tilde h = \eps^{-1} h,\\
    \We = \eps^2, \quad \gamma \Ca = \eps^3, \quad \gamma \Bo = g\eps^{1+\alpha},\ 0<\alpha < 1,
    \\
    \tilde u_\xi(t,\xi,\theta,z)= -\eps^{-1} u_r(t,1-\eps\xi,\theta,z),
    \quad  
    \tilde u_\theta{(t,\xi,\theta,z)} = 1 -  u_\theta(t,1-\eps\xi,\theta,z), 
    \\
    \tilde u_z{(t,\xi,\theta,z)} =  \tilde u_z(t,1-\eps\xi,\theta,z),
\end{cases}
\end{equation*}
in $\tilde \Omega(t) \coloneq \set{(\xi,\theta,z)}{0 \leq \xi \leq \tilde h(t,\theta,z),\, 0\leq \theta < 2\pi,0<z<\ell}$. That is,  we assume that the radial velocity of the fluid scales like the film height. For the angular velocity we only consider the deviation to the velocity at the cylinder wall. Furthermore, the transversal velocity is not affected by the scaling. Here, $\gamma$ and $g$ are constants of order one that appear later in the closed evolution equation for the film height in the role of surface tension and gravity.
Moreover, we consider only the case of intermediate angular velocities to avoid Taylor instabilities.
In order to obtain non-trivial dynamics we further introduce a suitable scaling for various parameters of the equations.
Indeed, we define the function
\begin{equation*}
    \tilde p\left(t,\xi,\theta,z)\coloneq \eps  (\hat p(t,1-\eps\xi,\theta,z)+\tfrac{1}{\We}\right)
\end{equation*}
in $\tilde \Omega(t)$.
Inserting these expressions into the Navier--Stokes system \eqref{eq:NS_polar}, we obtain
\begin{equation} \label{eq:NS_after_scaling}
    \begin{cases}
        -\eps\partial_t\tilde u_\xi+\eps\tilde u_\xi\partial_\xi\tilde u_\xi-\frac{\eps}{1-\eps\xi}\tilde u_\theta\partial_\theta\tilde u_\xi - \tfrac{\eps
}{1-\eps \xi}  \partial_\theta \tilde u_\xi-\frac{1}{1-\eps\xi} ( \tilde u_\theta + 1)^2
        \\=    
        \frac{1}{\eps^2}\partial_\xi\tilde p+\frac{\Ca}{\We}\Big(-\eps^{-1}\partial_\xi^2\tilde u_\xi+\frac{1}{1-\eps\xi}\partial_\xi\tilde u_\xi+\frac{\eps}{(1-\eps\xi)^2}\tilde u_\xi-\frac{\eps}{(1-\eps\xi)^2}\partial_\theta^2\tilde u_\xi\\
        \quad-\frac{2}{(1-\eps\xi)^2}\partial_\theta\tilde u_\theta\Big)-\frac{g\eps^{-1+\alpha}}{\gamma}\sin(\theta)  
        & \quad \text{in } \tilde \Omega(t) 
        \\
         \partial_t\tilde u_\theta+ \tilde u_\xi\partial_\xi\tilde u_\theta+\frac{1}{1-\eps\xi}\tilde u_\theta\partial_\theta\tilde u_\theta-\frac{\eps}{1-\eps\xi}\tilde u_\xi\tilde u_\theta+\frac{1}{1-\eps \xi}\partial_\theta  \tilde u_\theta - \frac{\eps}{1-\eps \xi} \tilde u_\xi +  \tilde u_z \partial_z \tilde u_\theta\\
        = 
        -\frac{1}{\eps(1-\eps\xi)}\partial_\theta\tilde p+\frac{\Ca}{\We}\Big(\frac{1}{\eps^2}\partial_\xi^2\tilde u_\theta-\frac{1}{\eps(1-\eps\xi)}\partial_\xi\tilde u_\theta-\frac{1}{(1-\eps\xi)^2}(\tilde u_\theta + 1)\\
        \quad+\frac{1}{(1-\eps\xi)}\partial_\theta^2\tilde u_\theta-\frac{2\eps}{(1-\eps\xi)^2}\partial_\theta\tilde u_\xi\Big)-\frac{g\eps^{-1+\alpha}}{\gamma}\cos(\theta) 
        &\quad \text{in } \tilde \Omega(t) 
        \\
        \partial_t\tilde u_z+\tilde u_\xi \partial_\xi\tilde u_z+\frac{1}{1-\eps\xi}\tilde u_\theta\partial_\theta\tilde u_z + \tfrac{1}{1-\eps \xi}  \partial_\theta \tilde u_z +  \tilde u_z \partial_z \tilde u_z \\
        =  
        -\frac{1}{\eps}\partial_z \tilde p+\frac{\Ca}{\We}\left(\eps^{-2}\partial_\xi^2\tilde u_z-\frac{1}{\eps(1-\eps\xi)}\partial_\xi\tilde u_z+ \partial_\theta ^2\tilde u_z +\partial^2_z \tilde u_z\right) 
        &\quad \text{in } \tilde \Omega(t) 
        \\
        -\eps\tilde u_\xi+(1-\eps\xi)\partial_\xi\tilde u_\xi+\partial_\theta\tilde u_\theta + \partial_z \tilde u_z=0 
        &\quad \text{in } \tilde \Omega(t). 
    \end{cases}
\end{equation}
The no-slip conditions at the cylinder, i. e. at $\{\xi=0\}$, transform to
\begin{equation*}
    \begin{cases}
        \tilde u_\xi = 0,
        \quad 
        t > 0,\, 0 \leq \theta < 2\pi, 0<z<\ell
        \\
        \tilde u_\theta = 0,
        \quad 
        t > 0,\, 0 \leq \theta < 2\pi,0<z<\ell\\
        \tilde u_z = 0,
        \quad 
        t > 0,\, 0 \leq \theta < 2\pi,0<z<\ell.
    \end{cases}
\end{equation*}
The conditions on the surface $\tilde\Gamma(t) = \set{(h(t,\theta,z),\theta)}{0 \leq \theta < 2\pi,0<z<\ell}$ can be written as
\begin{equation}\label{eq:bc_tf_scaled}
    \begin{cases}
       (1-\eps \tilde h) \eps \tilde h_{\tilde t} - (1-\eps \tilde h) \eps \tilde h_{\tilde \theta} = (1-\eps \tilde h) \eps \tilde u_\xi -\eps \tilde h_{\tilde \theta}(1+  \tilde u_{\tilde \theta}) -(1-\eps \tilde h) \eps \tilde u_z \tilde h_z
        &\quad \text{on } \tilde \Gamma(t)
        \\
        2\eps(1-\eps \tilde{h}) \tilde{h}_\theta \left( \partial_\xi\tilde u_\xi-\frac{1 }{1-\eps \tilde{h}}\partial_\theta\tilde u_\theta +\frac{\eps}{1-\eps \tilde{h}}\tilde u_\xi\right)
        \\
        \quad+\big(\eps^2 \tilde{h}_\theta^2-(1-\eps \tilde{h})^2\big)\left(-\eps^{-1}
        \partial_\xi\tilde u_\theta-\frac{\eps}{1-\eps \tilde{h}}\partial_\theta\tilde u_\xi-\frac{1}{1-\eps \tilde{h}}(\tilde u_\theta+1)\right) \\
        \quad+ (1-\eps \tilde{h}) \eps^2 \tilde h_z \tilde h_{\tilde \theta} (\eps \partial_z \tilde u_\xi - \eps^{-1} \partial_\xi \tilde u_z)  - (1-\eps \tilde h)(\partial_z \tilde u_{\tilde \theta} + \frac{1}{1-\eps \xi} \partial_\theta \tilde u_z) = 0
        &\quad\text{on } \tilde\Gamma(t)
        \\
        2(1-\eps \tilde{h})\eps \tilde{h}_{\tilde{z}} (-\partial_\xi \tilde{u}_\xi -  \partial_{\tilde{z}} \tilde{u}_{\tilde{z}}) -(1-\eps \tilde{h})(1-\eps^2 \tilde{h}_{\tilde{z}}^2) [\eps \partial_{\tilde{z}} \tilde{u}_\xi - \eps^{-1} \partial_\xi \tilde{u}_{\tilde{z}}]\\
         \quad+ \eps \tilde{h}_\theta \left[\tfrac{\eps}{1-\eps \tilde{h}}\tilde{h}_{\tilde{z}} (\eps \partial_\theta \tilde{u}_\xi - 1 +  \tilde{u}_\theta)  + \tilde h_{\tilde{z}}\partial_\xi \tilde{u}_\theta  - \left(\tfrac{1}{1-\eps \tilde{h}}\partial_\theta \tilde{u}_{\tilde{z}} - \partial_{\tilde{z}} \tilde{u}_\theta\right)
        \right] = 0 
        &\quad\text{on } \tilde\Gamma(t)
        \\
         2  \Bigg[ (1-\eps \tilde{h})^2 \left( -\partial_\xi \tilde{u}_\xi\right) +(1- \eps \tilde{h}) \eps \tilde{h}_\theta
        \left(\frac{\eps}{1- \eps \tilde{h}} \partial_\theta \tilde{u}_\xi + \eps^{-1}\partial_\xi \tilde{u}_\theta - \frac{1}{1- \eps \tilde{h}} (1- \tilde{u}_\theta\right) \\ \quad+  \eps^2 \tilde{h}_\theta^2 \left(- \frac{1}{1- \eps \tilde{h}} \partial_\theta \tilde{u}_\theta + \frac{\eps}{1- \eps \tilde{h}} \tilde{u}_\xi\right) + (1-\eps \tilde{h})^2 \eps \tilde{h}_{\tilde{z}} (\eps \partial_{\tilde{z}} \tilde{u}_\xi -\eps^{-1} \partial_\xi \tilde{u}_{\tilde{z}}) \\
        \quad+   \eps^2 \tilde{h}_\theta \tilde{h}_{\tilde{z}} (1- \eps \tilde{h})\bigl(- \partial_{\tilde{z}} \tilde{u}_\theta + \tfrac{1}{1- \eps \tilde{h}} \partial_\theta \tilde{u}_{\tilde{z}} \bigr) + (1- \eps \tilde{h})^2  \eps^2 \tilde{h}_{\tilde{z}}^2  \partial_{\tilde{z}} \tilde{u}_{\tilde{z}}) \Bigg]\\
        =
         \Big((1-\eps \tilde{h})^2+ \eps^2 \tilde{h}_\theta^2+(1-\eps \tilde{h})^2 \eps^2 \tilde{h}^2_{\tilde{z}}) \Big)\hat p \sqrt{\eps^2 \tilde{h}_\theta^2 + (1- \eps \tilde{h})^2(1+ \eps^2  \tilde{h}_{\tilde{z}}^2)}\\
         \quad+ 
         \Bigl[(1-\eps  \tilde{h})\left(\eps^3 \tilde{h}_{\tilde{z}\tilde{z}} \tilde{h}_\theta^2 +\eps^3 \tilde{h}_{\theta\theta} \tilde{h}_{\tilde{z}}^2\right)  -2(1-\eps \tilde{h})\eps^3 \tilde{h}_{\tilde{z}} \tilde{h}_\theta \tilde{h}_{\theta {\tilde{z}}}  +(1- \eps \tilde{h})^2 \Bigr.\\
        \Bigl.\quad+ \left(\eps^2 \tilde{h}_{\tilde{z}}^2(1-\eps \tilde{h})^2 + 2\eps^2 \tilde{h}_\theta^2\right) +  (1- \eps \tilde{h})\left((1- \eps \tilde{h})^2 \eps \tilde{h}_{\tilde{z}\tilde{z}} + \eps \tilde{h}_{\theta \theta} \right)\Bigr] &\quad \text{on } \hat \Gamma(t). 
    \end{cases}
\end{equation}

\medskip

\noindent\textsc{\textbf{Derivation of the thin-film equation. }}
We aim to derive a closed evolution equation for the film height $\tilde h$ in the case of the height of the liquid film being rather small compared to the cylinder radius. Thus, we multiply the first equation in \eqref{eq:NS_after_scaling} by $\frac{\eps^2}{\We}$ and keep only terms of order one. Similarly, we multiply the second equation in \eqref{eq:NS_after_scaling} by $\frac{\eps}{\We}$ and keep all terms up to order $\eps^\alpha$. The third equation is again multiplied by $\frac{\eps}{\We}$ and all terms up to order one are kept. Accordingly, the Navier--Stokes equations reduce to the system
\begin{equation*}
\begin{cases}
     \tilde p_\xi = \mathcal{O}(\eps^{1+\alpha})
    &\quad \text{in } \tilde \Omega(t)  
    \\
    \partial_\xi^2\tilde u_\theta = \gamma \tilde p_\theta + \eps^\alpha\tilde g\cos(\theta) + \mathcal{O}(\eps)
    &\quad \text{in } \tilde \Omega(t) 
    \\
     \partial_\xi^2\tilde u_z = \gamma \tilde p_z  + \mathcal{O}(\eps)
    &\quad \text{in } \tilde \Omega(t) 
    \\
    \partial_\xi\tilde u_\xi+\partial_\theta\tilde u_\theta + \partial_z \tilde u_z = \mathcal{O}(\eps)
    &\quad \text{in } \tilde \Omega(t) .
\end{cases}
\end{equation*}
To obtain the reduced boundary conditions, we multiply the kinematic boundary condition $\eqref{eq:bc_tf_scaled}_1$ by $\frac{1}{\eps}$ and the tangential stress balance conditions $\eqref{eq:bc_tf_scaled}_2$ and $\eqref{eq:bc_tf_scaled}_3$ by $\eps$. Moreover, for the normal stress balance condition $\eqref{eq:bc_tf_scaled}_4$, we use the Taylor expansion 
\begin{equation*}
    \sqrt{(1-\eps h)^2(1+\eps^2 \tilde h_z^2) + \eps^2 h_\theta^2}
    =
    1- \eps h + \Ocal(\eps^2).
\end{equation*}
Then, keeping only terms up to order $\eps^\alpha$, we end up with
\begin{equation}
    \begin{cases}
        \tilde u_\xi = 0
        &\quad \text{on } \{\xi=0\}
        \\
        \tilde u_\theta = 0
        &\quad \text{on } \{\xi=0\}
        \\
        \tilde u_z = 0
        &\quad \text{on } \{\xi=0\}
        \\
        \tilde h_{\tilde t} + \tilde h_\theta= \tilde u_\xi-\tilde u_\theta \tilde h_\theta - \tilde u_z \tilde h_z + \Ocal(\eps)
        &\quad \text{on } \tilde\Gamma(t)
        \\
       \gamma  \tilde p_\xi = \Ocal(\eps) &\quad \text{in } \tilde \Omega(\tilde t)
        \\
        \partial^2_\xi \tilde u_{\tilde \theta} =  \gamma \tilde p_{\tilde \theta} + \eps^{\alpha} g \cos(\tilde \theta) +\Ocal(\eps)&\quad \text{in } \tilde \Omega(\tilde t)
        \\
        \partial^2_\xi \tilde u_{z} =  \gamma \tilde p_{z} +\Ocal(\eps)&\quad \text{in } \tilde \Omega(\tilde t)
        \\
        \partial_\xi \tilde u_\xi + \partial_\theta \tilde u_{\tilde \theta } + \partial_z \tilde u_z = \Ocal(\eps)&\quad \text{in } \tilde \Omega(\tilde t)
        \\
        \partial_\xi \tilde u_{\tilde \theta} = \Ocal(\eps)  &\quad \text{on } \tilde\Gamma(t)
        \\
        \partial_\xi \tilde u_z = \Ocal(\eps) &\quad \text{on } \tilde\Gamma(t)
        \\
        -\tilde p = \tilde h+ \tilde h_{\theta\theta}+\tilde h_{zz} + \Ocal(\eps)
        &\quad \text{on } \tilde\Gamma(t).
    \end{cases}
\end{equation} 
As usual, by integrating this system, we can derive the closed evolution equation 
\begin{equation*}\label{eq:derived-thin-film-equation}
     \tilde h_{ t} +  \tilde h_\theta + \frac{\gamma}{3} \div\big( \tilde h^3 \nabla (\Delta  \tilde h +  \tilde h)\big) - \frac{g}{3}\eps^{\alpha} \big({\tilde h}^3\cos( \theta)\big)_{ \theta} = 0,
    \quad
    t > 0,\, 0 \leq \theta < 2\pi, 0<z<\ell,
\end{equation*}
for the height $\tilde h$ of the fluid film covering the inner cylinder wall.
Absorbing the factor $\frac{1}{3}$ into the constants, we obtain
\begin{equation}\label{eq:thin-film-equation}
     \tilde h_{ t} +  \tilde h_\theta + \gamma\, \div\big( \tilde h^3 \nabla (\Delta  \tilde h +  \tilde h)\big) - \delta \big(\tilde h^3 \cos( \theta)\big)_{ \theta} = 0,
    \quad
    t > 0,\, 0 \leq \theta < 2\pi, 0<z<\ell,
\end{equation}
with $\delta \coloneq \frac{g}{3} \eps^\alpha$.

For equation \eqref{eq:thin-film-equation} to be well-posed we need to impose boundary conditions for $z\in\{0,\ell\}$. We impose a zero-contact angle condition, meaning that the free surface at the contact points is perpendicular to the cylinder covers:
\begin{equation}\label{eq:bound-zero-angle}
    \tilde h_z = 0, \quad t>0,\ 0\leq \theta < 2\pi,\ z\in\{0,\ell\}.
\end{equation}
Moreover, we assume that there is no flux through the cylinder covers, i. e.
\begin{equation*}
    \tilde h^3 \left(\Delta \tilde h + \tilde h\right)_z = 0, \quad t>0,\ 0\leq \theta < 2\pi,\ z\in\{0,\ell\}.
\end{equation*}
This condition also guarantees naturally that the fluid mass inside the cylinder is conserved in time.
Since we only consider positive solutions and using the zero-contact angle condition \eqref{eq:bound-zero-angle}, the no-flux condition may be rewritten as
\begin{equation*}
    0 = \Delta \tilde h_z + \tilde h_z = \tilde h_{zzz}, \quad t>0,\ 0\leq \theta < 2\pi,\ z\in\{0,\ell\},
\end{equation*}
such that the boundary conditions at the lateral covers reduce to the Neumann-type conditions
\begin{equation*}
    \tilde h_z = \tilde h_{zzz} = 0, \quad t>0, 0\leq \theta < 2\pi, z\in\{0,\ell\}.
\end{equation*}
We point out that the zero-contact angle condition and the no-flux condition are also classically imposed in the usual flat thin-film case (cf. for instance \cite{BF1990,dalpasso_giacomelli_shishkov_2001}) both in 1D and 2D equations. However, we mention that these boundary conditions are somehow artificial. In order to take into account no-slip boundary conditions at the rear covers, a careful boundary layer analysis would have to be carried out.


\section{Notation and functional setting}\label{sec:notation_basics}

Much of the following analysis relies on Fourier methods. For this reason we would like to work with periodic boundary conditions instead of the Neumann-type conditions imposed above. This can indeed be done without loss of generality by means of the following reflection principle.

\medskip

\noindent \textsc{Reflection. } We define a reflected version $v \colon \Tbb^2 \to \R$ of a function $\tilde v\colon \Tbb \times (0,\ell) \to \R$ by
\begin{equation}
    v(\theta,\zeta) \coloneqq
    \begin{cases}
        \tilde v(\theta,\frac{\ell}{\pi} \zeta), & 0 \leq \zeta < \pi
        \\
        \tilde v(\theta, 2\ell-\frac{\ell}{\pi} \zeta), &
        \pi \leq \zeta < 2\pi
    \end{cases} \label{eq:reflectedfunction}
\end{equation}
See Figure \ref{fig:reflectedfunction} for a visualisation of this definition.

\begin{figure}[h!]
\centering
\begin{tikzpicture}

  \begin{axis}[
    domain=0:2,
    samples=60,
    axis x line=bottom,
    axis y line=right,
    xlabel={$z$},
    xtick={0,1,2},
    xticklabels={$0$,$\ell$,$2\ell$},
    ytick=false,
    yticklabels={},
    xmin=0, xmax=2,
    ymin=0, ymax=3,
    width=12cm,
    height=5cm
  ]
    \addplot[thick, luh-dark-blue, domain=0:1]
      {-467.9402*x^9 + 1893.5004*x^8 - 2771.9731*x^7 + 1560.8559*x^6 + 1.7132*x^5 - 236.0593*x^4 + 18.9031*x^2 + 2} node[right, pos=0.4] {$\tilde v (\theta_0,z)$};
  \end{axis}
  \begin{axis}[
    domain=0:2,
    samples=60,
    axis y line=left,
    axis x line=top,
    xlabel={$\zeta$},
    xtick={0,1,2},
    xticklabels={$0$,$\pi$,$2\pi$},
    ytick=false,
    yticklabels={},
    xmin=0, xmax=2,
    ymin=0, ymax=3,
    width=12cm,
    height=5cm,
  ]
    \draw[dashed] (1, 0) -- (1, 3);
    \addplot[thick, luh-light-blue, domain=1:2]
      {-467.9402*(2-x)^9 + 1893.5004*(2-x)^8 - 2771.9731*(2-x)^7 + 1560.8559*(2-x)^6 + 1.7132*(2-x)^5 - 236.0593*(2-x)^4 + 18.9031*(2-x)^2 + 2} node[right, pos=0.4] {$v(\theta_0,\zeta)$};
  \end{axis}
\end{tikzpicture}
\caption{A function $\tilde v(\theta_0,z)$ (for fixed $\theta_0\in\mathbb T$) with $z\in (0,\ell)$ and its reflected continuation $v(\theta_0,\zeta)$ rescaled to $\zeta\in (0,2\pi)$. The function $v$ can then be periodically extended to $\mathbb T^2$.}
\label{fig:reflectedfunction}
\end{figure}

Moreover, we introduce the differential operators
\begin{equation*}
    \nabla_{\ell} v(\theta,\zeta)
    \coloneqq 
    \begin{pmatrix}
        v_\theta
        \\
        \frac{\pi}{\ell} v_{\zeta}
    \end{pmatrix},
    \quad 
    \div_{\ell} 
    \begin{pmatrix}
        v_1
        \\
        v_2
    \end{pmatrix}
    \coloneq
    v_{1,\theta} + \frac{\pi}{\ell} v_{2,\zeta},
    \quad \text{and} \quad
    \Delta_{\ell} v
    \coloneqq
    v_{\theta\theta} + \frac{\pi^2}{\ell^2} v_{\zeta\zeta}.
\end{equation*}
We can then write a reflected function $v\in L_2(\Tbb^2)$ in terms of its Fourier series
\begin{equation*}
    v(\theta,\zeta) = \sum_{k,l\in\Zbb} v_{k,l}\, e^{\ii k\theta} e^{\ii l\zeta},
\end{equation*}
where the Fourier coefficients $v_{k,l}$ are defined by
\begin{equation*}
    v_{k,l} = \frac{1}{4\pi^2}\int_{\Tbb^2} v(\theta,\zeta)\, e^{-\ii k\theta}\, e^{-\ii l\zeta}\dd\theta\dd\zeta.
\end{equation*}
Under this reflection principle the thin-film equation \eqref{eq:thin-film-equation} accordingly transforms to
\begin{equation}\label{eq:transformed-thin-film-equation}
    h_t + h_\theta + \gamma\, \div_\ell \left(h^3 \nabla_\ell(\Delta_\ell h + h)\right) = \delta \left(h^3 \cos(\theta)\right)_\theta,
    \quad\theta,\zeta\in\mathbb T
\end{equation}
with periodic boundary conditions now in $\theta$ and $\zeta$. Since $h$ is real-valued its Fourier coefficients must satisfy $h_{-k,-l} = \bar{h}_{k,l}$ and since solutions $h$ to \eqref{eq:transformed-thin-film-equation} are formally constructed from solutions $\tilde h$ of the original equation \eqref{eq:thin-film-equation} by the above reflection principle, the Fourier coefficients must also obey the symmetry condition $h_{k,l} = h_{k,-l}$, allowing effectively only functions even in the coordinate $\zeta$.

\bigskip
\noindent\textsc{Function Spaces. } The reflection principle allows us to define (fractional) Sobolev norms in terms of a function's Fourier representation:
We denote by $\Lev{p}(\T^2;\Cbb)$ the subspace of $L_p(\T^2;\Cbb)$ of functions $v$ that are even in $\zeta$, that is, their Fourier coefficients satisfy the symmetry condition $v_{k,l} = v_{k,-l}$.
For $s\geq 0$ we then define the fractional Sobolev space of functions that are even in $\zeta$ by
\begin{equation*}
    \Hev{s}(\Tbb^2;\Cbb) \coloneq \setB{v\in \Lev{2}(\Tbb^2;\Cbb)}{\sum_{k,l\in\Zbb} \big(1+k^2+l^2\big)^s \abs{v_{k,l}}^2 < \infty}
\end{equation*}
with the natural norm
\begin{equation*}
    \norm{v}_{H^s} \coloneq \Big(\sum_{k,l\in\Zbb} \big(1+k^2+l^2\big)^s \abs{v_{k,l}}^2\Big)^{1/2}.
\end{equation*}
We identify the corresponding real-valued spaces $\Lev{2}(\Tbb^2)$ and $\Hev{s}(\Tbb^2)$ with closed subspaces of $\Lev{2}(\T^2;\Cbb)$ and $\Hev{s}(\Tbb^2;\Cbb)$ respectively, where in addition to the symmetry condition $v_{k,l} = v_{k,-l}$ we have $v_{k,l} = \bar v_{-k.-l}$.

Analogously, we define the corresponding homogeneous spaces with zero mean:
\begin{equation*}
\begin{aligned}
    \Hevd{s}(\T^2;\Cbb) &\coloneq \setb{v\in\Hev{s}(\T^2;\Cbb)}{v_{0,0} = 0},\\
    \Levd{p}(\T^2;\Cbb) &\coloneq \setb{v\in\Lev{p}(\T^2;\Cbb)}{v_{0,0} = 0}
\end{aligned}
\end{equation*}
with norms
\begin{equation*}
    \norm{v}_{\dot H^s} \coloneq \Big(\sum_{k,l\in\Zbb} \big(k^2+l^2\big)^s \abs{v_{k,l}}^2\Big)^{1/2}
\end{equation*}
and the usual $L_p$-norm respectively.

The even fractional Sobolev spaces defined above form an interpolation scale:
\begin{lemma}[cf. \cite{lions_magenes_1972, chandler-wilde_interpolation_2015}]
For all $s_1,s_2\geq 0$ it holds for the real interpolation space
\begin{equation*}
    \left(\Hev{s_1}(\Tbb^2), \Hev{s_2}(\Tbb^2)\right)_{\alpha, 2} = \Hev{s}(\Tbb^2),
\end{equation*}
where $0<\alpha<1$, $s = s_2\alpha+s_1(1-\alpha)$ and with equivalent norms.
\end{lemma}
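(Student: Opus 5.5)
The plan is to reduce the statement to the interpolation of weighted $\ell_2$-sequence spaces, using the Fourier characterisation of the norms. Consider the map $\mathcal F\colon v\mapsto (v_{k,l})_{k,l\in\Zbb}$. By definition it is an isometric isomorphism from $\Hev{s}(\Tbb^2)$ onto the closed subspace
\[
Y_s \coloneq \setb{c\in \ell_2^{w_s}(\Zbb^2)}{c_{k,l}=c_{k,-l},\ c_{-k,-l}=\bar c_{k,l}}
\]
of the weighted space $\ell_2^{w_s}(\Zbb^2)$ with weight $w_s(k,l)=(1+k^2+l^2)^s$. Since real interpolation commutes with isometric isomorphisms, it suffices to show two things:
\[
(Y_{s_1},Y_{s_2})_{\alpha,2}=Y_s \quad\text{with equivalent norms}.
\]

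\textbf{Step 1: the full weighted spaces.} First I recall the classical Stein--Weiss type result for weighted $\ell_2$ spaces (cf. \cite{lions_magenes_1972, chandler-wilde_interpolation_2015}):
\[
(\ell_2^{w_{s_1}},\ell_2^{w_{s_2}})_{\alpha,2}=\ell_2^{w_{s}}.
\]
If one wants to see it directly, the $K$-functional splits coordinatewise, $K(t,c)^2\simeq\sum_{k,l} \min(w_{s_1},t^2w_{s_2})\,\abs{c_{k,l}}^2$. Integrating $t^{-2\alpha}K(t,c)^2\,\frac{dt}{t}$ coordinatewise then gives, up to a constant depending only on $\alpha$, the weight $w_{s_1}^{1-\alpha}w_{s_2}^{\alpha}=w_s$. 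The case $s_1=s_2$ is trivial.

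\textbf{Step 2: passage to the symmetric subspace.} This is the only point needing care, since interpolation of subspaces generally differs from the subspace of the interpolated space. Here $Y_s$ is the range of a bounded projection that acts simultaneously on all the weighted spaces. Define
\[
(Pc)_{k,l}=\tfrac14\bigl(c_{k,l}+c_{k,-l}+\bar c_{-k,-l}+\bar c_{-k,l}\bigr).
\]
This map is real-linear. Since the weights are invariant under $(k,l)\mapsto(\pm k,\pm l)$, $P$ is a contraction on every $\ell_2^{w_s}$. It is idempotent and has range exactly $Y_s$.

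One subtlety: because $P$ is only real-linear, I will regard all spaces as real Banach spaces. This is consistent with the paper's identification of the real-valued spaces as real closed subspaces.

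\textbf{Step 3: retract argument.} The standard retract argument then closes the proof.
\begin{itemize}
\item The inclusion $Y_{s_j}\hookrightarrow\ell_2^{w_{s_j}}$ gives $(Y_{s_1},Y_{s_2})_{\alpha,2}\hookrightarrow Y_s$, using Step 1 and the fact that elements remain symmetric.
\item Conversely, $P$ maps the couple $(\ell_2^{w_{s_1}},\ell_2^{w_{s_2}})$ boundedly into $(Y_{s_1},Y_{s_2})$. Hence for $c\in Y_s$ we get $c=Pc\in(Y_{s_1},Y_{s_2})_{\alpha,2}$, with norm bounded by a constant times $\norm{c}_{\ell_2^{w_s}}$.
\end{itemize}
Transporting back via $\mathcal F^{-1}$ yields the claim.

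I expect the main (minor) obstacle to be verifying that $P$ is well defined and bounded uniformly across the scale. This rests on the symmetry of $w_s$ under reflections of the indices.
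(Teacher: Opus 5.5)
Your proof is correct. It shares the paper's core idea: the Fourier coefficient map turns the $\Hev{s}$-scale into a weighted $\ell_2$-scale, to which the Stein--Weiss rule $w_{s_1}^{1-\alpha}w_{s_2}^{\alpha}=w_s$ applies. Where you differ is in how the symmetry constraints are handled.

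The paper expands real, $\zeta$-even functions in the real orthogonal system $\cos(k\theta)\cos(l\zeta)$, $\sin(k\theta)\cos(l\zeta)$, indexed by $\Zbb\times\Nbb_0$. In these coordinates the coefficients are free, so $\Hev{s}(\Tbb^2)$ is isometric to the \emph{whole} weighted space $L_2(\Zbb\times\Nbb_0,w^{(s)})$. The orbit-counting factors $1,\tfrac12,\tfrac14$ are absorbed into the weights. As a result, the question of whether interpolating subspaces gives the subspace of the interpolated space never arises.

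You keep the complex coefficients on $\Zbb^2$, so $\Hev{s}$ becomes a constrained closed subspace $Y_s$. You therefore need the retract argument with the averaging operator $P$ to identify $(Y_{s_1},Y_{s_2})_{\alpha,2}$ with $Y_s$.

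What each route buys:
\begin{itemize}
\item The paper's route is shorter and never has to deal with a merely real-linear projection.
\item Your route avoids the normalisation bookkeeping in the weights.
\item It makes explicit why passing to the symmetric subspace commutes with interpolation, rather than sidestepping the issue by a choice of free coordinates.
\item It proves the weighted-$\ell_2$ interpolation through the $K$-functional instead of citing it.
\item It would carry over unchanged to any other symmetry condition given by a weight-preserving action on the index set.
\end{itemize}
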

Note that we would need some normalisation factor depending on $\alpha$ for the norms to be equal \cite{chandler-wilde_interpolation_2015}. We follow the same strategy as \cite{chandler-wilde_interpolation_2015} and reduce the interpolation to the case of weighted $L_2$-spaces, see also e. g. \cite{lions_magenes_1972, triebel1978}.

\begin{proof}
$\Hev{s_1}(\Tbb^2)$ and $\Hev{s_2}(\Tbb^2)$ are (real) Hilbert spaces and functions in either one can be written as
\begin{equation*}
\begin{aligned}
    v(\theta,\zeta) =&\ 4\sum_{k,l>0} a_{k,l} \cos(k\theta)\cos(l\zeta) - 4\sum_{k,l>0} b_{k,l} \sin(k\theta)\cos(l\zeta)\\
    &+ 2\sum_{k>0} a_{k,0} \cos(k\theta) - 2\sum_{k>0} b_{k,0}\sin(k\theta) + 2\sum_{l>0} a_{0,l} \cos(l\zeta) + a_{0,0}\\
    =& \sum_{k\in\Zbb}\sum_{l\in\Nbb_0} c_{k,l}\,\phi_{k,l}(\theta,\zeta)
\end{aligned}
\end{equation*}
where $v_{k,l} = a_{k,l} + \ii b_{k,l}$. Furthermore, $c_{k,l}$ and $\phi_{k,l}$ are appropriately chosen real coefficients and real-valued basis functions. This defines a linear map $\Phi:L_{2,\mathrm{ev}}(\Tbb^2)\to \{c:\Zbb\times\Nbb_0\to\Rbb\}$ between the even $L_2$ functions and the coefficient space. In fact, it turns out that $\Phi$ is an orthogonal isomorphism between $\Hev{s_i}(\Tbb^2)$ and $L_2(\Zbb\times\Nbb_0, w^{(i)})$ for $i=1,2$ where we denote by $L_2(\Zbb\times\Nbb_0, w)$ the weighted $L_2$-space equipped with the counting measure and weight $w$.

The weights $w^{(i)}$ are hereby defined as follows:
\begin{equation*}
    w^{(i)}_{k,l} =
    \begin{cases}
        1 \quad & k=l=0,\\
        \frac{\big(1+k^2\big)^{s_i}}{2} \quad & k\neq 0, l=0,\\
        \frac{\big(1+l^2\big)^{s_i}}{2} \quad & k=0, l\neq 0,\\
        \frac{\big(1+k^2+l^2\big)^{s_i}}{4} \quad & k\neq 0, l\neq 0.
    \end{cases}
\end{equation*}
We can interpolate between these weighted $L_2$-spaces by adjusting the weights according to the rule $w^{(\alpha)} = \big(w^{(1)}\big)^{1-\alpha} \big(w^{(2)}\big)^{\alpha}$, see e. g. \cite{chandler-wilde_interpolation_2015, triebel1978, bergh_lofstrom_1976}. Pulling these newly weighted $L_2$-spaces back along $\Phi$, we conclude the desired result.
\end{proof}


\section{Local well-posedness for \texorpdfstring{$\delta \geq 0$}{delta greater than zero}} \label{sec:well-posedness_and_stat_sol}

In this section we prove local existence and uniqueness of positive strong solutions to the reflected rimming-flow problem (see Section \ref{sec:notation_basics}).

That is, we look for a function
\begin{equation*}
    h\in C\big([0,T); \Lev{2}(\Tbb^2)\big) \cap C^1\big((0,T);\Lev{2}(\Tbb^2)\big)
\end{equation*}
with $h(t)\in \Hev{4}(\Tbb^2)$ for all $0<t<T$ satisfying
\begin{equation}\label{eq:reform-PDE}
\left\{
\begin{aligned}
    &h_t + h_\theta + \gamma\, \div_\ell \left(h^3 \nabla_\ell(\Delta_\ell h + h)\right) - \delta \left(h^3 \cos(\theta)\right)_\theta = 0,
    \quad & t>0,\ \theta,\zeta\in\mathbb T,\\
    &h(0) = h_0, \quad & t=0,\ \theta,\zeta\in\mathbb T.
\end{aligned}
\right.
\end{equation}

For the remainder of the paper we are going to investigate this reformulated problem \eqref{eq:reform-PDE} instead of the original problem \eqref{eq:thin-film-equation}.


\begin{theorem}[Local well-posedness for positive initial values] \label{thm:local_existence}
Let $\delta \geq 0$ be fixed and let $3<\alpha<4$. Given a positive initial value $h_0 \in \Hev{\alpha}(\T^2)$ with $h_0(\theta,\zeta) > 0$ for all $\theta, \zeta \in \T$, problem \eqref{eq:reform-PDE} admits a unique maximal solution
\begin{equation*}
    h \in C\bigl([0,T);\Hev{\alpha}(\T^2)\bigr) 
    \cap
    C^1\bigl((0,T);\Lev{2}(\T^2)\bigr) 
    \cap
    C^{\alpha/4}\bigl([0,T);\Lev{2}(\T^2)\bigr),
\end{equation*}
satisfying $h(t) \in \Hev{4}(\T^2)$ for all $0 < t < T$. We call $T=T(h_0) \in (0,\infty]$ the maximal time of existence. Moreover, the maximal solution has the following properties
\begin{itemize}
    \item[(i)] (positivity) $h$ is positive, 
    \begin{equation*}
        h(t,\theta,\zeta) > 0,
        \quad
        0 \leq t < T, \quad \theta, \zeta \in \T^2;
    \end{equation*}
    \item[(ii)] (conservation of mass) $h$ conserves its mass in the sense that
    \begin{equation*}
        \|h(t)\|_{L_1(\T^2)}
        =
        \|h_0\|_{L_1(\T^2)},
        \quad 
        0 \leq t < T; \quad 
    \end{equation*}
    \item[(iii)] (maximal existence) At least one of the following statements is true:
    \begin{itemize}
        \item $T=\infty$ (global existence);
        \item $\lim_{t \nearrow T} \|h(t)\|_{H^\alpha} = \infty$ (blow up);
        \item $\lim_{t \nearrow T} \min_{\theta, \zeta \in \T} h(t,\theta,\zeta) = 0$ (film rupture).
    \end{itemize}
    \item[(iv)] (energy dissipation) For $\delta = 0$ (no gravity) $h$ dissipates the (non-negative) energy
    \begin{equation*}
        E[h] \coloneq \frac 12\int_{\T^2} |\nabla h|^2 - |h|^2 \dd\vartheta\dd\zeta + \frac{m^2}{2},
    \end{equation*}
    i.e. we have $\tfrac{\dd}{\dd t} E[h(t)] \leq 0$.
\end{itemize}
\end{theorem}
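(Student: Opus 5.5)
We will treat \eqref{eq:reform-PDE} as an abstract quasilinear Cauchy problem
\begin{equation*}
    h_t + A(h)h = F(h), \qquad h(0)=h_0,
\end{equation*}
posed in the ground space $E_0 \coloneq \Lev{2}(\T^2)$ with domain $E_1 \coloneq \Hev{4}(\T^2)$. The principal part is $A(v)w \coloneq \gamma\,\div_\ell\bigl(v^3\nabla_\ell\Delta_\ell w\bigr)$. All remaining terms are put into $F(v) \coloneq -v_\theta - \gamma\,\div_\ell\bigl(v^3\nabla_\ell v\bigr) + \delta\bigl(v^3\cos\theta\bigr)_\theta$, which is of order at most two.

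Fix $3<\beta<\alpha<4$ and work on the open set $V_\beta \coloneq \{v\in\Hev{\beta}(\T^2):\ v>0\}$. The Sobolev embedding $\Hev{\beta}\hookrightarrow C^1(\T^2)$ makes $V_\beta$ open. Positivity of $v$ and $v^3\in C^1$ give that $A(v)$ is a uniformly elliptic fourth-order operator with $C^1$ coefficients. Its lower-order part, the term $3\gamma v^2\nabla_\ell v\cdot\nabla_\ell\Delta_\ell w$, is a bounded perturbation from $\Hev{3+\varepsilon}$ to $L_2$. So $-A(v)$ generates an analytic semigroup on $E_0$ with domain $E_1$, by standard results for elliptic operators on the torus (Fourier multiplier plus perturbation).

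Two facts need checking here. First, the operators preserve the even-in-$\zeta$ subspace, because the equation is invariant under $\zeta\mapsto-\zeta$. Second, $v\mapsto A(v)$ is locally Lipschitz (indeed smooth) from $V_\beta$ into $\mathcal L(E_1,E_0)$, and $F$ is locally Lipschitz from $V_\beta$ into $E_0$. Both follow from $\Hev{\beta}$ being an algebra and the regularity of $v\mapsto v^3$.

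With these in hand, we invoke Amann's theorem for quasilinear parabolic problems, using the interpolation Lemma to identify $(E_0,E_1)_{\alpha/4,2}=\Hev{\alpha}$. This yields a unique maximal solution in
\begin{equation*}
    C\bigl([0,T);\Hev{\alpha}\bigr)\cap C^1\bigl((0,T);E_0\bigr)\cap C\bigl((0,T);E_1\bigr)\cap C^{\alpha/4}\bigl([0,T);E_0\bigr).
\end{equation*}
The properties then follow in turn.
\begin{itemize}
    \item[(i)] Positivity holds by construction: the solution is taken in $V_\beta$ as long as it exists.
    \item[(iii)] The maximal-existence alternative is the standard continuation criterion. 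Either $T=\infty$, or the orbit leaves every set that is bounded in $\Hev{\alpha}$ and has positive distance from $\partial V_\beta$. The latter means either $\|h\|_{H^\alpha}\to\infty$ or $\min h\to0$. Passing from $\liminf$ to $\lim$ is the usual subtle point. If needed, we would formulate it as "$\sup$ of norm is infinite or $\inf\min h=0$", which is what the abstract theorem gives directly.
    \item[(ii)] Mass conservation follows by integrating the equation over $\T^2$. Every term apart from $h_t$ is a divergence or a $\theta$-derivative of a periodic function, so $\frac{\dd}{\dd t}\int h=0$. Since $h>0$, this is the $L_1$ norm.
    \item[(iv)] For $\delta=0$, we test the equation with $-(\Delta_\ell h+h)$; this is legitimate since $h(t)\in\Hev4$ for $t>0$. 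This gives
    \begin{equation*}
        \frac{\dd}{\dd t}E[h] = \int h_t\bigl(-\Delta_\ell h-h\bigr) = -\int h_\theta\bigl(-\Delta_\ell h-h\bigr) - \gamma\int h^3\bigl|\nabla_\ell(\Delta_\ell h+h)\bigr|^2.
    \end{equation*}
    The transport contribution vanishes, since $\int h_\theta h = 0$ and $\int h_\theta\Delta_\ell h=0$ by periodicity. Hence $\frac{\dd}{\dd t}E\le0$.
\end{itemize}

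Two further checks remain for (iv). The first is that $E$ should be written with $\nabla_\ell$. The second is that $E\ge0$ requires $\int|\nabla_\ell h|^2\ge\int|h-m|^2$-type bounds, which are delicate precisely when $\ell\ge\pi$. There, Poincaré with the constant $\min(1,\pi^2/\ell^2)$ may fail. We would use the mass constraint $\int h=4\pi^2 m$ in the normalization and verify nonnegativity separately, or simply omit the claim where it fails.

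The main obstacle is the generation and Lipschitz step at low regularity. We must choose $\beta>3$ so that the coefficient $v^3$ is $C^1$. Then the term $\nabla_\ell(v^3)\cdot\nabla_\ell\Delta_\ell w$ is controlled as a lower-order perturbation, and this must hold uniformly on bounded subsets of $V_\beta$.
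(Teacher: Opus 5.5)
Your argument is correct and uses the same framework as the paper: Amann's quasilinear theory with $E_0=\Lev{2}$ and $E_1=\Hev{4}$, the interpolation lemma, the open set of positive $\Hev{\beta}$-functions, integration for mass conservation, and for $\delta=0$ the identity $\tfrac{\mathrm d}{\mathrm dt}E=-\gamma\int h^3|\nabla_\ell(\Delta_\ell h+h)|^2$. The difference is in how you split the operator.

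The paper takes only $\Acal(b)=\gamma b^3\Delta_\ell^2$ as principal part and moves the third-order nonlinearity $-3\gamma u^2\nabla_\ell u\cdot\nabla_\ell(\Delta_\ell u+u)$ into $\Fcal$. This makes generation elementary. It is proved by hand for merely bounded $b\ge b_0>0$, via Lax--Milgram for $v=\Delta_\ell u$ on $\Hevd{2}$ with the form $\bar\lambda\int\phi\bar v+\gamma\int b^3\Delta_\ell\phi\,\Delta_\ell\bar v$. The cost is a product estimate placing $\Fcal(u)$ in $\Hev{\beta-3}$, and that is where $\beta>3$ is genuinely needed.

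In your divergence-form splitting, the third-order term is linear in $w$ with coefficient $3\gamma v^2\nabla_\ell v$. Your Lipschitz bounds therefore only need $v^2\nabla_\ell v\in L_\infty$ and a second-order $F$. Since $\Hev{\beta}\hookrightarrow C^1$ already holds for $\beta>2$ in two dimensions, nothing in your route forces $\beta>3$. Your closing diagnosis of the main obstacle is therefore misplaced.

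Your real burden is generation, which you only gesture at. The operator $-\gamma v^3\Delta_\ell^2$ with nonconstant $v$ is not a Fourier multiplier plus a small perturbation, since $\gamma(v^3-m^3)\Delta_\ell^2$ is not small. You need either freezing of coefficients or exactly the paper's Lax--Milgram trick. After that, $3\gamma v^2\nabla_\ell v\cdot\nabla_\ell\Delta_\ell$ is a perturbation of relative bound zero, and the standard perturbation result applies.

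Your caveats on (iii) and (iv) are justified and go beyond the paper. For (iii), the paper simply cites Amann's continuation theorem and does not discuss $\lim$ versus $\limsup$. For (iv), it proves only monotonicity, and its own computation uses $\Delta_\ell$, so $E$ must indeed be built with $\nabla_\ell$. Moreover, $E$ is non-negative only for $\ell\le\pi$, and only if $\int$ is read as the average over $\T^2$. For $\ell>\pi$, $h=m+\varepsilon\cos\zeta$ gives $E=\tfrac{\varepsilon^2}{4}(\tfrac{\pi^2}{\ell^2}-1)<0$.
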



\begin{remark}
    We briefly comment on points (ii) and (iv) of the theorem:
    \begin{itemize}
        \item We denote in the following always
        \begin{equation*}
            m \coloneq \tfrac{1}{4\pi^2}\norm{h(t)}_{L_1} = h_{0,0}.
        \end{equation*}
        For the sake of simplicity, in subsequent calculations, we call $m$ the mass of the solution.
        \item For positive $\delta$ equation \eqref{eq:reform-PDE} does not seem to follow an energy dissipation principle. However, when $\delta$ is small the energy from property (iv) (for $\delta = 0$) remains an important tool to the subsequent sections.
    \end{itemize}
\end{remark}

The existence and uniqueness in classical periodic Hölder spaces have already been stated in \cite{pukhnachov2005capillary}.

In order to prove Theorem \ref{thm:local_existence} we reformulate problem \eqref{eq:reform-PDE} as an abstract Cauchy problem. To this end, we extract the highest order differential operator from the equation which is of the form
\begin{equation*}
    \Acal (b) \coloneq \gamma b^3 \Delta_\ell^2 : \Hev{4}(\T^2;\Cbb) \longrightarrow \Lev{2}(\T^2;\Cbb),
\end{equation*}
for some coefficient function $b\in\Lev{\infty}(\T^2)$.
For all remaining lower-order terms we define the right-hand side
\begin{equation*}
    \Fcal(u) \coloneq \delta (u^3 \cos(\theta))_\theta - u_\theta - 3 \gamma\, u^2 \nabla_\ell u \cdot \nabla_\ell (\Delta_\ell u + u) - \gamma\, u^3 \Delta_\ell u
\end{equation*}
such that we can rewrite our problem as follows:
\begin{equation}\label{eq:amann-formulation}
    \left\{
    \begin{aligned}
        h_t + \Acal(h)h &= \Fcal(h) \quad &\text{in }\Hev{4}(\T^2),\\
        h(0) &= h_0 \quad & \text{in }\Hev{\alpha}(\T^2).
    \end{aligned}
    \right.
\end{equation}


\begin{proposition}\label{prop:generator}
Let $\gamma>0$ and $b\in\Lev{\infty}(\T^2)$ with $b(\theta,\zeta) \geq b_0>0$ almost everywhere in $\T^2$ be a positive function bounded away from zero. Then, the operator $-\Acal(b)$ is the generator of an analytic semigroup, in symbols:
\begin{equation*}
     \Acal(b)
     \in 
     \Hcal\bigl(\Hev{4}(\T^2;\Cbb);\Lev{2}(\T^2;\Cbb)\bigr).
\end{equation*}
\end{proposition}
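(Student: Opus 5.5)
The plan is to reduce to the constant-coefficient operator $\gamma\Delta_\ell^2$ on $\Lev{2}(\T^2;\Cbb)$ and then handle the multiplicative coefficient $b^3$ with a change of inner product. Two facts are straightforward. First, $\Delta_\ell^2$ acts diagonally on Fourier modes with symbol $\bigl(k^2+\tfrac{\pi^2}{\ell^2}l^2\bigr)^2$. Second, this symbol is invariant under $l\mapsto -l$, so the operator preserves evenness in $\zeta$. Consequently $A_0\coloneq\gamma\Delta_\ell^2$ is a nonnegative self-adjoint operator on $\Lev{2}(\T^2;\Cbb)$ with domain exactly $\Hev{4}(\T^2;\Cbb)$. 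Indeed, the symbol is comparable to $(1+k^2+l^2)^2$ up to the low modes, with constants depending on $\ell$.

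Multiplication by $b^3$ with $b\in\Lev\infty$ and $b\ge b_0$ also preserves evenness. We will check this via the Fourier symmetry $v_{k,l}=v_{k,-l}$, which is equivalent to evenness in $\zeta$; products of even functions are even. So $\Acal(b)=\gamma b^3\Delta_\ell^2$ is well defined from $\Hev4$ into $\Lev2$.

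Next we equip $\Lev{2}(\T^2;\Cbb)$ with the weighted inner product
\[
(u,v)_b\coloneq\int_{\T^2} b^{-3}u\bar v\dd\theta\dd\zeta .
\]
Because $b_0^3\le b^3\le\norm{b}_\infty^3$, this is equivalent to the standard inner product. For $u,v\in\Hev4$ we get $(\Acal(b)u,v)_b=\gamma(\Delta_\ell^2u,v)_{L_2}$. Hence $\Acal(b)$ is symmetric and nonnegative with respect to $(\cdot,\cdot)_b$.

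For self-adjointness, note that $\Acal(b)=M_{b^3}A_0$, where $M_{b^3}$ is a bounded isomorphism of $\Lev2$. Thus $\lambda+\Acal(b)=M_{b^3}(\lambda b^{-3}+A_0)$. We need $\lambda b^{-3}+A_0\colon\Hev4\to\Lev2$ to be bijective for $\lambda>0$. We get this from Lax--Milgram on the form $\lambda(b^{-3}u,v)+\gamma(\Delta_\ell u,\Delta_\ell v)$ on $\Hev2$, which is coercive. The elliptic regularity step needed to upgrade $\Hev2$ to $\Hev4$ is immediate, since $A_0u=f-\lambda b^{-3}u\in L_2$ and $A_0$ is diagonal in Fourier. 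So $\Acal(b)$ is closed, densely defined, symmetric and surjective after a shift, hence self-adjoint and nonnegative on the Hilbert space $(\Lev2,(\cdot,\cdot)_b)$.

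Spectral calculus then shows that $-\Acal(b)$ generates an analytic (contraction) semigroup of angle $\pi/2$ on this Hilbert space. Since analyticity of a semigroup is invariant under passing to an equivalent norm, the same holds in the standard norm of $\Lev2$.

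Finally, the graph norm of $\Acal(b)$ is equivalent to the $\Hev4$ norm. This follows from the bound $\norm{\Delta_\ell^2u}\le b_0^{-3}\norm{\Acal(b)u}$ together with the Fourier characterisation. This gives $\Acal(b)\in\Hcal(\Hev4;\Lev2)$.

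The main obstacle is the surjectivity/regularity step with merely $L_\infty$ coefficients: one cannot commute $b^3$ with derivatives. The factorisation $\lambda+\Acal(b)=M_{b^3}(\lambda b^{-3}+A_0)$ avoids this, and I would verify it carefully, including preservation of evenness.
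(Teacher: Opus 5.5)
Your proof is correct, and it takes a genuinely different route from the paper.

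\textbf{The paper's route.} The paper verifies Amann's resolvent characterisation \cite[Equation (3.1)]{amann_nonhomogeneous_1993} directly. For $\Re\lambda>0$ it solves $\lambda u+\Acal(b)u=f$ by Lax--Milgram applied to a form in the auxiliary unknown $v=\Delta_\ell u$ on the mean-zero space $\Hevd{2}(\T^2;\Cbb)$. It then reconstructs $u$ by inverting $\Delta_\ell$ and fixing the mean separately. Finally, testing with $\overline{\lambda u}$ gives the half-plane estimate $\abs{\lambda}\norm{u}_{L_2}+\norm{u}_{H^4}\le C\norm{f}_{L_2}$ for $\Re\lambda\ge 1$.

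\textbf{Your route.} You exploit that $\Acal(b)=\gamma b^3\Delta_\ell^2$ is a real, positive, bounded multiplier times a nonnegative self-adjoint operator. Hence it is symmetric and nonnegative for the equivalent inner product $(u,v)_b=\int_{\T^2} b^{-3}u\bar v$. You then only need bijectivity of $\lambda+\Acal(b)$ for a single real $\lambda>0$. The factorisation $\lambda+\Acal(b)=M_{b^3}(\lambda b^{-3}+\gamma\Delta_\ell^2)$ reduces this to a coercive form on $\Hev{2}$, and analyticity then follows from the spectral theorem with no $\lambda$-uniform estimates at all.

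\textbf{Comparison.} Your argument is shorter and gives more: spectrum in $[0,\infty)$, sector angle $\pi/2$, contractivity in the weighted norm, and constants depending only on $\gamma$, $b_0$, $\norm{b}_{L_\infty}$. However, it hinges on this symmetrisable structure. The paper's hands-on resolvent estimate uses no self-adjointness. It also delivers directly the estimate in the form consumed by Amann's quasilinear theory.

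\textbf{Small slips to fix.}
\begin{enumerate}
\item In the regularity step the right-hand side should be $b^{-3}f-\lambda b^{-3}u$, since you are solving $(\lambda b^{-3}+\gamma\Delta_\ell^2)u=b^{-3}f$.
\item The graph-norm bound should read $\norm{\Delta_\ell^2u}_{L_2}\le\gamma^{-1}b_0^{-3}\norm{\Acal(b)u}_{L_2}$.
\item Your weak formulation on $\Hev{2}$ is tested only against even functions. You should note that testing with $\ee^{\ii k\theta}\cos(l\zeta)$ and using evenness of $u$, $b$ and $f$ still yields the coefficientwise identity that gives $u\in\Hev{4}$.
\end{enumerate}
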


\begin{proof}
We use the resolvent characterisation in \cite[Equation (3.1)]{amann_nonhomogeneous_1993}.
We start by showing that the operator $\lambda + \Acal(b)$ is bijective for all $\lambda \in \Cbb$ with $\Re \lambda > 0$. We fix such a $\lambda$ and let $f \in \Lev{2} (\T^2, \Cbb)$ be an arbitrary function that is even in $\zeta$. We define the sesquilinear form $a: \Hevd{2}(\T^2;\Cbb) \times \Hevd{2}(\T^2;\Cbb) \to \Cbb$ by
\begin{equation*}
a(\phi,v) = \bar\lambda \int_{\T^2} \phi \bar v \dd\theta \dd\zeta + \gamma \int_{\T^2} b^3 \Delta_\ell \phi \Delta_\ell \bar v \dd\theta \dd\zeta
\end{equation*}
and the linear form $F: \Hevd{2}(\T^2;\Cbb) \to \Cbb$ by
\begin{equation*}
F \phi = \int_{\T^2} \Delta_\ell \phi \, \bar f \dd\theta \dd\zeta.
\end{equation*}
We check the conditions for the Lax--Milgram lemma: The continuity of $a$ and $F$, the linearity of $F$ and the sesquilinearity of $a$ are obvious. For the coercivity we find
\begin{equation*}
\Re(a(v, v)) \geq \Re(\lambda) \norm{v}_{L_2}^2 + \gamma b_0^3 \norm{\Delta_\ell v}_{L_2}^2 \geq c_0 \norm{v}_{\Hevd{2}}^2.
\end{equation*}
Thus, there exists a unique $v \in \Hevd{2}(\T^2; \Cbb)$ such that $a(\phi, v) = F \phi$ holds for all $\phi \in \Hevd{2}(\T^2; \Cbb)$. This $v$ satisfies automatically the estimate
\begin{equation*}
\norm{v}_{\dot H^2} \leq C \norm{f}_{L_2}.
\end{equation*}
Then, we define $\tilde u \in \Hevd{4}(\T^2)$ as the unique solution of the equation $\Delta_\ell \tilde u = v$, i.e.
\begin{equation*}
\tilde u_{k,l} =
\begin{cases}
    0 \quad & \text{if }k=l=0,\\
    -\frac{v_{k,l}}{k^2 + \frac{\pi^2}{\ell^2} l^2} \quad & \text{else}
\end{cases}
\end{equation*}
and
\begin{equation*}
u = \tilde u + \frac{1}{\lambda} \int_{\T^2}(f - \Acal(b)u) \dd\theta \dd\zeta.
\end{equation*}
We claim that $u \in \Hev{4}(\T^2; \Cbb)$ satisfies $\lambda u + \Acal(b) u = f$. Indeed, we have $\Delta_\ell u = \Delta_\ell \tilde u = v$ and thus $a(\phi, \Delta_\ell u) = F \phi$ for all $\phi \in \Hevd{2}(\T^2;\Cbb)$. We integrate by parts to deduce
\begin{equation*}
\int_{\T^2} \Delta_\ell \phi \overline{(\lambda u + \Acal(b) u - f)} \dd\theta \dd\zeta = 0.
\end{equation*}
Since $\Delta_\ell:\Hevd{2}(\T^2;\Cbb) \to \Levd{2}(\T^2;\Cbb)$ is bijective, we conclude that
\begin{equation*}
\lambda u + \Acal(b) u - f \equiv C_0
\end{equation*}
is constant. Integrating over $\T^2$ we find that $C_0$ must be zero. This proves that $\lambda + \Acal(b)$ is surjective. Moreover, for $u$ we have the estimate
\begin{equation*}
\norm{u}_{H^4} \leq C_1 \norm{\tilde u}_{\dot H^4} + \tfrac{C_2}{\abs{\lambda}} \bigl(\norm{f}_{L_2} + \norm{\tilde u}_{\dot H^4}\bigr) \leq \Bigl(C_3 + \tfrac{C_4}{\abs{\lambda}}\Bigr) \norm{v}_{\dot H^2} + \tfrac{C_2}{\abs{\lambda}} \norm{f}_{L_2} \leq \Bigr(C_5 + \tfrac{C_6}{\abs{\lambda}}\Bigl) \norm{f}_{L_2}.
\end{equation*}
This proves that $\lambda + \Acal(b)$ is also injective.

Finally, we observe that for $\Re \lambda \geq \omega > 0$ the estimate above becomes uniform in $\lambda$. Testing the equation
\begin{equation*}
\lambda u + \Acal(b) u = f
\end{equation*}
with $\overline{\lambda u}$, we find
\begin{equation*}
\abs{\lambda}^2 \norm{u}_{L_2}^2 \leq \norm{f}_{L_2} \abs{\lambda} \norm{u}_{L_2} + \gamma \norm{b}^3_{L_\infty} \abs{\lambda} \norm{u}_{L_2} \norm{\Delta_\ell^2 u}_{L_2}
\end{equation*}
and thus, together with the $H^4$-estimate we obtain
\begin{equation*}
\abs{\lambda} \norm{u}_{L_2} \leq C \norm{f}_{L_2}.
\end{equation*}
Putting those two estimates together, we obtain the single estimate
\begin{equation*}
\abs{\lambda} \norm{u}_{L_2} + \norm{u}_{H^4} \leq C \norm{f}_{L_2}
\end{equation*}
for all $\lambda \in \Cbb$ with $\Re \lambda \geq 1$. By \cite[Equation (3.1)]{amann_nonhomogeneous_1993} we conclude $\Acal(b) \in \Hcal \bigl(\Hev{4} (\T^2; \Cbb); \Lev{2} (\T^2; \Cbb)\bigr)$. 
\end{proof}


With this result, we are ready to prove the local existence of Theorem \ref{thm:local_existence} by considering the formulation \eqref{eq:amann-formulation} and the theory developed by Amann \cite{amann_nonhomogeneous_1993}.

\begin{proof}[Proof of Theorem \ref{thm:local_existence}]
We fix $\beta\in (3,\alpha)$ and denote by $\Ocal$ the set of strictly positive functions in $\Hev{\beta}(\T^2)$. This set is well-defined and open since $\T^2$ is compact and $\Hev{\beta}(\T^2)$ continuously embeds into the space of continuous functions by the usual Sobolev embeddings.

We claim that $\Acal:\Ocal\to\Hcal\bigl(\Hev{4};\Lev{2}\bigr)$ and $\Fcal:\Ocal\to\Hev{\beta-3}(\T^2)$ as defined above are well-defined and locally Lipschitz continuous. Here, we interpret $\Hcal\bigl(\Hev{4};\Lev{2}\bigr)$ as a subset of all bounded linear maps equipped with the operator norm. Indeed, it follows from Proposition \ref{prop:generator} that $\Acal$ is well-defined. To prove that it is locally Lipschitz continuous we fix $u_0 \in \Ocal$ and choose $\eps>0$ small enough that $B_\eps(u_0;H^\beta)$ is uniformly bounded away from zero. We find for all $u,v \in B_\eps(u_0;H^\beta)$ and $h \in \Hev{4}(\T^2)$:
\begin{equation*}
\begin{aligned}
    \normb{(\Acal(u) - \Acal(v))h}_{L_2}
    &= \normb{\gamma (u^3 - v^3) \Delta_\ell^2 h}_{L_2} \leq \gamma \norm{u-v}_{L_\infty} \norm{u^2 + u v + v^2}_{L_\infty} \norm{\Delta_\ell^2 h}_{L_2}\\
    &\leq C(u_0, \eps) \gamma \norm{u-v}_{H^\beta} \norm{h}_{H^4}.
\end{aligned}
\end{equation*}

Now, we consider the right-hand side of \eqref{eq:amann-formulation}, $\Fcal:\Ocal\to\Hev{\beta-3}(\T^2)$. We use that the (pointwise) product of two Sobolev functions $u\in\Hev{s}(\T^2)$ and $v\in\Hev{r}(\T^2)$ with $s\leq r$ is an element of the weaker space $\Hev{s}(\T^2)$ as long as $r>1$. Recall that $r>1$ is exactly the condition required for $\Hev{r}(\T^2)$ to embed continuously into a Hölder space. Moreover, the multiplication is continuous in such a case \cite[Theorem 2.3]{Amann91_Multiplication}. It follows immediately that $\Fcal$ is well-defined.
We check the Lipschitz continuity only for the most critical third-derivative term: For all $u, v \in B_\eps(u_0;H^\beta)$ we find since $\beta-2>1$:
\begin{equation*}
\begin{aligned}
    & \normb{u^2 \nabla_\ell u \cdot \nabla_\ell (\Delta_\ell u + u) - v^2 \nabla_\ell v \cdot \nabla_\ell (\Delta_\ell v + v)}_{H^{\beta-3}} \\
    \leq\, & \normb{(u-v) (u+v) \nabla_\ell u \cdot \nabla_\ell (\Delta_\ell u + u)}_{H^{\beta-3}} + \normb{v^2 \nabla_\ell (u-v) \cdot \nabla_\ell (\Delta_\ell u + u)}_{H^{\beta-3}} \\
    &+ \normb{v^2 \nabla_\ell v \cdot \nabla_\ell \bigl(\Delta_\ell (u-v) + (u-v)\bigr)}_{H^{\beta-3}} \\
    \leq\, & C \Bigl(\norm{u-v}_{H^{\beta-2}} \norm{u+v}_{H^{\beta-2}} \normb{\nabla_\ell u}_{H^{\beta-2}} \normb{\nabla_\ell (\Delta_\ell u + u)}_{H^{\beta-3}} \\
    & + \norm{v}_{H^{\beta-2}}^2 \normb{\nabla_\ell (u-v)}_{H^{\beta-2}} \normb{\nabla_\ell (\Delta_\ell u + u)}_{H^{\beta-3}} \\
    & + \norm{v}_{H^{\beta-2}}^2 \normb{\nabla_\ell v}_{H^{\beta-2}} \normb{\nabla_\ell (\Delta_\ell (u-v) + (u-v))}_{H^{\beta-3}}\Bigr) \\
    \leq\, & C\left(\norm{u}_{H^\beta}^3 + \norm{u}_{H^\beta}^2 \norm{v}_{H^\beta} + \norm{u}_{H^\beta} \norm{v}_{H^\beta}^2 + \norm{v}_{H^\beta}^3\right) \norm{u-v}_{H^\beta}.
\end{aligned}
\end{equation*}

With the Lipschitz continuity set up, we can apply \cite[Theorem 12.1]{amann_nonhomogeneous_1993} to obtain a unique maximal solution which is positive in its existence interval.

Since our solution is positive, we have
\begin{equation*}
    \norm{h(t)}_{L_1} = \int_{\T^2} h(t,\theta,\zeta) \dd\theta \dd\zeta.
\end{equation*}
Testing equation \eqref{eq:reform-PDE} by the constant $1$ function and integrating by parts with respect to $\theta$ and $\zeta$ we find
\begin{equation*}
    \frac{\dd}{\dd t}\int_{\T^2} h(t,\theta,\zeta) \dd\theta\dd\zeta = 0,
\end{equation*}
thus we can conclude that solutions conserve mass.

We point out that $\Hev{4}(\T^2)$ is densely embedded in $\Lev{2}(\T^2)$ and hence the maximal existence property (iii) is an immediate consequence of \cite[Theorem 12.5]{amann_nonhomogeneous_1993}.

Finally, using equation \eqref{eq:reform-PDE} and setting $\delta = 0$ we obtain
\begin{equation*}
    \frac{\dd}{\dd t} E[h(t)] = -\int_{\T^2} h_t (\Delta_\ell h + h)\dd\theta\dd\zeta = -\gamma\int_{\T^2} h^3 \absb{\nabla_\ell (\Delta_\ell h + h)}^2 \dd\theta\dd\zeta,
\end{equation*}
thus proving property (iv).
\end{proof}

We point out that the blow-up result in our theorem is not optimal: In fact, a blow-up occurs in the space $\Hev{\mu}$ for every $\mu>\beta$. However, the existence time might depend on the size of $\beta$ and $\mu$.


\section{Positive stationary solutions} \label{sec:steady_states}

In this section we investigate positive stationary solutions of the rimming-flow problem with small gravitational influence $\delta \ll 1$ 
\begin{equation} \label{eq:PDE_full_stationary}
    \begin{cases}
        h_t + h_\theta + \gamma\,\div_\ell\bigl(h^3 \nabla_\ell(\Delta_\ell h + h)\bigr) = \delta\big(h^3 \cos(\theta)\big)_\theta,
        \quad t > 0,\, \theta,\zeta \in \Tbb &
        \\
        h_{k,-l} = h_{k,l}, \quad k,l\in\Zbb
        \\
        h(0,\theta,\zeta) = h_0(\theta,\zeta), \quad \theta,\zeta \in \Tbb
    \end{cases}
\end{equation}
with periodic boundary conditions in $\theta \in \Tbb$ and $\zeta\in\Tbb$. We study their existence and stability properties. That is, we are interested in time-independent functions $H\in\Hev{4}(\T^2)$ with $H(\theta,\zeta) > 0$ that solve the time-independent problem
\begin{equation} \label{eq:steady-states}
    H_\theta + \gamma\, \div_{\ell}\bigl(H^3 \nabla_{\ell}(\Delta_{\ell} H + H)\bigr) = 0,
    \quad \theta,\zeta \in \Tbb,
\end{equation}
with periodic boundary conditions in $\theta \in \Tbb$ and $\zeta\in\Tbb$.

\subsection{Positive stationary solutions for $\delta=0$} \label{sec:stability_delta=0}

By testing the equation with $\phi = \Delta_{\ell} H + H$, we can characterise positive steady states of fixed mass $m>0$, i.e. solutions of \eqref{eq:steady-states}, depending on the length of the cylinder. This has essentially already been observed by Pukhnachov in \cite[Proposition 1]{pukhnachov2005capillary}.


\begin{lemma}[Characterisation of positive steady states for $\delta=0$, \cite{pukhnachov2005capillary}]\label{lem:steady-states_delta=0}
Let $\delta=0$ and fix a positive mass $m>0$. Then, a function $H \in \Hev{4}(\T^2)$ is a positive stationary solution of \eqref{eq:PDE_full_stationary} if and only if
\begin{equation}\label{eq:H_steady-states_delta=0}
    \begin{cases}
        H(\theta,\zeta) = m + a \cos\left(\frac{\ell}{\pi}\zeta\right) \text{ with } |a| < m & \text{for }\frac{\ell}{\pi} \in \Z,
        \\
        H \equiv m, & \text{for }\frac{\ell}{\pi}\not\in\Z. 
    \end{cases}
\end{equation}
\end{lemma}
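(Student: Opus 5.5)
The plan is to test the stationary equation \eqref{eq:steady-states} with $\phi = \Delta_\ell H + H$ and integrate over $\T^2$. By periodicity the flux term integrates by parts to
\begin{equation*}
\gamma\int_{\T^2} \div_\ell\bigl(H^3\nabla_\ell\phi\bigr)\,\phi \dd\theta\dd\zeta = -\gamma\int_{\T^2} H^3 \absb{\nabla_\ell \phi}^2 \dd\theta\dd\zeta .
\end{equation*}
The transport term vanishes, because
\begin{equation*}
\int_{\T^2} H_\theta(\Delta_\ell H + H)\dd\theta\dd\zeta = -\frac12\int_{\T^2}\partial_\theta\bigl(|\nabla_\ell H|^2\bigr)\dd\theta\dd\zeta + \frac12\int_{\T^2}\partial_\theta (H^2)\dd\theta\dd\zeta = 0 ,
\end{equation*}
which follows after integrating $H_\theta\Delta_\ell H$ by parts. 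Hence $\int H^3|\nabla_\ell \phi|^2 = 0$. Since $H$ is positive and continuous ($\Hev{4}\hookrightarrow C$), $H\ge c>0$ on the compact torus, so $\nabla_\ell\phi\equiv 0$ and therefore $\Delta_\ell H + H \equiv C$ for some constant $C$. These manipulations are legitimate because $H\in\Hev{4}$, so $\phi\in \Hev{2}$ and all the integrals are finite.

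Next I will solve the linear equation $\Delta_\ell H + H = C$ in Fourier variables. It reads
\begin{equation*}
\bigl(1 - k^2 - \tfrac{\pi^2}{\ell^2} l^2\bigr) H_{k,l} = C\,\delta_{k0}\delta_{l0}.
\end{equation*}
The $(0,0)$ mode gives $H_{0,0} = C$, and mass fixes $C = m$. For $(k,l)\neq(0,0)$ we need $H_{k,l}=0$ unless $k^2 + \frac{\pi^2}{\ell^2}l^2 = 1$. If $k\ne 0$, then $|k|\ge1$ forces $k=\pm1$, $l=0$, so the modes $e^{\pm\ii\theta}$ survive. If $k=0$, we need $l = \pm \ell/\pi$, which is possible only if $\ell/\pi\in\Z$.

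The modes $e^{\pm\ii\theta}$ must then be excluded, and this is where the transport term re-enters. Substituting $H = m + \alpha\cos\theta+\beta\sin\theta + a\cos(\tfrac{\ell}{\pi}\zeta)$, which is the general real solution respecting the evenness $H_{k,l}=H_{k,-l}$, back into \eqref{eq:steady-states}, the flux term vanishes identically since $\nabla_\ell(\Delta_\ell H+H)=0$. What remains is $H_\theta = -\alpha\sin\theta + \beta\cos\theta = 0$, so $\alpha=\beta=0$. Thus $H = m + a\cos(\frac{\ell}{\pi}\zeta)$, where $a=0$ is forced when $\ell/\pi\notin\Z$. Positivity of $H$ is then equivalent to $|a|<m$, since $\cos$ attains $\pm1$.

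For the converse, every such $H$ satisfies $H_\theta=0$ and $\Delta_\ell H + H = m$, so \eqref{eq:steady-states} holds; $H$ is positive, has mass $m$, and is even in $\zeta$. The only delicate point is justifying the integration by parts in the testing step, and that is routine given the $H^4$ regularity.
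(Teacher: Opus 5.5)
Your proof is correct and follows essentially the same route as the paper. You test with $\Delta_\ell H + H$ to obtain $\nabla_\ell(\Delta_\ell H + H)=0$ and then use the equation to force $H_\theta = 0$; the only difference is the order of the last two steps. You solve $\Delta_\ell H + H = C$ in Fourier variables first and then eliminate the $\ee^{\pm\ii\theta}$ modes, whereas the paper first deduces $H_\theta=0$ and then solves the resulting ODE in $\zeta$.
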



\begin{proof}
\noindent\textbf{(i) } Let $H$ be defined as in \eqref{eq:H_steady-states_delta=0}. Since $m > 0$ and $|a| < m$, we have $H > 0$ on $\T^2$. If $H\equiv m$, then obviously $H \in \Hev{4}(\T^2)$ solves \eqref{eq:steady-states}. If $H = H(\zeta) = m + a \cos(\frac{\ell}{\pi}\zeta)$, then $H$ is independent of $\theta$ and hence $H_\theta = 0$. This in turn implies
\begin{equation*}
    \nabla_\ell (\Delta_\ell H + H) =
    \begin{pmatrix}
        0\\ \tfrac{\pi^3}{\ell^3}H_{\zeta\zeta\zeta} + \tfrac{\pi}{\ell}H_\zeta
    \end{pmatrix}
    = 0.
\end{equation*}
Moreover, we have $H \in \Hev{4}(\T^2)$ since $\tfrac{\ell}{\pi}\in\Z$.

\medskip
\noindent\textbf{(ii) } Conversely, let $H \in \Hev{4}(\T^2)$ be an arbitrary positive solution of \eqref{eq:steady-states}. We test the equation with $\varphi = (\Delta_{\ell} H + H)$ and observe that
\begin{equation*}
    \begin{split}
        0
        &=
        \int_{\T^2} H_\theta (\Delta_{\ell} H + H)\, \dd \theta\, \dd\zeta
        +
        \gamma \int_{\T^2} \div_{\ell}(H^3 \nabla_{\ell}(\Delta_{\ell} H + H)\bigr) (\Delta_{\ell} H + H)\, \dd \theta\, \dd\zeta
        \\
        &=
        - \frac{1}{2} \int_{\T^2} \left(\bigl(|\nabla_{\ell} H|^2\bigr)_\theta + (H^2)_\theta\right)\, \dd\theta\, \dd\zeta
        -
        \gamma \int_{\T^2} H^3 |\nabla_{\ell}\Delta_{\ell} H + \nabla_{\ell} H|^2\, \dd \theta\, \dd \zeta
        \\
        &=
        - \gamma \int_{\T^2} H^3 |\nabla_{\ell}\Delta_{\ell} H + \nabla_{\ell} H|^2\, \dd \theta\, \dd\zeta.
    \end{split}
\end{equation*}
Since the integrand on the right-hand side is non-negative and $H > 0$ in $\T^2$, we must have $\nabla_\ell\Delta_\ell H + \nabla_\ell H=0$ in $\T^2$. Inserting this into the equation yields $H_\theta = 0$ in $\T^2$ and consequently, $\nabla_{\ell} \Delta_{\ell} H + \nabla_{\ell} H = 0$ simplifies to $(\frac{\pi}{\ell})^2H_{\zeta \zeta \zeta} + H_\zeta = 0$. This implies that $H$ is of the form
\begin{equation*}
    H(\zeta) = m + a \cos\left(\tfrac{\ell}{\pi}\zeta\right) + b \sin\left(\tfrac{\ell}{\pi}\zeta\right), \quad \zeta \in \T,
\end{equation*}
with $a, b \in \R$ such that $|a|+|b| < m$. 
Since $H \in \Hev{4}(\T^2)$ is periodic in $\zeta \in \T$, the ratio $\frac{\ell}{\pi} \in \Z$ must be an integer or $a=b=0$. Moreover, since $H \in \Hev{4}(\T^2)$ is even in $\zeta \in \T$, the coefficient $b$ must be zero and we conclude the desired result.
\end{proof}

\begin{remark}
    Note that solutions of the form $H \equiv m$ are cylinders around the $\zeta$-axis. \\
    Cross-sections of solutions of the form \eqref{eq:H_steady-states_delta=0} are always circles centred at the $\zeta$-axis, however, in the latter case its radius may vary with $\zeta$.
\end{remark}

We first investigate the stability and instability of the constant stationary solutions $H\equiv m>0$ both in the case of $\frac{\ell}{\pi}\not\in\Zbb$ and $\frac{\ell}{\pi}\in\Zbb$ (with $a=0$).
As it has already been pointed out in \cite{pukhnachov2005capillary},
for $\delta = 0$ we cannot expect the constants (or any other positive steady state for that matter) to be asymptotically stable: There will always exist a travelling wave solution of the form
\begin{equation*}
    H(t,\theta,\zeta) = m + a \cos\left(\tfrac{\ell}{\pi}\zeta\right) + \kappa_{-1} \cos(\theta-t) + \kappa_1 \sin(\theta-t)
\end{equation*}
with $a=0$ if $\frac{\ell}{\pi}\not\in\Zbb$, which for small $\kappa_{-1}$ and $\kappa_1$ can be arbitrarily close to the given steady state but will never converge to the same. This travelling wave phenomenon is of course caused by the transport term $H_\theta$ in the equation and is due to the rotation of the cylinder. We eliminate such dynamics from the problem by switching to a rotating coordinate frame instead:
\begin{equation*}
    H(t,\theta,\zeta) = \tilde H(t,\theta-t,\zeta) \quad\text{for } t>0 \text{ and } \theta,\zeta\in\T.
\end{equation*}
In this co-moving reference frame the transport term disappears as expected and our equation transforms to
\begin{equation}\label{eq:co-moving_reference_frame}
    \tilde H_t + \gamma\ \div_\ell \bigl(\tilde H^3 \nabla_\ell(\Delta_\ell \tilde H + \tilde H)\bigr) = 0.
\end{equation}
Since our steady states for the original equation are independent of $\theta$, they are automatically also steady states for this new equation. We linearise \eqref{eq:co-moving_reference_frame} around $\tilde H\equiv m$, i.e. we set $\tilde H(t) = m + u(t)$ with $u(t)\in\Hevd{4}(\T^2)$ for all $t$ and obtain
\begin{equation}\label{eq:stability-linearised}
    u_t = L u + R(u)
\end{equation}
with
\begin{align*}
    L u &\coloneq -\gamma m^3 \Delta_\ell(\Delta_\ell u + u),\\
    R(u) &\coloneq -\gamma\ \div_\ell \bigl((3m^2 u + 3m u^2 + u^3) \nabla_\ell (\Delta_\ell u + u)\bigr).
\end{align*}
In Fourier representation the linear part $Lu$ reads
\begin{equation*}
    (Lu)_{k,l} = -\gamma m^3 \bigl(k^2+\tfrac{\pi^2}{\ell^2}l^2\bigr)\bigl(k^2 + \tfrac{\pi^2}{\ell^2}l^2 - 1\bigr) u_{k,l}.
\end{equation*}
Hence, depending on the value of $\ell$ (the ratio of the cylinder length to its radius) we observe different stability behaviour:
The surface tension always acts to minimise the area of the free surface which is constrained by the zero-contact-angle and no-flux boundary conditions. While for short cylinders profiles that are independent of $\zeta$ turn out to be optimal, for long cylinders a lower surface area (compared to solutions independent of $\zeta$) can be achieved if the fluid breaks into "bubbles" along the $\zeta$-axis. The length scale of these bubbles is then expected to match the critical length of the cylinder, where the constant (i.e. fully cylindrical) steady states $H\equiv m$ transition from being stable to unstable. We refer to the book \cite[Chapter V, \emph{The forms of cells of liquid films}]{darcy_thompson} for a discussion of this topic.
Indeed, for $\ell>\pi$ we find the unstable eigenvalue $\gamma m^3 \tfrac{\pi^2}{\ell^2}\bigl(1-\tfrac{\pi^2}{\ell^2}\bigr)>0$ corresponding to the eigenfunction $u(\theta,\zeta) = \cos(\zeta)$.
As expected the constant steady states turn out to be unstable in this case (cf. \cite{lunardi_1995}):

\begin{proposition}[Instability of constant steady states for $\delta=0$ and $\ell > \pi$]\label{prop:const-unstable-large-ell}
    Let $\ell>\pi$. Then, there exist constants $C,\omega>0$ depending on $\ell$ such that equation \eqref{eq:stability-linearised} admits a non-trivial backward solution $u\in C\bigl((-\infty,0];\Hevd{4}(\T^2)\bigr)\cap C^1\bigl((-\infty,0];\Levd{2}(\T^2)\bigr)$ which satisfies
    \begin{equation*}
        \normb{u(t)}_{H^4} \leq C\, \ee^{\omega t}
    \end{equation*}
    for all $t<0$.
\end{proposition}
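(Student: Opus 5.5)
The plan is to construct the backward solution by a Lyapunov--Perron fixed-point argument on $(-\infty,0]$, in the spirit of the instability theory for fully nonlinear/quasilinear parabolic problems in \cite{lunardi_1995}.

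\emph{Spectral splitting.} First I would analyse the linear operator $L\colon \Hevd{4}(\T^2)\subset\Levd{2}(\T^2)\to\Levd{2}(\T^2)$. It is diagonal in the Fourier basis, with eigenvalues
\begin{equation*}
\lambda_{k,l}=-\gamma m^3\mu_{k,l}(\mu_{k,l}-1),\qquad \mu_{k,l}=k^2+\tfrac{\pi^2}{\ell^2}l^2 .
\end{equation*}
On the mean-zero space we have $\mu_{k,l}>0$. The eigenvalue is positive exactly when $0<\mu_{k,l}<1$, which forces $k=0$ and $0<|l|<\ell/\pi$. This gives a finite, nonempty set; it contains $l=\pm1$, i.e. the mode $\cos(\zeta)$, because $\ell>\pi$.

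All remaining eigenvalues are $\le 0$. The value $0$ is attained for $\mu=1$, e.g. by the modes $\cos\theta$ and $\sin\theta$. They tend to $-\infty$ like $-\gamma m^3\mu^2$.

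Let $\lambda_+>0$ be the smallest positive eigenvalue, and fix $\omega\in(0,\lambda_+)$. Let $P_+$ be the finite-rank spectral projection onto the unstable modes and set $P_-=I-P_+$. Both commute with $L$ and preserve evenness in $\zeta$. This gives the dichotomy estimates
\begin{align*}
\|e^{tL}P_+\|&\le Ce^{\lambda_+t}\quad (t\le0),\\
\|e^{tL}P_-\|&\le C\quad (t\ge0).
\end{align*}
The second estimate, combined with analyticity from Proposition~\ref{prop:generator}, gives the usual smoothing bounds $\|e^{tL}P_-\|_{\Levd{2}\to\Hevd{4}}\le Ct^{-1}$. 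The centre modes do no harm, since we only need growth rates below the weight $\omega>0$.

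\emph{Fixed point.} For $\xi\in P_+\Levd{2}$ small I would look for solutions of
\begin{equation*}
u(t)=e^{tL}\xi-\int_t^0 e^{(t-s)L}P_+R(u(s))\dd s+\int_{-\infty}^t e^{(t-s)L}P_-R(u(s))\dd s ,\qquad t\le 0,
\end{equation*}
in a ball of a weighted space in which $\sup_{t\le0}e^{-\omega t}\|u(t)\|_{H^4}$ is controlled.

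$R(u)$ is a divergence, so it has zero mean, and it preserves evenness in $\zeta$. It is at least quadratic near $0$, with $\|R(u)-R(v)\|_{L_2}\le C(\|u\|_{H^4}+\|v\|_{H^4})\|u-v\|_{H^4}$ for small $u,v$. Using $\lambda_+>\omega$ for the $P_+$ integral and $0<\omega$ for the $P_-$ integral, the map should be a contraction for $\|\xi\|$ small.

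The solution is nontrivial because $P_+u(0)=\xi-\int_{-\infty}^0\dots$ equals $\xi+O(\|\xi\|^2)$. Taking $\xi=\eps\cos(\zeta)$ therefore gives $u\not\equiv0$. The bound $\|u(t)\|_{H^4}\le Ce^{\omega t}$ is built into the norm. The $C^1$ regularity in $\Levd{2}$ follows from the equation once $u\in C(H^4)$ and $R(u)$ is Hölder in time.

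\emph{Main obstacle.} The hard step is the loss of derivatives. $R$ maps $H^4$ only into $L_2$, since it contains $\div_\ell(3m^2u\,\nabla_\ell\Delta_\ell u)$, so the $P_-$ convolution is not bounded from $C(L_2)$ to $C(H^4)$ (the kernel $t^{-1}$ is not integrable).

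I would first try to resolve this with maximal regularity in weighted Hölder spaces. Work in $C^{\beta}_\omega((-\infty,0];\Levd{2})$ for the data and $C^{\beta}_\omega((-\infty,0];\Hevd{4})\cap C^{1+\beta}_\omega((-\infty,0];\Levd{2})$ for the solution. The optimal regularity results for analytic semigroups with a spectral gap, as in \cite{lunardi_1995}, apply on the half-line with exponential weight $e^{\omega t}$ because $\omega\notin\sigma(L)$. In these spaces $R$ is smooth and quadratic, and the contraction argument goes through. This is exactly Lunardi's instability theorem, so alternatively I would simply verify its hypotheses: $R\in C^1$ near $0$ from $\Hevd{4}$ to $\Levd{2}$ with $R(0)=0$ and $R'(0)=0$, and $\sigma(L)\cap\{\Re\lambda>0\}\neq\emptyset$. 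That verification is straightforward given the spectral computation above.
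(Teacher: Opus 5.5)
Your proposal is correct and is essentially the paper's proof. The paper simply applies \cite[Theorem 9.1.3]{lunardi_1995} and checks the same hypotheses you list: $L$ is sectorial with graph norm equivalent to the $\dot H^4$-norm; its unstable spectrum, which contains the eigenvalue of $\cos\zeta$, is nonempty and bounded away from the imaginary axis; and $R(0)=0$, $R'(0)=0$, with $R$ Fréchet differentiable with locally Lipschitz derivative (automatic here, since $R$ is polynomial in $u$ and its derivatives). Your weighted-H\"older Lyapunov--Perron construction is just the mechanism inside that theorem, with one small slip: in your fixed-point formula the $P_+$-integral vanishes at $t=0$, so $P_+u(0)=\xi$ exactly, which makes nontriviality immediate.
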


Proving the existence of a non-trivial backward solution $u$ on $(-\infty,0]$ converging to zero as $t\to-\infty$ asserts that we can always find an initial value arbitrarily close to zero (by going far enough back in time) which will eventually (at $t=0$) result in a solution of constant distance to zero. We expect such solutions to break into "bubbles" of size $\pi$ along the $\zeta$-axis in finite time, as described above. However, at this point no rigorous proof of this phenomenon is available.

\begin{proof}[Proof of Proposition \ref{prop:const-unstable-large-ell}]
    This is an application of \cite[Theorem 9.1.3]{lunardi_1995}. In fact, $L$ is a sectorial operator by Proposition \ref{prop:generator} and the perturbation result \cite[Proposition 2.4.1]{lunardi_1995}; its graph norm is equivalent to the $\dot H^4$-norm. From the observations above we know that $L$ has at least one unstable eigenvalue and the set of unstable eigenvalues is bounded away from the imaginary axis. Moreover, we have $R(0)=0$ and $R$ is Fréchet differentiable with locally Lipschitz continuous derivative
    \begin{equation*}
        R'(u)v = -\gamma\ \div_\ell \bigl((3m^2 v + 6m u v + 3 u^2 v)\nabla_\ell (\Delta_\ell u + u)\bigr) - \gamma\ \div_\ell\bigl((3m^2 u + 3m u^2 + u^3)\nabla_\ell(\Delta_\ell v + v)\bigr)
    \end{equation*}
    and $R'(0)=0$.
\end{proof}

For $\ell\leq\pi$ the situation is quite different: There is no unstable part of the spectrum of $L$, only a non-trivial kernel. We decompose the function space $\Hevd{4}(\T^2)$ into the kernel of $L$ and its orthogonal complement, and define subspaces $V,W\subset \Hevd{4}(\T^2)$ by
\begin{equation*}
    V \coloneq \ker L \quad\text{and} \quad W \coloneq (\ker L)^\perp.
\end{equation*}
From the Fourier representation of $L$ it is easy to see that a basis of $V$ is given by $\bigl\{\sin(\theta), \cos(\theta)\bigr\}$ if $\ell<\pi$ and $\bigl\{\sin(\theta),\cos(\theta),\cos(\zeta)\bigr\}$ if $\ell=\pi$. Either way, $V$ is a finite-dimensional closed subspace of $\Hevd{4}(\T^2)$ and we denote the corresponding orthogonal projections by $P_V$ and $P_W=\mathrm{Id}-P_V$. Before we can employ this decomposition to prove a stability result for positive constant steady states we need the following technical lemma:

\begin{lemma}\label{lem:equiv-norms}
    For each $k\geq 2$ the functionals 
    \begin{equation*}
        \norm{w}_{H^{2k}},
        \quad
        \normb{\Delta_\ell^k w}_{L_2},
        \quad
        \sqrt{\normb{\Delta_\ell^k w}_{L_2}^2 - \normb{\nabla_\ell \Delta_\ell^{k-1} w}_{L_2}^2}
        \quad \text{and} \quad
        \normb{\nabla_\ell (\Delta_\ell w + w)}_{H^{2k-3}}.
    \end{equation*}
    constitute equivalent norms for the subspace $W\cap \Hevd{2k}(\T^2)$.
\end{lemma}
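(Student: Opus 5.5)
The plan is to work entirely on the Fourier side, where every functional in the statement becomes a weighted $\ell_2$-sum over the modes $(k,l)\neq(0,0)$ that survive in $W$. Write $\lambda_{k,l} \coloneq k^2 + \tfrac{\pi^2}{\ell^2} l^2$. By Parseval (up to the harmless factor $4\pi^2$) we have
\begin{equation*}
    \normb{\Delta_\ell^k w}_{L_2}^2 \simeq \sum \lambda_{k,l}^{2k}\abs{w_{k,l}}^2,
    \qquad
    \normb{\nabla_\ell \Delta_\ell^{k-1} w}_{L_2}^2 \simeq \sum \lambda_{k,l}^{2k-1}\abs{w_{k,l}}^2.
\end{equation*}
Hence the third functional squared is $\sum \lambda_{k,l}^{2k-1}(\lambda_{k,l}-1)\abs{w_{k,l}}^2$.

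For the last functional, $\nabla_\ell(\Delta_\ell w + w)$ has Fourier coefficients of modulus $\lambda_{k,l}^{1/2}\abs{1-\lambda_{k,l}}\abs{w_{k,l}}$, up to the anisotropic factors $k$ and $\tfrac{\pi}{\ell}l$. Its $H^{2k-3}$-norm squared is therefore comparable to $\sum (1+k^2+l^2)^{2k-3}\lambda_{k,l}(\lambda_{k,l}-1)^2\abs{w_{k,l}}^2$. Finally, $\norm{w}_{H^{2k}}^2 = \sum (1+k^2+l^2)^{2k}\abs{w_{k,l}}^2$.

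The key observations are the following.

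\emph{(a)} Since $\ell$ is fixed, $\min(1,\pi^2/\ell^2)(k^2+l^2) \le \lambda_{k,l} \le \max(1,\pi^2/\ell^2)(k^2+l^2)$.

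\emph{(b)} On the zero-mean subspace, $k^2+l^2 \ge 1$, so $1+k^2+l^2 \simeq k^2+l^2 \simeq \lambda_{k,l}$.

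\emph{(c)} The decisive point is that the modes of $W$ stay uniformly away from the critical value $\lambda_{k,l}=1$. Here I use $\ell\le\pi$, the standing assumption of this part of the section under which $V$ and $W$ were defined. Then $\lambda_{k,l} \ge k^2 + l^2$. The modes with $\lambda_{k,l}=1$ are exactly those spanning $\ker L = V$: namely $(\pm1,0)$, and additionally $(0,\pm1)$ when $\ell=\pi$. Every other nonzero mode has $\lambda_{k,l}\ge \min\{2,\ \pi^2/\ell^2\cdot 1,\ 4\}$ whenever $\lambda_{k,l}\neq 1$. Concretely, if $\ell<\pi$ then $(0,\pm1)$ gives $\pi^2/\ell^2>1$; otherwise $\lambda_{k,l}\ge 2$. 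So there is $c_\ell>1$ with $\lambda_{k,l}\ge c_\ell$ for all modes in $W$.

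This spectral gap gives $\lambda_{k,l}-1 \ge (1-c_\ell^{-1})\lambda_{k,l}$ uniformly on $W$. Each weight above is then bounded above and below by constant multiples of $\lambda_{k,l}^{2k}$:
\begin{equation*}
    \lambda_{k,l}^{2k-1}(\lambda_{k,l}-1) \simeq \lambda_{k,l}^{2k}
    \quad\text{and}\quad
    (1+k^2+l^2)^{2k-3}\lambda_{k,l}(\lambda_{k,l}-1)^2 \simeq \lambda_{k,l}^{2k-3+1+2} = \lambda_{k,l}^{2k}.
\end{equation*}
By (a) and (b) the same holds for the $H^{2k}$ weight $(1+k^2+l^2)^{2k}$. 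Summing mode by mode yields the equivalences. Positivity of the expressions under the square root, and definiteness, follow from the same bounds.

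The main step requiring care is (c): identifying precisely which modes are removed by the projection onto $W$, and extracting the uniform gap $c_\ell>1$ in both cases $\ell<\pi$ and $\ell=\pi$. Everything else is elementary comparison of polynomial weights. The restriction $k\ge2$ only ensures that $2k-3\ge1$, so the last functional is a genuine Sobolev norm of nonnegative order; it plays no further role.
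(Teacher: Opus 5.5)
Your proposal is correct and follows the paper's own argument: pass to Fourier coefficients, use that for $\ell\le\pi$ every mode surviving in $W$ satisfies $k^2+\tfrac{\pi^2}{\ell^2}l^2\ge c_\ell>1$ (the paper phrases this as $\lambda-1\ge c_0(1+k^2+l^2)$), and then compare the polynomial weights mode by mode. If anything, your write-up is more careful than the paper's about where $\ell\le\pi$ enters. Do rename the Fourier indices, though: using $k$ both as the exponent in $\Delta_\ell^k$ and as a Fourier index makes expressions like $\lambda_{k,l}^{2k}$ ambiguous.
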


\begin{proof}
    We observe that the Fourier coefficient $w_{m,n}$ of $w$ vanishes whenever $m^2+\tfrac{\pi^2}{\ell^2}n^2 \leq 1$. For all other pairs $(m,n)$ we have $m^2+\tfrac{\pi^2}{\ell^2}n^2 - 1 \geq c_0 (1 + m^2 + n^2)$ with some constant $c_0>0$ depending on $\ell$. Rewriting (i)-(iv) in their Fourier representation, we thus obtain
    \begin{equation*}
        \norm{w}_{H^{2k}}^2 \leq \tfrac{1}{c_0^3} \normb{\nabla_\ell (\Delta_\ell w + w)}_{H^{2k-3}} \leq \tfrac{1}{c_0^{2k}} \Bigl(\normb{\Delta_\ell^k w}_{L_2}^2 - \normb{\nabla_\ell \Delta_\ell^{k-1} w}_{L_2}^2\Bigr) \leq \tfrac{1}{c_0^{2k}} \normb{\Delta_\ell^k w}_{L_2}^2 \leq \bigl(\tfrac{\pi^2}{c_0 \ell^2}\bigr)^{2k} \norm{w}_{H^{2k}}^2.
    \end{equation*}
\end{proof}

We are going to prove that the zero solution of \eqref{eq:stability-linearised} is stable for small cylinder lengths. In fact, we are going to see that solutions starting close enough to zero are not only going to stay close to zero but will also converge exponentially to a single periodic orbit $t\mapsto v_\ast(\theta-t,\zeta)$ where $v_\ast\in V$ depends on the initial value. For our original equation without gravity ($\delta = 0$) this will yield the following stability result:

\begin{theorem}[Stability of constant steady states for $\delta=0$ and $\ell \leq \pi$]\label{thm:stability}
    Let $\ell\leq\pi$ and $\delta = 0$. Then, for every $\eps>0$ there exists an $\eps_0$ such that for every $H_0\in\Hev{4}(\T^2)$ satisfying $\norm{H_0-m}_{H^4}<\eps_0$ and $\tfrac{1}{4\pi^2}\int_{\T^2} H_0 \dd\theta\dd\zeta = m>0$ the full equation \eqref{eq:PDE_full_stationary} without gravity admits a unique global positive smooth solution $H\in C^\infty(\T^2)$ which satisfies
    \begin{equation}\label{eq:const-stability}
        \norm{H(t)-m}_{H^4} < \eps \quad\text{for all }t>0.
    \end{equation}
    Moreover, the constant steady state is orbitally exponentially stable, meaning that there exists a travelling wave profile $v_\ast\in V$ depending on $H_0$ satisfying necessarily $\norm{v_\ast}_{H^4}\leq C \eps_0$ such that
    \begin{equation*}
        \normb{H(t,\theta,\zeta) - m - v_\ast(\theta-t,\zeta)}_{H^4} \leq C\,\eps_0\,\ee^{-\omega t}
    \end{equation*}
    for constants $\omega,C>0$ and all $t>0$.
\end{theorem}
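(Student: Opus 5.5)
We work in the co-moving frame \eqref{eq:co-moving_reference_frame} and write $\tilde H(t) = m + u(t)$ with $u(t)\in\Hevd{4}(\T^2)$. Mass conservation (Theorem \ref{thm:local_existence}(ii)) guarantees the zero mean of $u$. We decompose $u = v + w$ with $v = P_V u$ and $w = P_W u$. Since $V$ is spanned by $\sin\theta,\cos\theta$ (and $\cos\zeta$ if $\ell=\pi$), and $L$ vanishes on $V$ and is self-adjoint, the system \eqref{eq:stability-linearised} splits into
\begin{equation*}
    v_t = P_V R(v+w), \qquad w_t = L w + P_W R(v+w).
\end{equation*}
The key structural observation is that every $v\in V$ satisfies $\nabla_\ell(\Delta_\ell v + v) = 0$, because $k^2+\tfrac{\pi^2}{\ell^2}l^2 = 1$ on the support of $v$. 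Hence $\nabla_\ell(\Delta_\ell u + u) = \nabla_\ell(\Delta_\ell w + w)$, and $R(u)$ is at least linear in $w$. Precisely, $\norm{R(u)}_{L_2}\lesssim \norm{u}_{H^4}\norm{w}_{H^4}$ for $\norm{u}_{H^4}\le 1$, and similarly in weaker norms.

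\textbf{Step 1 (energy estimate for $w$).} We use the dissipation structure of Theorem \ref{thm:local_existence}(iv), now at a higher level. Test the $\tilde H$-equation with $\Delta_\ell^2(\Delta_\ell \tilde H + \tilde H)$, or equivalently differentiate $\norm{\Delta_\ell^2 w}^2 - \norm{\nabla_\ell\Delta_\ell w}^2$ in time. Lemma \ref{lem:equiv-norms} with $k=2$ makes this quantity equivalent to $\norm{w}_{H^4}^2$. The linear part gives the dissipation $-\gamma m^3\norm{\nabla_\ell(\Delta_\ell w+w)}_{H^{3}}^2 \sim -c\norm{w}_{H^6}^2$. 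The nonlinear part is bounded by $C\norm{u}_{H^4}\norm{w}_{H^6}^2$, using that one factor always carries $\nabla_\ell(\Delta_\ell w+w)$ and integrating by parts so that at most three derivatives beyond $H^3$ fall on each side. As long as $\norm{u}_{H^4}\le\eta$ is small, this yields $\tfrac{\dd}{\dd t}\mathcal E(w) \le -c\norm{w}_{H^6}^2 \le -2\omega\,\mathcal E(w)$, so $\norm{w(t)}_{H^4}\le C\norm{w(0)}_{H^4}\ee^{-\omega t}$.

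\textbf{Step 2 (control of $v$ and bootstrap).} Since $V$ is finite-dimensional, $\abs{v_t}\le C\norm{R(u)}_{L_2}\le C\eta\norm{w}_{H^4}$. The right-hand side is integrable in time, so $v(t)\to v_\ast$ exponentially with $\norm{v(t)-v_\ast}\le C\eps_0\ee^{-\omega t}$ and $\norm{v_\ast}\le C\eps_0$. We close with a continuity argument. Choose $\eps_0$ so small that $C\eps_0<\min(\eta,\eps)$. On any interval where $\norm{u}_{H^4}\le\eta$ the two estimates give $\norm{u}_{H^4}\le C\eps_0<\eta$. Hence the interval is all of $[0,T)$, and positivity stays uniform because $H^4\hookrightarrow C$. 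The criterion of Theorem \ref{thm:local_existence}(iii), applied with $\alpha<4$, then gives $T=\infty$. Smoothness follows from parabolic smoothing: iterate the same energy estimates at every level $H^{2k}$ via Lemma \ref{lem:equiv-norms}, or invoke the analytic-semigroup regularity. Returning to the fixed frame turns $m+v_\ast(\theta,\zeta)$ into $m+v_\ast(\theta-t,\zeta)$. The $H^4$ norm is invariant under $\theta$-translation, so the orbital estimate follows.

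\textbf{Main obstacle.} Step 1 is the hardest part. It requires a genuine a priori estimate in $H^4$ for a quasilinear fourth-order equation, with the nonlinear terms absorbed into the $H^6$ dissipation. The danger is the top-order term $3m^2 u\,\Delta_\ell^3 w$-type contributions, which carry six derivatives. I would first try to handle these by writing $R(u)=-\gamma\div_\ell\big(((m+u)^3-m^3)\nabla_\ell(\Delta_\ell w+w)\big)$ and pairing it symmetrically with $\Delta_\ell^2(\Delta_\ell w + w)$. After integrating by parts, the highest-order piece then becomes $-\gamma\int((m+u)^3-m^3)\abs{\nabla_\ell\Delta_\ell(\Delta_\ell w+w)}^2$, which is absorbed by the main dissipation for small $u$. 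The remaining commutator terms involve derivatives of $u$ of order at most four times $H^6$ and $H^5$ norms of $w$, and are controlled by Sobolev embedding in two dimensions. If the regularity at $t=0$ is insufficient for this computation (only $H^4$ data are assumed), we would perform it for $t>0$, where $h(t)$ is smooth, and pass to the limit $t\to0$ by continuity.
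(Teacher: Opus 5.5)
Your proposal is correct and follows essentially the paper's proof. Both use the same splitting $u=v+w$ with $v=P_Vu\in V$ and $w=P_Wu$ in the co-moving frame, an $H^4$-level energy estimate for $w$ with $H^6$ dissipation via Lemma \ref{lem:equiv-norms} (the paper tests with $\Delta_\ell^4 w$), integration of the $v$-equation with a Cauchy argument for $v_\ast$, and a continuity argument to get global existence.

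The ``main obstacle'' you anticipate does not actually arise. The paper bounds the nonlinear contribution directly by $C\norm{u}_{H^3}\norm{w}_{H^6}^2$ and absorbs it for small $u$, so no symmetric pairing or commutator analysis is needed. Also note that the multiplier matching $\norm{\Delta_\ell^2 w}_{L_2}^2-\norm{\nabla_\ell\Delta_\ell w}_{L_2}^2$ is $\Delta_\ell^3(\Delta_\ell w+w)$. Testing with $\Delta_\ell^2(\Delta_\ell w+w)$, as you wrote, only gives an $H^3$-level estimate.
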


Theorem \ref{thm:stability} is actually stronger than simple stability of constant steady states as it proves exponential convergence to a single travelling wave. The same is also true for non-constant steady states $m + a \cos\bigl(\tfrac{\ell}{\pi}\zeta\bigr)$ (in the case $\tfrac{\ell}{\pi}\in\Zbb$) if $a$ is small enough: If we start close enough to such a steady state, we will also be close to the constant steady state and Theorem \ref{thm:stability} applies.

A similar theorem has been proved in \cite[Theorem 5.4]{pernas_castano_analysis_2020} for a corresponding 1D equation. We follow essentially the same strategy: In the subspace $W$ corresponding to the stable spectrum of $L$ we prove exponential decay in $\Hevd{4}(\T^2)$. This will be sufficient to control the right-hand side of the equation when projected onto the kernel space $V$.

\begin{proof}[Proof of Theorem \ref{thm:stability}]
    For $\eps>0$ small enough inequality \eqref{eq:const-stability} ensures that $H$ remains bounded away from zero and does not blow up in $H^4$. Thus, choosing $\eps_0<\eps$ there exists a unique smooth solution at least as long as \eqref{eq:const-stability} remains true. We define
    \begin{equation*}
        T_\eps \coloneq \sup\setB{t\geq 0}{\normb{H(s)-m}_{H^4} < \eps \text{ for all }s\leq t}
    \end{equation*}
    and want to show that $T_\eps=\infty$ if we choose $\eps_0>0$ small enough. With the projections $P_V$ and $P_W$ defined above we can reformulate equation \eqref{eq:stability-linearised} to
    \begin{equation*}
        \left\{
        \begin{aligned}
            v_t &= -\gamma\, P_V\left[\div_\ell\bigl((m+u)^3 \nabla_\ell(\Delta_\ell w + w)\bigr)\right],\\
            w_t &= Lw -\gamma\, P_W\left[\div_\ell\bigl((3m^2 u + 3m u^2 + u^3)\nabla_\ell(\Delta_\ell w + w)\bigr)\right],
        \end{aligned}
        \right.
    \end{equation*}
    where $u(t,\theta,\zeta) = H(t,\theta+t,\zeta)-m$, $v(t) = P_V u(t)$ and $w(t)=P_W u(t)$. We test the equation for $w$ with $\Delta_\ell^4 w$ and obtain in the time interval $(0,T_\eps)$
    \begin{equation*}
    \begin{aligned}
        \frac{\dd}{\dd t} \frac{1}{2} \normb{\Delta_\ell^2 w}_{L_2}^2
        &= \left(\Delta_\ell^4 w, Lw\right)_{L_2} - \gamma \Bigl(\Delta_\ell^3 w, \Delta_\ell\bigl(\div_\ell\bigl((3m^2 u + 3m u^2 + u^3) \nabla_\ell (\Delta_\ell w + w)\bigr)\bigr)\Bigr)_{L_2}\\
        &\leq -\gamma m^3 \Bigl(\normb{\Delta_\ell^3 w}_{L_2}^2 - \normb{\nabla_\ell \Delta_\ell^2 w}_{L_2}^2\Bigr) + C \gamma \normb{\Delta_\ell^3 w}_{L_2} \norm{u}_{H^3} \normb{\nabla_\ell(\Delta_\ell w + w)}_{H^3},
    \end{aligned}
    \end{equation*}
    whence, using that $\norm{u}_{H^4}<\eps$ by the definition of $T_\eps$ and applying Lemma \ref{lem:equiv-norms}, we can choose $\eps>0$ small enough such that
    \begin{equation*}
        \frac{\dd}{\dd t} \frac{1}{2} \normb{\Delta_\ell^2 w}_{L_2}^2 \leq -\gamma c_0 m^3 \normb{\Delta_\ell^3 w}_{L_2}^2 \leq -\gamma c_0 m^3 \normb{\Delta_\ell^2 w}_{L_2}^2
    \end{equation*}
    for some constant $c_0>0$. By Gronwall's lemma we conclude that
    \begin{equation}\label{eq:stable-eigenspace-exp-decay}
        \normb{\Delta_\ell^2 w(t)}_{L_2} \leq \normb{\Delta_\ell^2 w(0)}_{L_2} \ \ee^{-\gamma c_0 m^3 t} \leq \eps_0\ \ee^{-\gamma c_0 m^3 t}
    \end{equation}
    for all $0\leq t\leq T_\eps$. With this exponential decay in mind we now turn to the evolution equation for $v$. By the fundamental theorem of calculus we can estimate for all $0\leq t\leq T_\eps$:
    \begin{equation}\label{eq:stab-kernel-estimate}
    \begin{aligned}
        \norm{v(t)}_{L_2}
        &\leq \norm{v(0)}_{L_2} + \gamma\int_0^t \normb{\div_\ell\bigl((m+u(s))^3 \nabla_\ell(\Delta_\ell w(s) + w(s))\bigr)}_{L_2} \dd s\\
        &\leq \eps_0 + \gamma \int_0^t \norm{m + u(s)}_{H^2}^3 \normb{\nabla_\ell (\Delta_\ell w(s) + w(s))}_{H^1} \dd s\\
        &\leq \eps_0 + C \gamma \eps_0 \int_0^t \ee^{-\gamma c_0 m^3 s} \dd s \leq (1+\tilde C)\eps_0.
    \end{aligned}
    \end{equation}
    Here, we used the continuous multiplication of 
    functions $\Hev{2}(\Tbb^2)\cdot \Hev{1}(\Tbb^2)\hookrightarrow\Hev{1}(\Tbb^2)$
    for the second inequality and, for the third inequality, again Lemma \ref{lem:equiv-norms} to be able to apply the exponential decay from \eqref{eq:stable-eigenspace-exp-decay}.
    
    Note that the kernel space $V$ is always finite-dimensional and hence all norms on $V$ are equivalent. Therefore, we may as well take the $H^4$-norm on the left-hand side of \eqref{eq:stab-kernel-estimate} while still ending up with the same right-hand side up to a different constant.
    
    Putting \eqref{eq:stable-eigenspace-exp-decay} and \eqref{eq:stab-kernel-estimate} together, we end up with an estimate of the form
    \begin{equation*}
        \normb{H(t)-m}_{H^4} = \norm{u(t)}_{H^4} \leq C \eps_0 < \eps,
    \end{equation*}
    provided $\eps_0>0$ is small enough. But the continuity of $H$ in $\Hev{4}(\T^2)$ would imply $\normb{H(T_\eps)-m}_{H^4} = \eps$ if we had $T_\eps<\infty$, leading to a contradiction. Thus, we proved $T_\eps = \infty$ if $\eps_0>0$ is chosen small enough.

    It remains to prove the second part of the theorem. To this end, we integrate the evolution equation for $v$ in time from $s\geq 0$ to $t\geq s$. Again, by the fundamental theorem of calculus and using the same estimates as before in \eqref{eq:stab-kernel-estimate} we find
    \begin{equation*}
    \begin{aligned}
        \normb{v(t)-v(s)}_{L_2}
        &\leq \gamma \int_s^t\normb{P_V\left[\div_\ell\bigl((m+u(\tau))^3 \nabla_\ell(\Delta_\ell w(\tau) + w(\tau))\bigr)\right]}_{L_2} \dd \tau\\
        &\leq C \eps_0 \left(\ee^{-\gamma c_0 m^3 s} - \ee^{-\gamma c_0 m^3 t}\right),
    \end{aligned}
    \end{equation*}
    whence we conclude that $v(t)$ must converge to a single function $v_\ast\in V$ by Cauchy's convergence criterion. Moreover, we obtain the error estimate
    \begin{equation*}
        \normb{v(t)-v_\ast}_{H^4} \leq C \eps_0\,\ee^{-\gamma c_0 m^3 t}
    \end{equation*}
    for all $t>0$. Here, for convenience we again replaced the $L_2$-norm on the finite-dimensional subspace $V$ by the equivalent $H^4$-norm. Together with \eqref{eq:stable-eigenspace-exp-decay} we finally conclude
    \begin{equation*}
        \normb{H(t,\theta,\zeta) - m - v_\ast(\theta-t,\zeta)}_{H^4} = \norm{u(t) - v_\ast}_{H^4} \leq \norm{v(t)-v_\ast}_{H^4} + \norm{w(t)}_{H^4} \leq C \eps_0 \ee^{-\gamma c_0 m^3 t}.
    \end{equation*}
\end{proof}

\begin{remark}\label{rem:exp-conv-to-travelling-wave}
    Theorem \ref{thm:stability} shows that solutions starting close to constants converge exponentially fast to a travelling wave with the profile
    \begin{equation*}
        H_\ast(\theta,\zeta) = m + v_\ast(\theta,\zeta) = m + \eps_0\kappa_{-1} \cos(\theta) + \eps_0\kappa_1 \sin(\theta) + \eps_0 c \cos\bigl(\tfrac{\ell}{\pi}\zeta\bigr)
    \end{equation*}
    for some $\kappa_{-1},\kappa_1,c\in\Rbb$ and with $c=0$ if $\ell < \pi$. In the case of $c\neq 0$ we interpret $\zeta$ as a parameter and consider only cross-sections of the cylinder. We denote the cross-section profile at $\zeta$ by
    \begin{equation*}
        H_\ast(\theta) = c_\zeta + \eps_0\kappa_{-1} \cos(\theta) + \eps_0\kappa_1 \sin(\theta),
    \end{equation*}
    where $c_\zeta = m + \eps_0 c \cos\bigl(\tfrac{\ell}{\pi}\zeta\bigr)$.

    This cross-section profile $H_\ast$ describes almost a perfect circle around the shifted centre point $(-\eps_0 \kappa_{-1}, -\eps_0 \kappa_1)$ with radius $1-c_\zeta$ in the sense that the $H^4(\T)$-distance of $H_\ast$ to the shifted circle is of order $\eps_0^2$.
    In order to compute the $H^4$-distance we first assume without loss of generality that $\kappa_{-1}\geq 0$ and $\kappa_1=0$. Otherwise we could always change our coordinate system to
    \begin{equation*}
        \tilde\theta \coloneq
        \begin{cases}
            \theta - \tfrac{\pi}{2} + \arctan\bigl(\tfrac{\kappa_{-1}}{\kappa_1}\bigr) \quad &\text{if }\kappa_1>0,\\
            \theta + \tfrac{\pi}{2} + \arctan\bigl(\tfrac{\kappa_{-1}}{\kappa_1}\bigr) \quad &\text{if }\kappa_1<0,
        \end{cases}
    \end{equation*}
    where $\tilde H_\ast(\tilde\theta) = c_\zeta + \eps_0 \sqrt{\kappa_{-1}^2 + \kappa_1^2} \cos(\tilde\theta)$.

    Therefore, we consider a limit profile $H_\ast$ of the form
    \begin{equation*}
        H_\ast(\theta) \coloneq c_\zeta + \eps_0 \kappa_{-1}\cos(\theta),
    \end{equation*}
    parametrised by the angle with respect to the centre of the cylinder. Recall that $H_\ast(\theta)$ is the fluid height above the cylinder wall and that the cylinder has radius $1$.
    
    We want to compare this profile to the shifted circle with radius $1 - c_\zeta$ around $(-\eps_0\kappa_{-1}, 0)$ given by the equation
    \begin{equation*}
        (x+\eps_0\kappa_{-1})^2 + y^2 = (1-c_\zeta)^2.
    \end{equation*}
    A natural parametrisation of this circle would, of course, be given by
    \begin{equation*}
        c'(\theta') = \big(-\eps_0\kappa_{-1} + (1-c_\zeta)\cos(\theta'), (1-c_\zeta)\sin(\theta')\big),
    \end{equation*}
    however, this is parametrised by the angle $\theta'$ with respect to the shifted centre $M' = (-\eps_0\kappa_{-1}, 0)$. In order to make it comparable to $H_\ast$ we must first reparametrise it in terms of $\theta$.
    To this end, we consider the triangle between the centre point $M=(0,0)$ of the cylinder cross-section, the centre point of the shifted circle $M' \coloneq (-\eps_0\kappa_{-1}, 0)$ and an arbitrary point $P$ on the shifted circle, see Figure \ref{fig:shiftedcircles}.
\begin{figure}[ht]
    \centering
    \begin{tikzpicture}[scale=1.2]
        \coordinate (M) at (0.0, 0.0);
        \coordinate (Mprime) at (-0.8, 0.0);
        \coordinate (P) at ({-0.8+2*cos(50)}, {2*sin(50)});
        \coordinate (dummy) at (2.0, 0.0);
        \coordinate (Q) at ({(3-1-0.8*0.3021)*0.3021}, {(3-1-0.8*0.3021)*0.9533});

        \draw[black, thick] (M) circle (3);

        \draw[luh-light-blue] (Mprime) -- (P) node[left, pos=0.6] {$1-c_\zeta$};
        \draw[luh-light-blue] (Mprime) -- (M);
        \draw[luh-dark-blue] (M) -- (P) node[right, pos=0.5] {$r$};
        \draw[luh-dark-blue] (M) -- (dummy) node[below, pos=0.55] {$1-c_\zeta$};
        \draw[luh-dark-blue] (P) -- (Q);
        \pic[draw, luh-dark-blue, angle eccentricity=1.0, angle radius=0.7cm] {angle = dummy--M--P} node[anchor=south west, luh-dark-blue, xshift=1.5, yshift=-1.0] {$\theta$};
        \pic[draw, luh-light-blue, angle eccentricity=1.0, angle radius=0.8cm] {angle = M--Mprime--P} node[anchor=south east, luh-light-blue, xshift=-4.5, yshift=-1.0] {$\theta'$};
        
        \fill[luh-dark-blue] (M) circle (0.05) node[below] {$M$};
        \fill[luh-light-blue] (Mprime) circle (0.05) node[below] {$M'$};
        \fill[luh-light-blue] (P) circle (0.05) node[left, yshift=-1.5] {$P$};
        \fill[magenta] (Q) circle (0.05);

        \draw[domain=0:360, samples=60, smooth, variable=\theta, luh-dark-blue]
            plot ({(3-1)*cos(\theta)}, {(3-1)*sin(\theta)});
        \draw[domain=0:264, samples=60, smooth, variable=\theta, magenta]
            plot ({(3-1-0.8*cos(\theta))*cos(\theta)}, {(3-1-0.8*cos(\theta))*sin(\theta)})
            node[below] {$H_\ast$}
        [domain=264:360, samples=60, smooth, variable=\theta, magenta]
            plot ({(3-1-0.8*cos(\theta))*cos(\theta)}, {(3-1-0.8*cos(\theta))*sin(\theta)});
        \draw[domain=0:300, samples=60, smooth, variable=\theta, luh-light-blue]
            plot ({-0.8+2*cos(\theta)}, {2*sin(\theta)})
            node[above] {$C'$}
        [domain=300:360, samples=60, smooth, variable=\theta, luh-light-blue]
            plot ({-0.8+2*cos(\theta)}, {2*sin(\theta)});
    \end{tikzpicture}
    \caption{Cylinder cross-section (black) with limit profile $H_\ast$ (red) parametrised in $\theta$. The shifted circle $C'$ around $M'$ (light blue) must be reparametrised in terms of $\theta$ in order to compare it to the profile $H_\ast$. We assume the distance between $M$ and $M'$ to be small but for illustrational purposes it is exaggerated in this figure.}
    \label{fig:shiftedcircles}
\end{figure}

    By the law of cosines the distance $r$ between $M$ and $P$ satisfies
    \begin{equation*}
        (1-c_\zeta)^2 = r^2 + \eps_0^2 \kappa_{-1}^2 - 2r\eps_0\kappa_{-1}\cos(\pi - \theta) = r^2 + \eps_0^2\kappa_{-1}^2 + 2r\eps_0\kappa_{-1}\cos(\theta)
    \end{equation*}
    and solving for $r$ we find
    \begin{equation*}
        r(\theta) = -\eps_0\kappa_{-1}\cos(\theta) + \sqrt{(1-c_\zeta)^2 - \eps_0^2\kappa_{-1}^2\sin^2(\theta)}.
    \end{equation*}
    Functions in the sense of this paper such as $H_\ast(\theta)$ measure the height of the fluid surface above the cylinder wall rather than the distance to the centre. In order to compare $H_\ast$ to the shifted circle, we must therefore consider the function
    \begin{equation*}
        C'(\theta) \coloneq 1 - r(\theta) = 1 + \eps_0\kappa_{-1}\cos(\theta) - \sqrt{(1-c_\zeta)^2 - \eps_0^2\kappa_{-1}^2\sin^2(\theta)}.
    \end{equation*}
    For the $H^4$-distance we consequently obtain
    \begin{equation*}
    \begin{aligned}
        \normb{C' - H_\ast}_{H^4(\T)}
        &= \normB{1-c_\zeta - \sqrt{(1-c_\zeta)^2 - \eps_0^2\kappa_{-1}^2\sin^2(\theta)}}_{H^4(\T)}\\ &= \eps_0^2 \kappa_{-1}^2\normB{\tfrac{\sin^2(\theta)}{1-c_\zeta + \sqrt{(1-c_\zeta)^2 - \eps_0^2\kappa_{-1}^2\sin^2(\theta)}}}_{H^4} \leq C \eps_0^2.
    \end{aligned}
    \end{equation*}
    For the last inequality we used the estimate
    \begin{equation*}
        \normb{\tfrac{f}{g}}_{H^4} \leq C \norm{f}_{H^4} \normb{\tfrac{1}{g}}_{H^4},
    \end{equation*}
    where
    \begin{equation*}
        \bigl(\tfrac{1}{g}\bigr)'''' = \tfrac{6 (g'')^2}{g^3} + \tfrac{8 g' g'''}{g^3} - \tfrac{36 (g')^2 g''}{g^4} + \tfrac{24 (g')^4}{g^5} - \tfrac{g''''}{g^2}
    \end{equation*}
    is bounded in $L_2(\T)$ if $g$ is bounded away from zero and bounded in $H^4(\T)$. To see that the denominator $1-c_\zeta + \sqrt{(1-c_\zeta)^2 - \eps_0^2\kappa_{-1}^2\sin^2(\theta)}$ is indeed bounded in $H^4$ we use the triangle inequality and the formula
    \begin{equation*}
        \bigl(\sqrt{h}\bigr)'''' = \tfrac{1}{2} \tfrac{h''''}{\sqrt{h}} - \tfrac{3}{4} \tfrac{(h')^2}{\sqrt{h}^3} - \tfrac{h' h'''}{\sqrt{h}^3} + \tfrac{9}{4} \tfrac{(h')^2 h''}{\sqrt{h}^5} - \tfrac{15}{16}\tfrac{(h')^4}{\sqrt{h}^7}.
    \end{equation*}
    Since also the discriminant $(1-c_\zeta)^2 - \eps_0^2\kappa_{-1}^2\sin^2(\theta)$ is bounded away from zero for $\eps_0$ small enough, we conclude the result.
\end{remark}

In particular, if $\eps_0$ is of the same order of magnitude as the ratio of the mean fluid height to the cylinder radius, the error will be negligible and after the lubrication approximation they become indistinguishable.

Therefore, a solution $H$ starting close to a constant converges exponentially to a situation where in each cross-section the free surface is a circle with constant radius whose centre point rotates around the cylinder axis exactly once per full rotation of the cylinder. The asymptotic dynamics in a different cross-section will be exactly the same if $\ell<\pi$ or differ only by the radius of the circle according to $1-c_\zeta$ if $\ell=\pi$.


\subsection{Positive stationary solutions for small $\delta>0$} \label{sec:stability_delta_small}

Given the results of the previous subsection for $\delta = 0$ (no gravity), we now investigate stability and instability properties of perturbed steady states obtained for positive but small $\delta >0$, that is, for solutions $H\in\Hev{4}(\T^2)$ to
\begin{equation}\label{eq:perturbed-steady-states-equ}
    H_{\theta} + \gamma\,\div_\ell \big(H^3 \nabla_\ell (\Delta_\ell H + H)\big) = \delta \big(H^3 \cos(\theta)\big)_\theta.
\end{equation}
For the one-dimensional analogue
\begin{equation*}
    H_{\theta} + \gamma \big(H^3 (H_{\theta\theta\theta} + H_\theta)\big)_\theta = \delta \big(H^3 \cos(\theta)\big)_\theta
\end{equation*}
it has been proved in \cite{jlv2023} by means of the implicit function theorem, that for positive but small $\delta$ there exist locally unique stationary solutions $H_\delta(\theta) = m + \delta m^3 \cos(\theta) + \Ocal(\delta^2)$. It turns out that these $H_\delta$ are also solutions to the two-dimensional problem \eqref{eq:perturbed-steady-states-equ}. Remarkably, according to the following theorem, no further solutions can exist in a local neighbourhood of the constant solutions $H\equiv m$ in the case $\delta = 0$. In the case $\tfrac{\ell}{\pi}\not\in\Z$, this is a direct consequence of the implicit function theorem. On the other hand, for $\tfrac{\ell}{\pi}\in\Z$, the stationary solutions for $\delta = 0$ are not unique (cf. Lemma \ref{lem:steady-states_delta=0}) and hence the implicit function theorem is no longer applicable. However, since for $\tfrac{\ell}{\pi}\in\Z$, the (linearised) operator $u\mapsto u_\theta + \gamma m^3 \Delta_\ell (\Delta_\ell u + u)$ has a one-dimensional kernel $\mathrm{span}\big\{\cos\big(\tfrac{\ell}{\pi}\zeta\big)\big\}$ and an image with co-dimension one, we can instead apply the Lyapunov--Schmidt reduction. Hence, in a neighbourhood of the constant solutions $H\equiv m$ the problem is reduced to a one-dimensional equation on $\mathrm{span}\big\{\cos\big(\tfrac{\ell}{\pi}\zeta\big)\big\}$. The uniqueness of solutions to this one-dimensional problem can only be observed by a Taylor expansion to third order.

\begin{theorem}[Positive steady states for $0<\delta\ll 1$]\label{thm:ift_steady_states}\label{thm:perturbed-steady-states}
     For a fixed mass $m>0$, there exists $\delta_0>0$ such that for all $0\leq\delta<\delta_0$ there exists a stationary solution $H_\delta \in \Hev{4}(\T^2)$ satisfying $\tfrac{1}{4\pi^2}\int_{\T^2} H_\delta \dd\theta\dd\zeta = m$. Moreover, it has the following properties:
    \begin{enumerate}
        \item[(i)] $H_\delta$ is unique in a sufficiently small neighbourhood of $m$,
        \item[(ii)] $H_\delta$ is independent of $\zeta$,
        \item[(iii)] The curve $\delta\mapsto H_\delta\in H^4(\T)$ is continuously differentiable,
        \item[(iv)] $H_\delta$ can be expanded in $\delta$ to
        \begin{equation*}
            H_\delta(\theta) = m + \delta m^3 \cos(\theta) + \delta^2 \left(\frac{3m^5}{2(1 + 36 \gamma^2 m^6)} \cos(2\theta) - \frac{9\gamma m^8}{1 + 36\gamma^2 m^6}\sin(2\theta)\right) + \Ocal(\delta^3).
        \end{equation*}
    \end{enumerate}  
\end{theorem}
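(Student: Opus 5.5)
We split into the cases $\tfrac{\ell}{\pi}\notin\Zbb$ and $\tfrac{\ell}{\pi}\in\Zbb$. In both we write $H=m+u$ with $u\in\Hevd{4}(\T^2)$, which fixes the mass, and set
\begin{equation*}
    \Phi(u,\delta)\coloneq u_\theta+\gamma\,\div_\ell\bigl((m+u)^3\nabla_\ell(\Delta_\ell u+u)\bigr)-\delta\bigl((m+u)^3\cos(\theta)\bigr)_\theta .
\end{equation*}
This map takes $\Hevd{4}(\T^2)\times\Rbb$ into $\Levd{2}(\T^2)$; its range has zero mean because every term is a divergence. By the multiplication estimates used in the proof of Theorem \ref{thm:local_existence}, $\Phi$ is smooth, in fact polynomial in $u$. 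Its linearisation at $(0,0)$ is $L_0u=u_\theta+\gamma m^3\Delta_\ell(\Delta_\ell u+u)$, with Fourier symbol
\begin{equation*}
    \ii k-\gamma m^3\lambda_{k,l}(\lambda_{k,l}-1),\qquad \lambda_{k,l}=k^2+\tfrac{\pi^2}{\ell^2}l^2 .
\end{equation*}
For $k\neq0$ the imaginary part is nonzero. For $k=0$ the symbol vanishes only if $\lambda_{0,l}=1$, i.e.\ $l=\pm\tfrac{\ell}{\pi}$. Away from these modes the symbol is bounded below by $c\,(1+k^2+l^2)^2$.

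\textbf{Case $\tfrac{\ell}{\pi}\notin\Zbb$.} Here $L_0\colon\Hevd{4}\to\Levd{2}$ is an isomorphism, so the implicit function theorem gives a unique $C^1$ curve $\delta\mapsto u(\delta)$ with $\Phi(u(\delta),\delta)=0$ near $0$. This yields (i) and (iii). For (ii), the $\zeta$-independent stationary solutions of \cite{jlv2023} solve the same equation and are small, so by local uniqueness they coincide with $m+u(\delta)$. For (iv), we insert $u=\delta u_1+\delta^2u_2+\Ocal(\delta^3)$. At order $\delta$ the equation is $L_0u_1=m^3(\cos\theta)_\theta$. Since $\Delta_\ell(\Delta_\ell+1)\cos\theta=0$, we get $u_1=m^3\cos\theta$. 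At order $\delta^2$ we obtain
\begin{equation*}
    L_0u_2=3m^2\bigl(u_1\cos\theta\bigr)_\theta-3\gamma m^2\,\div_\ell\bigl(u_1\nabla_\ell(\Delta_\ell u_1+u_1)\bigr),
\end{equation*}
and the last term vanishes because $\Delta_\ell u_1+u_1=0$. The right-hand side is therefore $-3m^5\sin2\theta$. Solving on the $k=\pm2$ modes, where the symbol is $\pm2\ii-12\gamma m^3$, gives the stated coefficients $\tfrac{3m^5}{2(1+36\gamma^2m^6)}$ and $-\tfrac{9\gamma m^8}{1+36\gamma^2m^6}$.

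\textbf{Case $n\coloneq\tfrac{\ell}{\pi}\in\Zbb$.} Now $\ker L_0=\mathrm{span}\{\cos(n\zeta)\}$. On $k=0$ the symbol is real, so the range of $L_0$ is the $L_2$-orthogonal complement of $\cos(n\zeta)$ in $\Levd{2}$. We perform a Lyapunov--Schmidt reduction. Write $u=a\cos(n\zeta)+w$ with $w\perp\cos(n\zeta)$ and let $Q$ be the projection onto the range. The implicit function theorem applied to $Q\Phi=0$ gives a unique smooth $w=w(a,\delta)$ with $w(0,0)=0$. Since the family $m+a\cos(n\zeta)$ consists of steady states at $\delta=0$ (Lemma \ref{lem:steady-states_delta=0}), we have $w(a,0)=0$. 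The remaining scalar bifurcation function is
\begin{equation*}
    g(a,\delta)\coloneq\bigl(\Phi(a\cos(n\zeta)+w(a,\delta),\delta),\cos(n\zeta)\bigr)_{L_2}.
\end{equation*}
It vanishes identically at $\delta=0$. The shift $\zeta\mapsto\zeta+\pi/n$ preserves evenness in $\zeta$ and maps $a\mapsto-a$, so $g$ is odd in $a$. The $\zeta$-independent solutions above give the zero $a=0$. Consequently
\begin{equation*}
    g(a,\delta)=a\,\delta\,G(a,\delta),\qquad G(a,\delta)=\beta\,\delta+\alpha\,a^2+\text{higher order},
\end{equation*}
where any term $\delta^0a^0$ in $G$ must be checked to vanish. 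It is exactly this degeneracy that forces the expansion to third order.

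The main obstacle is computing $\alpha$ and $\beta$ and showing that they are nonzero and of the same sign, or more generally that $G\neq0$ for small $a\neq0$ and small $\delta>0$. This gives $a=0$ as the only zero, hence (i); (ii) and (iii) then follow as before, and (iv) is the same computation as in the first case. To get $\beta$ we expand $w(a,\delta)$ to order $a\delta^2$, using $u_1,u_2$ from the first case together with the interaction of $\cos(n\zeta)$ with $\cos\theta$ and $\cos2\theta,\sin2\theta$. To get $\alpha$ we expand $w$ to order $a^3\delta$, using the cross terms from the cubic nonlinearity $(m+u)^3$ in both the capillary term and the gravity term, and then project onto $\cos(n\zeta)$. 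I would first compute $\partial_a\partial_\delta g(0,0)$ to confirm it vanishes, then $\partial_a\partial_\delta^2g(0,0)$ and $\partial_a^3\partial_\delta g(0,0)$. If $\alpha=0$ or the signs conflict, one has to go to the next order.
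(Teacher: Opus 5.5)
Your treatment of $\tfrac{\ell}{\pi}\notin\Zbb$ and your Lyapunov--Schmidt set-up for $n=\tfrac{\ell}{\pi}\in\Zbb$ coincide with the paper's. However, the step that closes the case $n\in\Zbb$ is missing, and the route you sketch for it cannot work. You expect $g(a,\delta)=\beta a\delta^2+\alpha a^3\delta+\dots$ with $\alpha,\beta\neq 0$ of equal sign. In fact $\alpha=0$. More strongly, $\partial_\delta g(a,0)=0$ for \emph{every} small $a$, so all coefficients of $a^{2j+1}\delta$ vanish and your fallback of going to the next order in $a$ never terminates.

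The missing ingredient is a $\theta$-mode selection argument (the paper's Lemma~\ref{lem:properties-v}(iv) and Corollary~\ref{cor:properties-f}(iv)). Since $\Delta_\ell\cos(n\zeta)+\cos(n\zeta)=0$, the linearisation at the $\delta=0$ steady state $m+a\cos(n\zeta)$ is
\begin{equation*}
    D_u\Phi\bigl(a\cos(n\zeta),0\bigr)\phi=\phi_\theta+\gamma\,\div_\ell\bigl((m+a\cos(n\zeta))^3\nabla_\ell(\Delta_\ell\phi+\phi)\bigr).
\end{equation*}
Its coefficients depend on $\zeta$ only, so it maps each $\theta$-Fourier mode to itself, and so does $Q$. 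Differentiate $Q\Phi\bigl(a\cos(n\zeta)+w(a,\delta),\delta\bigr)=0$ in $\delta$ at $\delta=0$. The forcing $\bigl((m+a\cos(n\zeta))^3\cos\theta\bigr)_\theta$ lives in the modes $\ee^{\pm\ii\theta}$, hence so does $\partial_\delta w(a,0)$. Consequently $\partial_\delta g(a,0)$ is the $L_2$-pairing of a function supported on the modes $\ee^{\pm\ii\theta}$ with the $\theta$-independent function $\cos(n\zeta)$, i.e.\ zero.

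You already have $g(a,0)=0$ and $g(0,\delta)=0$. Together with the identity above, this gives $g(a,\delta)=a\delta^2\,\tilde G(a,\delta)$ with $\tilde G$ analytic and $\tilde G(0,0)=\tfrac12\partial_a\partial_\delta^2 g(0,0)$. So only $\beta\neq0$ is needed; the paper computes the corresponding quantity $\partial_a\partial_\delta^2 f(0,0)$ and finds it positive. That computation is lighter than you plan. Only $\partial_\delta w(0,0)=m^3\cos\theta$ and $\partial_a\partial_\delta w(0,0)$ enter, the latter being the $\ee^{\pm\ii\theta}\cos(n\zeta)$-solution of $L_0\psi=-3m^2\cos(n\zeta)\sin\theta$. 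The term $u_2$ drops out because it is $\zeta$-independent, and $\partial_a\partial_\delta^2 w(0,0)$ drops out because the range of $L_0$ is orthogonal to $\cos(n\zeta)$.

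With this factorisation you also no longer need parity in $a$. This matters, because your claim that $\zeta\mapsto\zeta+\pi/n$ preserves evenness fails for $n\geq2$: for $n=2$ it sends $\cos\zeta$ to $-\sin\zeta$. The claim holds only on the invariant subspace of even functions with $\zeta$-modes in $n\Zbb$, which contains $w(a,\delta)$ by uniqueness.

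Finally, there is a sign slip. The symbol of $L_0$ is $\ii k+\gamma m^3\lambda_{k,l}(\lambda_{k,l}-1)$. With your sign the $k=\pm2$ symbol is $\pm2\ii-12\gamma m^3$, which would give $+\tfrac{9\gamma m^8}{1+36\gamma^2m^6}\sin 2\theta$. The coefficients in (iv) come from $\pm2\ii+12\gamma m^3$.
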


We divide the proof into two parts as announced above: In the first part, we prove existence and properties (i)--(iv) by the implicit function theorem in the case $\tfrac{\ell}{\pi}\not\in\Z$. In the second part, we use a Lyapunov--Schmidt reduction to extend the results to the case $\tfrac{\ell}{\pi}\in\Z$.

\begin{proof}[Proof of Theorem \ref{thm:perturbed-steady-states} in the case $\tfrac{\ell}{\pi}\not\in\Z$]
We start by proving existence and uniqueness in $\Hev{4}(\T^2)$ by means of the implicit function theorem and then refer to the one-dimensional problem to conclude the independence of $\zeta$. Finally, we compute the $\delta$-expansion.

\medskip
{\noindent\bfseries Step 1: Existence and uniqueness in $\Hev{4}(\T^2)$. } The proof makes use of the implicit function theorem. We separate the fixed mass $m$ from the solution and define the function $F:\Hevd{4}(\T^2)\times\R\to\Levd{2}(\T^2)$ by
    \begin{equation*}
        F(u,\delta) = u_\theta + \gamma\,\div_\ell \left((m+u)^3 \nabla_\ell(\Delta_\ell u + u)\right) - \delta\left((m+u)^3 \cos(\theta)\right)_\theta.
    \end{equation*}
    This function satisfies $F(0,0) = 0$ and it is continuously Fréchet differentiable with
    \begin{equation*}
    \begin{aligned}
        D_u F(u,\delta) \phi =&\, \phi_\theta + \gamma\,\div_\ell\left((m+u)^3 \nabla_\ell(\Delta_\ell \phi + \phi)\right)\\
        &+ 3\gamma\,\div_\ell\left((m+u)^2\phi\nabla_\ell(\Delta_\ell u + u)\right) - 3 \delta \left((m+u)^2 \phi \cos(\theta)\right)_\theta.
    \end{aligned}
    \end{equation*}
    At $u=0$, $\delta=0$ this linear operator becomes
    \begin{equation*}
        D_u F(0,0)\phi = \phi_\theta + \gamma m^3 \Delta_\ell(\Delta_\ell \phi + \phi)
    \end{equation*}
    with Fourier representation
    \begin{equation*}
        \left(D_u F(0,0)\phi\right)_{k,l} = \left(\ii k + \gamma m^3\bigl(k^2 + \tfrac{\pi^2}{\ell^2}l^2\bigr)\bigl(k^2 + \tfrac{\pi^2}{\ell^2}l^2 - 1\bigr)\right) \phi_{k,l}.
    \end{equation*}
    Since $\tfrac{\ell}{\pi}\not\in\Z$ the factor in front of $\phi_{k,l}$ vanishes only for $k=l=0$ and consequently $F_u(0,0):\Hevd{4}(\T^2)\to\Levd{2}(\T^2)$ is bijective with bounded inverse. Now, by the implicit function theorem, there exists a $\delta_0>0$ and a continuously differentiable curve $u\in C^1\bigl((-\delta_0,\delta_0);\Hevd{4}(\T^2)\bigr)$ such that
    \begin{equation*}
        F(u(\delta),\delta)=0
    \end{equation*}
    for all $\delta\in(-\delta_0,\delta_0)$. Defining $H_\delta\coloneq m + u(\delta)$ we have proved existence and properties (i) and (iii).

    \medskip
    {\noindent\bfseries Step 2: Independence of $\zeta$. } We show property (ii). It has already been proved in the one-dimensional case (cf. \cite{jlv2023}) that there exist solutions $H_\delta = H_\delta(\theta)$ of the one-dimensional analogue
    \begin{equation*}
        H_\theta + \gamma \big(H^3 (H_{\theta\theta\theta} + H_\theta)\big)_\theta = \delta \big(H^3 \cos(\theta)\big)_\theta,
        \quad \theta\in\T.
    \end{equation*}
    These are also solutions for the two-dimensional steady-state equation \eqref{eq:PDE_full_stationary}. By the uniqueness proved above it follows that the general solutions $H_\delta$ obtained in Step 1 coincide, in fact, with the one-dimensional solutions and hence they are independent of $\zeta$.

    \medskip
    {\noindent\bfseries Step 3: Expansion of $H_\delta$. } We plug the ansatz $H_\delta = m + \delta H_1 + \delta^2 H_2 + \Ocal(\delta^3)$ into
    \begin{equation*}
        H_{\theta} + \gamma\,\div_\ell\big(H^3\nabla_\ell (\Delta_\ell H + H)\big) - \delta\big(H^3\cos(\theta)\big)_\theta = 0,
        \quad \theta\in\T.
    \end{equation*}
    To order $\delta$ we hence obtain
    \begin{equation*}
        H_{1,\theta} + \gamma m^3 \Delta_\ell (\Delta_\ell H_1 + H_1) - \big(m^3 \cos(\theta)\big)_\theta = 0, \quad \theta\in\T,
    \end{equation*}
    with solution
    \begin{equation*}
        H_1(\theta) = m^3 \cos(\theta)
    \end{equation*}
    Then, to order $\delta^2$ we find
    \begin{equation*}
        H_{2,\theta} + \gamma m^3 \Delta_\ell (\Delta_\ell H_2 + H_2 ) + 6 m^5 \sin(\theta)\cos(\theta) = 0, \quad\theta\in\T,
    \end{equation*}
    with solution
    \begin{equation*}
        H_2(\theta) = \frac{3m^5}{2(1 + 36 \gamma^2 m^6)} \cos(2\theta) - \frac{9\gamma m^8}{1 + 36\gamma^2 m^6}\sin(2\theta).
    \end{equation*}    
\end{proof}

For the case $\tfrac{\ell}{\pi}\in\Z$ we carry out a Lyapunov--Schmidt reduction. To this end, we start by outlining the strategy and introducing the functional setting. We define a function $F:\Hevd{4}(\T^2)\times\R\to\Levd{2}(\T^2)$ by
    \begin{equation*}
        F(u,\delta) \coloneq u_\theta + \gamma \div_\ell \big((m+u)^3 \nabla_\ell (\Delta_\ell u + u)\big) - \delta \big((m+u)^3 \cos(\theta)\big)_\theta
    \end{equation*}
    such that $m + u$ is a steady state of \eqref{eq:perturbed-steady-states-equ} if and only if $F(u,\delta) = 0$. Then, we apply a Lyapunov--Schmidt reduction: Linearising $F$ around $u=0$ and $\delta=0$ we obtain the linearised operator
    \begin{equation} \label{eq:linearised_op}
        L u \coloneq u_\theta + \gamma m^3 \Delta_\ell (\Delta_\ell u + u).
    \end{equation}
    Then, $L$ has kernel $\mathrm{span}\{\cos\big(\tfrac{\ell}{\pi}\zeta\big)\}$ and its image is orthogonal to $\cos\big(\tfrac{\ell}{\pi}\zeta\big)$. This motivates the decomposition
    \begin{equation*}
    \begin{aligned}
        \Hevd{4}(\T^2) &= \mathrm{span}\big\{\cos\big(\tfrac{\ell}{\pi}\zeta\big)\big\} \oplus \big(\cos\big(\tfrac{\ell}{\pi}\zeta\big)\big)^\perp \simeq: \R\times V\\
        \Levd{2}(\T^2) &= \mathrm{span}\big\{\cos\big(\tfrac{\ell}{\pi}\zeta\big)\big\} \oplus \big(\cos\big(\tfrac{\ell}{\pi}\zeta\big)\big)^\perp \simeq: \R\times W.
    \end{aligned}
    \end{equation*}
    We denote by $P_V$ and $P_W$ the orthogonal projections onto $V$ and $W$, respectively (in the $\Levd{2}$-space).
    The idea is to solve the problem first in the infinite dimensional space $W$ by means of the implicit function theorem. Subsequently, the problem can be reduced to a single equation in the one-dimensional space $\mathrm{span}\left\{\cos\big(\tfrac{\ell}{\pi}\zeta\big)\right\}$.
    
    Indeed, we observe that a solution $u$ to $F(u,\delta) = 0$ must satisfy
    \begin{equation} \label{eq:decomposed_equ}
        P_W F(u,\delta) = 0
        \quad \text{and} \quad
        (\mathrm{Id}-P_W) F(u,\delta) = 0.
    \end{equation}
    Any solution $u$ to the first equation in \eqref{eq:decomposed_equ} can be decomposed into $u = a \cos\big(\frac{\ell}{\pi}\zeta\big) + v$ with $a \in \R$ and $v \in V$.
    Since the linearised operator $L$ in \eqref{eq:linearised_op} is a Banach space isomorphism between $V$ and $W$, we may apply the implicit function theorem to conclude that there exists $\eps_0 > 0$ such that for every $a, \delta \in (-\eps_0,\eps_0)$, the equation
    \begin{equation}\label{eq:unique_v}
        P_W F\big(a \cos\big(\tfrac{\ell}{\pi}\zeta\big) + v, \delta\big) = 0
    \end{equation}
    admits a unique solution $v=v(a,\delta)$. Here, the function $v\colon (-\eps_0,\eps_0)\times(-\eps_0,\eps_0) \to V$ is  analytic. With this decomposition, the problem $F(u, \delta) = 0$ reduces to finding $a \in (-\eps_0,\eps_0)$ such that $u = a \cos\big(\frac{\ell}{\pi}\zeta\big) + v(a,\delta)$ and $u$ satisfies the second equation of \eqref{eq:decomposed_equ}, i.e.
    \begin{equation}\label{eq:def_f}
        (\mathrm{Id} - P_W)u = a \quad\text{and}\quad f(a,\delta) \coloneq (\mathrm{Id}-P_W)F(a\cos(z)+v(a,\delta),\delta) = 0.
    \end{equation}
    In particular, for fixed $\delta \in (-\eps_0,\eps_0)$  there exists a solution $u$ to $F(u,\delta) = 0$ if and only if
    \begin{equation*}
        \exists a\in(-\eps_0,\eps_0)\text{ s.t. } f(a,\delta) = 0.
    \end{equation*}
    We will show below that, for fixed $0 < \delta < \eps_0$, this can only be true if $a=0$.

\medskip
Before we are in a position to complete the proof of Theorem \ref{thm:perturbed-steady-states}, we need to prove the following auxiliary results on the functions $v$ and $f$, introduced in \eqref{eq:unique_v} and \eqref{eq:def_f}, respectively.

\begin{lemma}\label{lem:properties-v}
    Let $v\colon (-\eps_0,\eps_0)^2 \to V$ be the function defined in defined in \eqref{eq:unique_v}. Then, for all $k\in\N_0$ we have:
    \begin{enumerate}
        \item[(i)] $\partial_a^k v(0,0) \equiv 0$,
        \item[(ii)] $\partial_\delta^k v(0,0)$ is independent of $\zeta$,
        \item[(iii)] $v(-a,\delta)(\theta,\zeta) = v(a,\delta)\big(\theta,\zeta + \tfrac{\pi^2}{\ell}\big)$,
        \item[(iv)] there exists a function $\phi = \phi(\zeta)$ depending on $\zeta$ only such that
        \begin{equation*}
            \partial_a^k\partial_\delta v(0,0)(\theta,\zeta) = \phi(\zeta)\ee^{\ii\theta} + \bar\phi(\zeta)\ee^{-\ii\theta}.
        \end{equation*}
    \end{enumerate}
\end{lemma}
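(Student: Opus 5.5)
The plan is to read off all four properties from the defining identity \eqref{eq:unique_v}, $P_W F\big(a\cos(\tfrac{\ell}{\pi}\zeta)+v(a,\delta),\delta\big)=0$, using three ingredients: the local uniqueness of $v(a,\delta)$ from the implicit function theorem, symmetries of $F$ that commute with $P_W$, and differentiation of \eqref{eq:unique_v} at $(0,0)$. Throughout, $L$ from \eqref{eq:linearised_op} is an isomorphism $V\to W$, and $L$, $P_W$ and $F$ commute with translations in $\zeta$.

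\emph{Properties (i) and (ii).} At $\delta=0$, Lemma \ref{lem:steady-states_delta=0} shows that $a\cos(\tfrac{\ell}{\pi}\zeta)$ solves $F(\cdot,0)=0$ for every small $a$. Hence $v=0$ solves \eqref{eq:unique_v} when $\delta=0$, and local uniqueness gives $v(a,0)\equiv 0$. Differentiating $k$ times in $a$ yields (i). For (ii), at $a=0$ the equation reduces to $P_WF(v,\delta)=0$. By Step 2 of the case $\tfrac{\ell}{\pi}\notin\Z$ (the one-dimensional solutions of \cite{jlv2023}), there is a solution $H_\delta-m$ that is independent of $\zeta$ and solves $F=0$, hence also $P_WF=0$. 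It is orthogonal to $\cos(\tfrac{\ell}{\pi}\zeta)$, so it lies in $V$. Uniqueness then gives $v(0,\delta)=H_\delta-m$. An alternative that avoids the reference: the subspace of $\zeta$-independent functions is invariant under $F$, and the implicit function theorem can be applied inside it. Either way, every $\partial_\delta^k v(0,0)$ is independent of $\zeta$.

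\emph{Property (iii).} Use the shift $\zeta\mapsto\zeta+\tfrac{\pi^2}{\ell}$. Under it, $\cos(\tfrac{\ell}{\pi}\zeta)\mapsto\cos(\tfrac{\ell}{\pi}\zeta+\pi)=-\cos(\tfrac{\ell}{\pi}\zeta)$. The shift preserves evenness in $\zeta$ up to a reflection, and it commutes with $F$, $P_W$ and $\div_\ell$. So if $v(a,\delta)$ solves \eqref{eq:unique_v} with parameter $a$, its shifted version solves it with parameter $-a$. Uniqueness gives (iii). One point needs checking: that the shifted function is again even in $\zeta$. This holds because $\zeta\mapsto -\zeta-\tfrac{\pi^2}{\ell}$ is a symmetry of $\cos(\tfrac{\ell}{\pi}\zeta)$ modulo $2\pi$ when $\tfrac{\ell}{\pi}\in\Z$, up to the same sign, and this should be verified.

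\emph{Property (iv): the main obstacle.} Differentiate \eqref{eq:unique_v} once in $\delta$ and $k$ times in $a$ at $(0,0)$. By (i), all terms containing only $a$-derivatives of $v$ drop out. We expect a recursive identity of the form
\[
L\,\partial_a^k\partial_\delta v(0,0)=P_W\Big[3\big(\partial_a^k (a\cos(\tfrac{\ell}{\pi}\zeta))^{\cdot}\ \text{terms}\big)\Big]+\big(\text{gravity term } -\partial_\delta\,((m+u)^3\cos\theta)_\theta\big),
\]
whose right-hand side consists of products of powers of $\cos(\tfrac{\ell}{\pi}\zeta)$, which depend only on $\zeta$, and either $\cos\theta$ from the gravity term or $\partial_a^j\partial_\delta v(0,0)$ with $j<k$. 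Proceed by induction on $k$. The base case is $\partial_\delta v(0,0)=m^3\cos\theta$. In the inductive step, every forcing term is a linear combination of $\zeta$-functions times $e^{\pm\ii\theta}$. The reason is that the only $\theta$-dependence enters linearly, through $\cos\theta$ or through $\partial_a^j\partial_\delta v$, while all other factors are $\zeta$-only derivatives of $v$ at $\delta=0$, which vanish or are $\cos(\tfrac{\ell}{\pi}\zeta)$. Since $L$ and $P_W$ act diagonally on Fourier modes, they preserve the class of functions with $\theta$-frequencies in $\{\pm1\}$. Real-valuedness then forces the coefficient of $e^{-\ii\theta}$ to be the conjugate $\bar\phi$. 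The delicate part is the bookkeeping: one must confirm that no term quadratic in $\delta$-derivatives appears (which is true, since only one $\delta$-derivative is taken) and that the mixed terms from the nonlinearity $(m+u)^3\nabla_\ell(\Delta_\ell u+u)$ respect the mode structure.
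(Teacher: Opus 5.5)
Your proposal is correct and follows the paper's proof essentially step for step: (i) via $v(a,0)\equiv 0$ and uniqueness, (ii) via $v(0,\delta)=H_\delta-m$, (iii) via the shift $\zeta\mapsto\zeta+\tfrac{\pi^2}{\ell}$ and uniqueness, and (iv) by induction after one $\delta$-derivative, using that all $a$-derivatives of $\Delta_\ell u+u$ vanish at $\delta=0$ and that the $3\delta(\cdots)$ term drops out.

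The evenness issue you flag in (iii) is also passed over in the paper, and the reason you give does not resolve it. Write $n=\tfrac{\ell}{\pi}$. The shift by $\pi/n$ maps an even function to an even one only if that function is $\tfrac{2\pi}{n}$-periodic in $\zeta$; for instance, when $n=2$ the shift sends $\cos\zeta$ to $-\sin\zeta$. Periodicity does hold for $v(a,\delta)$. To see this, run the implicit function theorem in the subspace of functions whose $\zeta$-frequencies lie in $n\Z$. This subspace contains $\cos(n\zeta)$ and is invariant under $F$, $L$ and $P_W$, so the theorem applies there, and uniqueness in the full space identifies its solution with $v(a,\delta)$. For $\ell=\pi$ there is nothing to check.
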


\begin{proof}
    {\noindent\bfseries Property (i): }
    This is a direct consequence of the characterisation of steady states in Lemma \ref{lem:steady-states_delta=0} for $\delta = 0$. Indeed, since $m + a\cos\big(\tfrac{\ell}{\pi}\zeta\big)$ is a steady state, we have $F\big(a\cos\big(\tfrac{\ell}{\pi}\zeta\big), 0\big) = 0$ for all $a$ and hence $v(a, 0)\equiv 0$ solves equation \eqref{eq:unique_v}.

    \medskip
    {\noindent\bfseries Property (ii): }
    This follows from Theorem \ref{thm:ift_steady_states} on steady states in the case $\tfrac{\ell}{\pi}\not\in\Z$. Note that the stationary solution $H_\delta$ constructed there is independent of $\zeta$ and thus solves the steady-state equation \eqref{eq:perturbed-steady-states-equ} also in the case $\tfrac{\ell}{\pi}\in\Z$. Hence, we conclude $v(0,\delta) = H_\delta - m$, the function being in particular independent of $\zeta$.

    \medskip
    {\noindent\bfseries Property (iii): } By definition of $v$ we have
    \begin{equation*}
        P_W F\big(-a \cos\big(\tfrac{\ell}{\pi} \zeta\big) + v(-a,\delta)(\theta,\zeta),\delta\big) 
        =
        0.
    \end{equation*}
    Since $-a \cos\big(\tfrac{\ell}{\pi}\zeta\big) = a \cos\big(\tfrac{\ell}{\pi}\big(\zeta + \tfrac{\pi^2}{\ell}\big)\big)$, we find
    \begin{equation*}
        P_W F\big(a\cos\big(\tfrac{\ell}{\pi}\big(\zeta + \tfrac{\pi^2}{\ell}\big)\big) + v(-a,\delta)(\theta,\zeta),\delta\big)
        =
        0.
    \end{equation*}
    Again by the definition of $v$ we have
    \begin{equation*}
         P_W F\big(a\cos\big(\tfrac{\ell}{\pi}\big(\zeta + \tfrac{\pi^2}{\ell}\big)\big) + v(a,\delta)\big(\theta,\zeta+\tfrac{\pi^2}{\ell}\big),\delta\big)
        =
        0.
    \end{equation*}
    and we conclude 
    $v(-a,\delta)(\theta,\zeta) = v(a,\delta)\big(\theta,\zeta + \tfrac{\pi^2}{\ell}\big)$.

    \medskip
    {\noindent\bfseries Property (iv): } We prove this property by induction over $k$. For $k=0$ we differentiate
    \begin{equation*}
        P_W F\big(a \cos\big(\tfrac{\ell}{\pi}\zeta\big) + v(a,\delta), \delta\big) = 0
    \end{equation*}
    with respect to $\delta$ and obtain
    \begin{equation}\label{eq:lyapunov-schmidt-deriv-delta}
    \begin{aligned}
        P_W \Big[&\big(\partial_\delta v\big)_\theta + \gamma\,\div_\ell\Big(\big(m + a\cos\big(\tfrac{\ell}{\pi}\zeta\big) + v\big)^3\, \nabla_\ell \big(\Delta_\ell \partial_\delta v + \partial_\delta v\big)\Big)\\
        &+ 3\gamma\,\div_\ell\Big(\big(m + a\cos\big(\tfrac{\ell}{\pi}\zeta\big) + v\big)^2\, \partial_\delta v\, \nabla_\ell \big(\Delta_\ell v + v\big)\Big)\\
        &- 3\delta\Big(\big(m + a\cos\big(\tfrac{\ell}{\pi}\zeta\big) + v\big)^2\, \partial_\delta v \,\cos(\theta)\Big)_\theta\\
        &- \Big(\big(m + a\cos\big(\tfrac{\ell}{\pi}\zeta\big) + v\big)^3 \cos(\theta)\Big)_\theta\Big] = 0.
    \end{aligned}
    \end{equation}
    Then, evaluating at $a = \delta = 0$ yields for $\partial_\delta v(0,0)\in V$ the partial differential equation
    \begin{equation*}
        \big(\partial_\delta v(0,0)\big)_\theta + \gamma\, m^3 \Delta_\ell \big(\Delta_\ell \partial_\delta v(0, 0) + \partial_\delta v(0,0)\big) = - m^3 \sin(\theta)\in W
    \end{equation*}
    with unique solution
    \begin{equation}\label{eq:lyapunov-schmidt-v_delta}
        \partial_\delta v(0,0) = m^3 \cos(\theta).
    \end{equation}
    In particular, property (iv) is satisfied for $k = 0$.

    Now, let $k\in\N_0$ be fixed and we assume that property (iv) is true for all $l\leq k$, i.e.
    \begin{equation*}
        \partial_a^l \partial_\delta v(0,0) = \phi(\zeta)\ee^{\ii\theta} + \bar\phi(\zeta)\ee^{-\ii\theta},
        \quad l\leq k,
    \end{equation*}
    with the function $\phi$ possibly depending on $l$. We differentiate equation \eqref{eq:lyapunov-schmidt-deriv-delta} $(k+1)$-times with respect to $a$ and evaluate at $a = \delta = 0$. Note that the second line of \eqref{eq:lyapunov-schmidt-deriv-delta} can be omitted when differentiating with respect to $a$ since any $a$-derivatives of $\nabla_\ell (\Delta_\ell v + v)$ will always evaluate to zero by property (i). Similarly, the third line may be omitted because of the prefactor $\delta$. For $\partial_a^{k+1} \partial_\delta v(0,0)$ we thus obtain the partial differential equation
    \begin{equation}\label{eq:lyapunov-schmidt-an-delta}
    \begin{aligned}
        &\big(\partial_a^{k+1} \partial_\delta v(0,0)\big)_\theta + \gamma m^3 \Delta_\ell \big(\Delta_\ell \partial_a^{k+1} \partial_\delta v(0,0) + \partial_a^{k+1} \partial_\delta v(0,0)\big)\\
        &= -\gamma\sum_{l=0}^k \binom{k+1}{l} P_W\Big[ \div_\ell\Big(\partial_a^{k+1-l}\big(\big(m + a\cos\big(\tfrac{\ell}{\pi}\zeta\big) + v\big)^3\big) \nabla_\ell\big(\Delta_\ell \partial_a^l \partial_\delta v + \partial_a^l \partial_\delta v\big)\Big)\Big]\bigg\vert_{a=\delta=0}\\
        &+ P_W\Big[\Big(\partial_a^{k+1} \big(\big(m + a\cos\big(\tfrac{\ell}{\pi}\zeta\big) + v\big)^3\big) \cos(\theta)\Big)_\theta\Big]\bigg\vert_{a=\delta=0}.
    \end{aligned}
    \end{equation}
    Since $\partial_a^n v(0,0) = 0$ for all $n\in\N_0$ by property (i), we deduce that
    \begin{equation}\label{eq:lyapunov-schmidt-indep-theta}
        \partial_a^{n} \Big(\big(m + a\cos\big(\tfrac{\ell}{\pi}\zeta\big) + v\big)^3\Big)\bigg\vert_{a=\delta=0} \quad\text{is independent of }\theta\text{ for all }n\in\N_0.
    \end{equation}
    Thus, it follows from the induction hypothesis that the right-hand side of \eqref{eq:lyapunov-schmidt-an-delta} is of the form $\phi(\zeta)\ee^{\ii\theta} + \bar\phi(\zeta)\ee^{-\ii\theta}$ and we conclude the same for the solution $\partial_a^{k+1} \partial_\delta v(0,0)$.
\end{proof}

From the properties of the function $v$ we deduce the following properties for the function $f$, defined in \eqref{eq:def_f}, describing the one-dimensional equation arising in the Lyapunov--Schmidt reduction.

\begin{corollary}\label{cor:properties-f}
    Let $f\colon (-\eps_0,\eps_0)^2 \to \R$ be given by $f(a,\delta) \coloneq (\mathrm{Id}-P_W)F(a\cos(z)+v(a,\delta),\delta) = 0$, as defined in \eqref{eq:def_f}. Then, for all $k, n\in\N_0$ we have:
    \begin{enumerate}
        \item[(i)] $\partial_a^k f(0,0) = 0$,
        \item[(ii)] $\partial_\delta^k f(0,0) = 0$,
        \item[(iii)] $\partial_a^{2n}\partial_\delta^k f(0,0) = 0$,
        \item[(iv)] $\partial_a^k\partial_\delta f(0,0)=0$.
    \end{enumerate}
\end{corollary}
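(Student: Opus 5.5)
We work directly from the definition $f(a,\delta)=(\mathrm{Id}-P_W)F\big(a\cos(\tfrac{\ell}{\pi}\zeta)+v(a,\delta),\delta\big)$, i.e. $f$ is (up to normalisation) the $L_2$-pairing of $F(\cdot,\delta)$ with $\cos(\tfrac{\ell}{\pi}\zeta)$, and we transfer each property of $v$ from Lemma \ref{lem:properties-v} to $f$.

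\textbf{(i)} By Lemma \ref{lem:steady-states_delta=0} we have $F(a\cos(\tfrac{\ell}{\pi}\zeta),0)=0$ for all small $a$, and $v(a,0)=0$ by Lemma \ref{lem:properties-v}(i). Hence $f(a,0)\equiv0$, so every $\partial_a^k f(0,0)$ vanishes.

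\textbf{(ii)} By Lemma \ref{lem:properties-v}(ii) (more precisely its proof), $v(0,\delta)=H_\delta-m$ solves $F(v(0,\delta),\delta)=0$. Therefore $f(0,\delta)\equiv0$, which gives all $\partial_\delta^kf(0,0)=0$.

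\textbf{(iii)} We use the symmetry in Lemma \ref{lem:properties-v}(iii). Let $S$ denote the shift $\zeta\mapsto\zeta+\tfrac{\pi^2}{\ell}$. Then $-a\cos(\tfrac{\ell}{\pi}\zeta)+v(-a,\delta)=S\big(a\cos(\tfrac{\ell}{\pi}\zeta)+v(a,\delta)\big)$. Since $F(\cdot,\delta)$ commutes with $S$ (its coefficients do not depend on $\zeta$), and since pairing with $\cos(\tfrac{\ell}{\pi}\zeta)$ satisfies $\langle Sg,\cos(\tfrac{\ell}{\pi}\cdot)\rangle=-\langle g,\cos(\tfrac{\ell}{\pi}\cdot)\rangle$ (because $S$ maps this cosine to its negative and is unitary), we get $f(-a,\delta)=-f(a,\delta)$. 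So $f$ is odd in $a$, and all even $a$-derivatives vanish for every $\delta$, in particular at $(0,0)$.

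\textbf{(iv)} This is the main computation. Differentiate $f$ once in $\delta$ and then $k$ times in $a$, and evaluate at $(0,0)$ using $\partial_a^nv(0,0)=0$. The terms are of three kinds:
\begin{itemize}
\item[$\bullet$] \emph{Linear terms:} $(\partial_a^k\partial_\delta v)_\theta+\gamma m^3\Delta_\ell(\Delta_\ell+1)\partial_a^k\partial_\delta v$. By Lemma \ref{lem:properties-v}(iv) these are $\theta$-modes $e^{\pm\ii\theta}$ only.
\item[$\bullet$] \emph{Mixed terms:} $\gamma\,\mathrm{div}_\ell\big(\partial_a^{j}(m+a\cos+v)^3\,\nabla_\ell(\Delta_\ell+1)\partial_a^{i}\partial_\delta v\big)$. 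The first factor is $\theta$-independent by \eqref{eq:lyapunov-schmidt-indep-theta}, so the product is again supported on $e^{\pm\ii\theta}$. Terms where $\nabla_\ell(\Delta_\ell+1)$ falls on pure $a$-derivatives of $a\cos(\tfrac{\ell}{\pi}\zeta)+v$ vanish, as noted in the proof of Lemma \ref{lem:properties-v}.
\item[$\bullet$] \emph{Gravity term:} $-\big(\partial_a^k(m+a\cos+v)^3\cos\theta\big)_\theta$, which is again of the form $\psi(\zeta)e^{\pm\ii\theta}$.
\end{itemize}
Every term is therefore orthogonal to the $\theta$-independent function $\cos(\tfrac{\ell}{\pi}\zeta)$, and its projection vanishes, giving $\partial_a^k\partial_\delta f(0,0)=0$.

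The main obstacle is the careful bookkeeping in (iv). One must ensure that the cubic nonlinearity never combines two $\theta$-dependent factors: this cannot happen, since only one $\delta$-derivative is present and all pure $a$-derivatives of $v$ vanish at $(0,0)$.
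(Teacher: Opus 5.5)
Your proposal is correct and follows the paper's proof. Each property is transferred from Lemma \ref{lem:properties-v} just as you do; the one difference is cosmetic. The paper first discards every $\theta$-derivative term (transport, gravity, and the $\theta$-part of the divergence), since these are orthogonal to $\cos(\tfrac{\ell}{\pi}\zeta)$, which reduces $f$ to the $\zeta\zeta$-expression \eqref{eq:simplified-def-f} and shortens your term-by-term bookkeeping in (iv). Also, the term $-3\delta\big((m+u)^2\partial_\delta v\cos\theta\big)_\theta$, which you do not list, vanishes trivially at $\delta=0$.
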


\begin{proof}
Recall that the function $F\colon \Hevd{4}(\T^2)\times\R\to\Levd{2}(\T^2)$ is defined by
\begin{equation*}
    F(u,\delta) \coloneq
    u_\theta + \gamma \div_\ell \big((m+u)^3 \nabla_\ell (\Delta_\ell u + u)\big) - \delta \big((m+u)^3 \cos(\theta)\big)_\theta.
\end{equation*}
Since $\theta$-derivatives of a function are always orthogonal to $\cos\big(\tfrac{\ell}{\pi}\zeta\big)$, the definition of $f$ simplifies to
\begin{equation}\label{eq:simplified-def-f}
    f(a,\delta) = \frac{\gamma\pi^2}{\ell^2} (\mathrm{Id}-P_W)\Big[\Big(\big(m + a\cos\big(\tfrac{\ell}{\pi}\zeta\big) + v(a,\delta)\big)^3 \big(\Delta_\ell v(a,\delta) + v(a,\delta)\big)_\zeta\Big)_\zeta\Big].
\end{equation}

\medskip
{\noindent\bfseries Property (i): }
From the proof of Lemma \ref{lem:properties-v} (i) we know that $v(a,0) = 0$ for all $a \in (-\eps_0,\eps_0)$. Inserting this into \eqref{eq:simplified-def-f}, we conclude $f(a,0) = 0$ for all $a\in(-\eps_0,\eps_0)$.

\medskip
{\noindent\bfseries Property (ii): }
In the proof of Lemma \ref{lem:properties-v} (ii) we have observed that $v(0,\delta)$ is independent of $\zeta$ for all $\delta\in(-\eps_0,\eps_0)$. As a direct consequence of this, we conclude $\big(\Delta_\ell v(0,\delta) + v(0,\delta)\big)_\zeta = 0$ and thus $f(0,\delta) = 0$ for all $\delta\in(-\eps_0,\eps_0)$.

\medskip
{\noindent\bfseries Property (iii): }
We show that $f$ is an odd function in $a$. Indeed, using Lemma \ref{lem:properties-v} (iii), we find that
\begin{equation*}
\begin{aligned}
    f(-a,\delta) &= \frac{\gamma\pi^2}{\ell^2} (\mathrm{Id}-P_W)\Big[\Big(\big(m - a\cos\big(\tfrac{\ell}{\pi}\zeta\big) + v(-a,\delta)\big)^3 \big(\Delta_\ell v(-a,\delta) + v(-a,\delta)\big)_\zeta\Big)_\zeta\Big]\\
    &= \frac{\gamma\pi^2}{\ell^2} (\mathrm{Id}-P_W)\Big[\Big(\big(m + a \cos\big(\tfrac{\ell}{\pi}\big(\zeta + \tfrac{\pi^2}{\ell}\big)\big) + v(a,\delta)\big(\theta,\zeta + \tfrac{\pi^2}{\ell}\big)\big)^3\\
    &\qquad\qquad\qquad\quad\ \big(\Delta_\ell v(a,\delta)\big(\theta,\zeta + \tfrac{\pi^2}{\ell}\big) + v(a,\delta)\big(\theta,\zeta + \tfrac{\pi^2}{\ell}\big)\big)_\zeta\Big)_\zeta\Big]\\
    &= -\frac{\gamma\pi^2}{\ell^2} (\mathrm{Id}-P_W)\Big[\Big(\big(m + a\cos\big(\tfrac{\ell}{\pi}\zeta\big) + v(a,\delta)\big)^3 \big(\Delta_\ell v(a,\delta) + v(a,\delta)\big)_\zeta\Big)_\zeta\Big]\\
    &= -f(a,\delta).
\end{aligned}
\end{equation*}
In particular, we deduce $\partial_a^{2n}\partial_\delta^k f(0,0) = -\partial_a^{2n}\partial_\delta^k f(0,0)$, from where we can conclude property (iii).

\medskip
{\noindent\bfseries Property (iv): }
We differentiate equation \eqref{eq:simplified-def-f} once with respect to $\delta$ and obtain
\begin{equation}\label{eq:deriv_f_delta}
\begin{split}
    \partial_\delta f(a,\delta) &= \frac{\gamma\pi^2}{\ell^2} (\mathrm{Id}-P_W)\Big[\Big(\big(m + a\cos\big(\tfrac{\ell}{\pi}\zeta\big) + v\big)^3 \big(\Delta_\ell \partial_\delta v + \partial_\delta v\big)_\zeta\Big)_\zeta\Big]\\
    &+ \frac{3\gamma\pi^2}{\ell^2} (\mathrm{Id}-P_W)\Big[\Big(\big(m + a\cos\big(\tfrac{\ell}{\pi}\zeta\big) + v\big)^2 \,\partial_\delta v\, \big(\Delta_\ell v + v\big)_\zeta\Big)_\zeta\Big].
\end{split}
\end{equation}
If we now differentiate $\partial_\delta f(a,\delta)$ $k$-times with respect to $a$ and then evaluate at $a=\delta=0$, we can omit the second line of \eqref{eq:deriv_f_delta} since any $a$-derivative of $\Delta_\ell v + v$ will evaluate to zero by Lemma \ref{lem:properties-v} (i). Hence, we find that
\begin{equation*}
    \partial_a^k \partial_\delta f(0,0) = \frac{\gamma\pi^2}{\ell^2} \sum_{l=0}^k \binom{k}{l} (\mathrm{Id}-P_W)\Big[\Big(\partial_a^l\Big(\big(m + a\cos\big(\tfrac{\ell}{\pi}\zeta\big) + v\big)^3\Big) \big(\Delta_\ell \partial_a^{k-l}\partial_\delta v + \partial_a^{k-l}\partial_\delta v\big)_\zeta\Big)_\zeta\Big]\bigg\vert_{a=\delta=0}.
\end{equation*}
Finally, we can conclude from \eqref{eq:lyapunov-schmidt-indep-theta} and Lemma \ref{lem:properties-v} (iv) that
\begin{equation*}
    \partial_a^k \partial_\delta f(0,0) = \frac{\gamma\pi^2}{\ell^2} (\mathrm{Id}-P_W) \big[\phi(\zeta)\ee^{\ii\theta} + \bar\phi(\zeta)\ee^{-\ii\theta}\big] = 0.
\end{equation*}
\end{proof}

With the properties obtained in Corollary \ref{cor:properties-f}, we are finally \textcolor{teal}{in a position} to complete the proof of Theorem \ref{thm:perturbed-steady-states}:

\begin{proof}[Proof of Theorem \ref{thm:perturbed-steady-states} in the case $\frac{\ell}{\pi} \in \Z$]
    We claim that there exists a $\delta_0\leq \eps_0$ such that $f$ has the form
    \begin{equation}\label{eq:expansion-f}
        f(a,\delta) = c a \delta^2 + g(a,\delta) \quad\text{ for all }a,\delta\in(-\delta_0,\delta_0)
    \end{equation}
    where $c>0$ and $\abs{g(a,\delta)}\leq C_1 \abs{a \delta^3} + C_2 \abs{a^3 \delta^2}$.
    For a fixed $\delta > 0$ we assume for a contradiction that there exists a solution to the steady-state equation \eqref{eq:steady-states} other than $H_\delta$ that is still a perturbation of the constant solution $H\equiv m$ for $\delta = 0$. Then, there must exist an $a\in(-\delta_0,\delta_0)\setminus\{0\}$ such that $f(a,\delta) = 0$. Indeed, by the implicit function theorem, $a = 0$ would necessarily imply $u = v(0,\delta) = H_\delta - m$.

    Hence, dividing by $a\delta^2$ we find for such $a$ and $\delta$:
    \begin{equation*}
        0 = \abslr{\frac{f(a,\delta)}{a\delta^2}} = \abslr{c + \frac{g(a,\delta)}{a\delta^2}} \geq c - \abslr{\frac{g(a,\delta)}{a\delta^2}} \geq c - C_1\delta - C_2 a^2.
    \end{equation*}
    This is a contradiction if $\delta_0$ is chosen small enough.

    It remains to prove the expansion \eqref{eq:expansion-f}. To this end, we first note that $f$ is analytic by construction and due to Corollary \ref{cor:properties-f} (i), (ii) and (iv), no constant, linear or quadratic terms appear in the Taylor expansion of $f$ about $a=\delta = 0$. Similarly, to third order we have
    \begin{equation*}
        \partial_a^3 f(0,0) = \partial_a^2 \partial_\delta f(0,0) = \partial_\delta^3 f(0,0) = 0.
    \end{equation*}
    Again, by Corollary \ref{cor:properties-f} (i) and (iv), any further terms in the Taylor expansion of $f$ are at least quadratic in $\delta$, and since we have $\partial_a^2\partial_\delta^2 f(0,0) = 0$ by Corollary \ref{cor:properties-f} (iii), the largest such term is $a^3 \delta^2$. On the other hand, due to Corollary \ref{cor:properties-f} (ii), every term is at least linear in $a$ and all such terms to fourth order and above are bounded by $a\delta^3$. Thus, the expansion \eqref{eq:expansion-f} is satisfied with $c = \tfrac{1}{2}\partial_a\partial_\delta^2 f(0,0)$.

    We differentiate $f(a,\delta)$ from \eqref{eq:simplified-def-f} once with respect to $a$ and twice with respect to $\delta$ to obtain
    \begin{equation*}
    \begin{aligned}
        \partial_a \partial_\delta^2 f(a,\delta)
        &= \frac{\gamma\pi^2}{\ell^2} (\mathrm{Id}-P_W)\Big[\Big(\partial_a\partial_\delta^2\Big(\big(m + a\cos\big(\tfrac{\ell}{\pi}\zeta\big) + v(a,\delta)\big)^3\Big) \big(\Delta_\ell v(a,\delta) + v(a,\delta)\big)_\zeta\Big)_\zeta\Big]\\
        &+ \frac{2\gamma\pi^2}{\ell^2} (\mathrm{Id}-P_W)\Big[\Big(\partial_a\partial_\delta\Big(\big(m + a\cos\big(\tfrac{\ell}{\pi}\zeta\big) + v(a,\delta)\big)^3\Big) \big(\Delta_\ell \partial_\delta v(a,\delta) + \partial_\delta v(a,\delta)\big)_\zeta\Big)_\zeta\Big]\\
        &+ \frac{\gamma\pi^2}{\ell^2} (\mathrm{Id}-P_W)\Big[\Big(\partial_\delta^2\Big(\big(m + a\cos\big(\tfrac{\ell}{\pi}\zeta\big) + v(a,\delta)\big)^3\Big) \big(\Delta_\ell\partial_a v(a,\delta) + \partial_a v(a,\delta)\big)_\zeta\Big)_\zeta\Big]\\
        &+ \frac{\gamma\pi^2}{\ell^2} (\mathrm{Id}-P_W)\Big[\Big(\partial_a\Big(\big(m + a\cos\big(\tfrac{\ell}{\pi}\zeta\big) + v(a,\delta)\big)^3\Big) \big(\Delta_\ell\partial_\delta^2 v(a,\delta) + \partial_\delta^2 v(a,\delta)\big)_\zeta\Big)_\zeta\Big]\\
        &+ \frac{2\gamma\pi^2}{\ell^2} (\mathrm{Id}-P_W)\Big[\Big(\partial_\delta\Big(\big(m + a\cos\big(\tfrac{\ell}{\pi}\zeta\big) + v(a,\delta)\big)^3\Big) \big(\Delta_\ell\partial_a\partial_\delta v(a,\delta) + \partial_a\partial_\delta v(a,\delta)\big)_\zeta\Big)_\zeta\Big]\\
        &+ \frac{\gamma\pi^2}{\ell^2} (\mathrm{Id}-P_W)\Big[\Big(\big(m + a\cos\big(\tfrac{\ell}{\pi}\zeta\big) + v(a,\delta)\big)^3 \big(\Delta_\ell \partial_a\partial_\delta^2 v(a,\delta) + \partial_a\partial_\delta^2 v(a,\delta)\big)_\zeta\Big)_\zeta\Big].
    \end{aligned}
    \end{equation*}
    By Lemma \ref{lem:properties-v} the first four lines evaluate to zero in $a = \delta = 0$ and the last line evaluates to zero, too, because the image of $m^3 \partial_\zeta^2 (\Delta_\ell + \mathrm{Id})$ is orthogonal to $\cos\big(\tfrac{\ell}{\pi}\zeta\big)$. Thus, we find
    \begin{equation}\label{eq:f_a_delta_delta}
        \partial_a \partial_\delta^2 f(0,0) = \frac{6\gamma m^2 \pi^2}{\ell^2} (\mathrm{Id} - P_W) \Big[\Big(\partial_\delta v(0,0) \big(\Delta_\ell \partial_a \partial_\delta v(0,0) + \partial_a \partial_\delta v(0,0)\big)_{\zeta}\Big)_\zeta\Big].
    \end{equation}
    We already know that $\partial_\delta v(0,0) = m^3\cos(\theta)$ from \eqref{eq:lyapunov-schmidt-v_delta}. Differentiating equation \eqref{eq:lyapunov-schmidt-deriv-delta} with respect to $a$ and evaluating at $a = \delta = 0$ we obtain the partial differential equation
    \begin{equation*}
        \big(\partial_a\partial_\delta v(0,0)\big)_\theta + \gamma\, m^3 \Delta_\ell \big(\Delta_\ell \partial_a\partial_\delta v(0, 0) + \partial_a\partial_\delta v(0,0)\big) = -3m^2 \cos\big(\tfrac{\ell}{\pi}\zeta\big) \sin(\theta) \in W.
    \end{equation*}
    This equation has the unique solution
    \begin{equation}\label{eq:lyapunov-schmidt-v_a_delta}
        \partial_a \partial_\delta v(0,0)
        =
        \frac{3m^2 + 6 \ii \gamma m^5}{2 + 8 \gamma^2 m^6} \ee^{\ii \theta} 
        \cos\big(\tfrac{\ell}{\pi} \zeta\big)
        +
        \frac{3m^2 - 6 \ii \gamma m^5}{2 + 8 \gamma^2 m^6} \ee^{-\ii \theta} 
        \cos\big(\tfrac{\ell}{\pi} \zeta\big)\in V.
    \end{equation}
    Finally, we insert \eqref{eq:lyapunov-schmidt-v_delta} and \eqref{eq:lyapunov-schmidt-v_a_delta} into \eqref{eq:f_a_delta_delta} and obtain
    \begin{equation*}
        \partial_a \partial_\delta^2 f(0,0) = \frac{\pi^2}{\ell^2}\cdot\frac{9\gamma m^7}{1 + 4 \gamma^2 m^6} > 0.
    \end{equation*}
    The properties (ii)--(iv) have already been proved in the case $\tfrac{\ell}{\pi}\not\in\Z$.
\end{proof}

We now analyse the stability properties of the perturbed steady states obtained in Theorem \ref{thm:perturbed-steady-states}. To this end, given some $\delta > 0$ we linearise equation \eqref{eq:PDE_full_stationary} around $H_\delta$. Indeed, we write
\begin{equation} \label{eq:linearised_u_delta}
    u_t = L_\delta u + R_\delta(u),
\end{equation}
where $L_\delta:\Hevd{4}(\T^2)\to\Levd{2}(\T^2)$ is defined by
\begin{equation*}
    L_\delta u = -u_\theta - \gamma\,\div_\ell\left(H_\delta^3 \nabla_\ell(\Delta_\ell u + u)\right) - 3 \gamma\,\div_\ell \left(H_\delta^2 u \nabla_\ell(\Delta_\ell H_\delta + H_\delta)\right) + 3 \delta\left(H_\delta^2 u \cos(\theta)\right)_\theta
\end{equation*}
and
\begin{equation*}
    \begin{aligned}
        R_\delta(u) =& -\gamma\,\div_\ell\left((3 H_\delta u^2 + u^3) \nabla_\ell(\Delta_\ell H_\delta + H_\delta)\right)\\
        &- \gamma\,\div_\ell\left((3H_\delta^2 u + 3 H_\delta u^2 + u^3) \nabla_\ell(\Delta_\ell u + u)\right) + \delta\left((3 H_\delta u^2 + u^3) \cos(\theta)\right)_\theta
    \end{aligned}
\end{equation*}
satisfies $R_\delta(0) = 0$ and $R_\delta'(0)=0$. 

\begin{theorem}[Exponential stability of $H_\delta$ for $\ell < \pi$]\label{thm:exp-stability-perturbed-steady-state}
    Let $\ell<\pi$ and $H_\delta$ be the stationary solutions obtained in Theorem \ref{thm:ift_steady_states}. Then, there exists a $\delta_0>0$ such that for all $0<\delta<\delta_0$ the stationary solution $H_\delta$ is exponentially stable: There exist constants $\eps,C,\omega>0$ (depending on $\delta$) such that for all initial values $H_0$ with $\tfrac{1}{4\pi^2}\int_{\T^2} H_0\dd\theta\dd\zeta = m$ and
    \begin{equation*}
        \normb{H_0-H_\delta}_{\Hev{4}} < \eps
    \end{equation*}
    the full rimming-flow equation \eqref{eq:PDE_full_stationary} admits a unique global solution $H$ which satisfies
    \begin{equation*}
        \normb{H(t)-H_\delta}_{\Hev{4}} \leq C \eps \ee^{-\omega t}
    \end{equation*}
    for all $t>0$.
\end{theorem}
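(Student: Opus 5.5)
The plan is to apply the principle of linearized stability, in the form of \cite[Theorem 9.1.1]{lunardi_1995}, to equation \eqref{eq:linearised_u_delta} on the mean-zero space, with $D(L_\delta)=\Hevd{4}(\T^2)$ and ambient space $\Levd{2}(\T^2)$. Mass conservation (Theorem \ref{thm:local_existence} (ii)) together with $\tfrac{1}{4\pi^2}\int H_0 = m$ guarantees that $u=H-H_\delta$ stays in the mean-zero space.

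The functional-analytic hypotheses are routine. The operator $L_\delta$ is a lower-order perturbation of $-\gamma H_\delta^3\Delta_\ell^2$ with $H_\delta\ge m/2>0$. Hence it is sectorial by Proposition \ref{prop:generator} and \cite[Proposition 2.4.1]{lunardi_1995}, and its graph norm is equivalent to the $\dot H^4$-norm. The remainder satisfies $R_\delta(0)=0$ and $R_\delta'(0)=0$. It is $C^1$ (indeed smooth) from a ball in $\Hevd{4}$ to $\Levd{2}$, by the same multiplication estimates used in the proof of Theorem \ref{thm:local_existence}, since $H^4\cdot H^1\hookrightarrow H^1$ in two dimensions. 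So everything reduces to showing
\begin{equation*}
\sup\{\Re\lambda : \lambda\in\sigma(L_\delta)\}<0 \quad\text{for } 0<\delta<\delta_0.
\end{equation*}

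For $\delta=0$ we have $L_0u=-u_\theta-\gamma m^3\Delta_\ell(\Delta_\ell u+u)$, which in Fourier variables has eigenvalues
\begin{equation*}
-\ii k-\gamma m^3\mu_{k,l}(\mu_{k,l}-1),\qquad \mu_{k,l}=k^2+\tfrac{\pi^2}{\ell^2}l^2 .
\end{equation*}
Since $\ell<\pi$, on the mean-zero space we have $\mu_{k,l}\ge 1$, with equality only for $(k,l)=(\pm1,0)$. All other modes satisfy $\mu_{k,l}-1\ge c_0(1+k^2+l^2)$, as in Lemma \ref{lem:equiv-norms}, so their eigenvalues have real part at most $-c_1<0$. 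The spectrum of $L_0$ is therefore the pair of simple eigenvalues $\mp\ii$ with eigenfunctions $\ee^{\pm\ii\theta}$, separated from a remainder contained in $\{\Re\lambda\le -c_1\}$.

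By Theorem \ref{thm:perturbed-steady-states} (iii), $\delta\mapsto L_\delta$ is continuous, in fact $C^1$, in $\mathcal L(\Hevd{4},\Levd{2})$. Upper semicontinuity of the spectrum and analytic perturbation theory for isolated simple eigenvalues then give the following: for small $\delta$ the spectrum of $L_\delta$ consists of two simple eigenvalues $\lambda_\pm(\delta)$ near $\mp\ii$, which are complex conjugates, plus a remainder in $\{\Re\lambda\le -c_1/2\}$. A useful structural point is that $H_\delta$ is independent of $\zeta$ (Theorem \ref{thm:perturbed-steady-states} (ii)). Hence $L_\delta$ commutes with $\partial_\zeta$ and preserves each $\zeta$-Fourier mode $\cos(l\zeta)$. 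The critical eigenvalues live in the $l=0$ sector, where $L_\delta$ coincides exactly with the linearisation of the one-dimensional rimming-flow equation about the one-dimensional steady state from \cite{jlv2023}.

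The main obstacle is to show that $\Re\lambda_-(\delta)<0$ for small $\delta>0$. I would compute the expansion
\begin{equation*}
\lambda_-(\delta)=\ii+\delta\lambda_1+\delta^2\lambda_2+\Ocal(\delta^3)
\end{equation*}
by Rayleigh--Schrödinger perturbation theory. This uses the expansion of $H_\delta$ from Theorem \ref{thm:perturbed-steady-states} (iv) to write $L_\delta=L_0+\delta L^{(1)}+\delta^2L^{(2)}+\Ocal(\delta^3)$, with $L^{(1)}$ built from $H_1=m^3\cos\theta$. The adjoint eigenfunction of the eigenvalue $\ii$ is $\ee^{-\ii\theta}$. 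I would compute $\lambda_1=\langle L^{(1)}\ee^{-\ii\theta},\ee^{-\ii\theta}\rangle/(2\pi)^2$ and expect it to be purely imaginary or zero, since $L^{(1)}$ shifts the $\theta$-frequency by $\pm1$ and maps $k=-1$ into $k=0,-2$ modes. The decisive quantity is then $\lambda_2$. It involves $\langle L^{(2)}\ee^{-\ii\theta},\ee^{-\ii\theta}\rangle$ plus the resolvent term $\langle L^{(1)}(L_0-\ii)^{-1}P^\perp L^{(1)}\ee^{-\ii\theta},\ee^{-\ii\theta}\rangle$ through the modes $k=-2$ and $k=0$. I expect $\Re\lambda_2$ to be a negative explicit rational expression in $\gamma m^3$, analogous to the positive constant $\partial_a\partial_\delta^2f(0,0)$ found in the Lyapunov--Schmidt argument. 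If it is available, I would cite the corresponding spectral result of \cite{jlv2023} for the one-dimensional linearisation instead of recomputing it.

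With $\Re\lambda_\pm(\delta)\le -c\delta^2$, I would set $\omega=\min\{c\delta^2,c_1/2\}/2$. Lunardi's theorem then yields, for $\|H_0-H_\delta\|_{H^4}<\eps(\delta)$, a global solution satisfying
\begin{equation*}
\|H(t)-H_\delta\|_{H^4}\le C\eps\,\ee^{-\omega t}.
\end{equation*}
This solution stays positive and therefore coincides with the maximal solution of Theorem \ref{thm:local_existence}, which gives uniqueness.
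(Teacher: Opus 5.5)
Your route is the paper's: Lunardi's linearised-stability theorem for $L_\delta$ on the mean-zero space, the Fourier spectrum of $L_0$ with only the simple pair $\mp\ii$ (modes $\ee^{\pm\ii\theta}$) on the imaginary axis when $\ell<\pi$, persistence of the stable part of the spectrum under the small perturbation $L_\delta-L_0$, and a second-order expansion of the critical eigenvalue. What is missing is the one non-routine step. You leave $\Re\lambda_2<0$ as an expectation, and the formula you propose for it has the wrong sign. If $u_1$ solves $(L_0-\ii)u_1=-L^{(1)}\ee^{-\ii\theta}$, then $\lambda_2=\tfrac{1}{4\pi^2}\dpr{L^{(2)}\ee^{-\ii\theta}+L^{(1)}u_1,\ee^{-\ii\theta}}$. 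So the resolvent term is $\dpr{L^{(1)}(\ii-L_0)^{-1}P^\perp L^{(1)}\ee^{-\ii\theta},\ee^{-\ii\theta}}$, not the one with $(L_0-\ii)^{-1}$. The sign is decisive here, because the $L^{(2)}$ contribution turns out to be purely imaginary. The entire real part of $\lambda_2$ therefore comes from the resolvent term, and with your sign you would obtain $\Re\lambda_2>0$, i.e.\ instability.

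The paper carries this computation out, and it closes the gap. Since $\Delta_\ell\ee^{-\ii\theta}+\ee^{-\ii\theta}=0$ and $\nabla_\ell(\Delta_\ell H_\delta+H_\delta)=\Ocal(\delta^2)$, only the gravity term survives at first order:
\begin{equation*}
L^{(1)}\ee^{-\ii\theta}=3m^2(\ee^{-\ii\theta}\cos\theta)_\theta=-3\ii m^2\ee^{-2\ii\theta}.
\end{equation*}
Hence $\lambda_1=0$ exactly, as your frequency argument in fact shows, and
\begin{equation*}
u_1=\frac{3m^2-36\ii\gamma m^5}{1+144\gamma^2m^6}\,\ee^{-2\ii\theta}.
\end{equation*}
At second order there are two sources in $L^{(2)}\ee^{-\ii\theta}$. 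The $\delta^2$-coefficient $H^{(2)}\in\mathrm{span}\{\cos2\theta,\sin2\theta\}$ of $H_\delta$ enters only through $9\gamma m^2\bigl(\ee^{-\ii\theta}\partial_\theta H^{(2)}\bigr)_\theta$, which has no $\ee^{-\ii\theta}$-component. The gravity term $6m^4(\cos^2\theta\,\ee^{-\ii\theta})_\theta$ contributes $-3\ii m^4$. Finally, $L^{(1)}u_1$ has $\ee^{-\ii\theta}$-coefficient $-\bigl(9\gamma m^5+\tfrac32\ii m^2\bigr)\tfrac{3m^2-36\ii\gamma m^5}{1+144\gamma^2m^6}$. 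Together,
\begin{equation*}
\lambda_2=-\frac{81\gamma m^7}{1+144\gamma^2m^6}-\ii\,\frac{108\gamma^2m^{10}+\tfrac{15}{2}m^4}{1+144\gamma^2m^6},
\end{equation*}
which is exactly the paper's value. This gives $\Re\lambda_\pm(\delta)\le-c\delta^2$ for small $\delta>0$. With that established, the rest of your argument goes through: the choice of $\omega$, the application of Lunardi's theorem, and uniqueness via Theorem \ref{thm:local_existence}.
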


\begin{proof}
    Since $H_\delta$ is independent of $\zeta$ by Theorem \ref{thm:ift_steady_states} (ii), the proof is similar to the 1D case \cite{karabash_multi-parameter_2024}. We apply \cite[Theorem 9.1.2]{lunardi_1995}. The operator $L_\delta$ from \eqref{eq:linearised_u_delta} may be viewed as a lower-order perturbation of $-\gamma H_\delta^3 \Delta_\ell^2$ and thus by Proposition \ref{prop:generator} and \cite[Proposition 2.4.1]{lunardi_1995}, $L_\delta$ is sectorial. Moreover, the graph norm of $L_\delta$ is equivalent to the $\Hevd{4}$ norm.

    The spectrum of $L_0$ is given by
    \begin{equation*}
        \sigma(L_0) = \set{\lambda_{k,l} = -\ii k - \gamma m^3 \Big(k^2 + \tfrac{\pi^2}{\ell^2} l^2\Big)\Big(k^2 + \tfrac{\pi^2}{\ell^2}l^2 - 1\Big)}{k, l\in\Z},
    \end{equation*}
    consisting only of isolated simple eigenvalues $\lambda_{k,l}$ with eigenfunctions $\ee^{\ii k\theta}\cos(l\zeta).$
\begin{figure}[h!]
    \centering
    \begin{minipage}[t]{0.5\textwidth}
        \centering
        \begin{tikzpicture}
            \begin{axis}[
                xlabel = {Re($\lambda_{k,l}$)},
                ylabel = {Im($\lambda_{k,l}$)},
                xmin = -5, xmax = 2,
                ymin = -3, ymax = 3,
                axis x line=middle,
                axis y line=middle,
                xtick=\empty,
                ytick=\empty,
                xticklabels={},
                yticklabels={},
            ]
            \end{axis}
            \begin{axis}[
              xlabel = {Re($\lambda_{k,l}$)},
              ylabel = {Im($\lambda_{k,l}$)},
              xmin = -5, xmax = 2,
              ymin = -3, ymax = 3,
            ]
            \pgfmathsetmacro{\ell}{1.5*pi}
            \foreach \k in {-3,...,3} { 
              \foreach \l in {-4,...,4} { 
                \ifthenelse{\k=0 \AND \l=0}{
                }{
                    \pgfmathsetmacro{\xcoord}{-(pow(\k,2) + pow(\l*pi/\ell,2))*(pow(\k,2) + pow(\l*pi/\ell,2) - 1)}
                    \pgfmathsetmacro{\ycoord}{-\k}
                    \pgfmathparse{\xcoord>0}\ifnum\pgfmathresult=1\relax
                        \addplot[only marks, mark=*, mark size=1.5pt, magenta]
                            coordinates {(\xcoord, \ycoord)};
                    \else
                        \addplot[only marks, mark=*, mark size=1.5pt, luh-dark-blue]
                            coordinates {(\xcoord, \ycoord)};
                    \fi
                }
              }
            }
            \end{axis}
        \end{tikzpicture}
    \end{minipage}%
    \begin{minipage}[t]{0.5\textwidth}
    \centering
        \begin{tikzpicture}
            \begin{axis}[
                xlabel = {Re($\lambda_{k,l}$)},
                ylabel = {Im($\lambda_{k,l}$)},
                xmin = -5, xmax = 2,
                ymin = -3, ymax = 3,
                axis x line=middle,
                axis y line=middle,
                xtick=\empty,
                ytick=\empty,
                xticklabels={},
                yticklabels={},
            ]
            \end{axis}
            \begin{axis}[
              xlabel = {Re($\lambda_{k,l}$)},
              ylabel = {Im($\lambda_{k,l}$)},
              xmin = -5, xmax = 2,
              ymin = -3, ymax = 3,
            ]
            \pgfmathsetmacro{\ell}{3*pi}
            \foreach \k in {-3,...,3} { 
              \foreach \l in {-5,...,5} { 
                \ifthenelse{\k=0 \AND \l=0}{
                }{
                    \pgfmathsetmacro{\xcoord}{-(pow(\k,2) + pow(\l*pi/\ell,2))*(pow(\k,2) + pow(\l*pi/\ell,2) - 1)}
                    \pgfmathsetmacro{\ycoord}{-\k}
                    \pgfmathparse{\xcoord>0}\ifnum\pgfmathresult=1\relax
                        \addplot[only marks, mark=*, mark size=1.5pt, magenta]
                            coordinates {(\xcoord, \ycoord)};
                    \else
                        \addplot[only marks, mark=*, mark size=1.5pt, luh-dark-blue]
                            coordinates {(\xcoord, \ycoord)};
                    \fi
                }
              }
            }
            \end{axis}
        \end{tikzpicture}
    \end{minipage}%
    \caption{Eigenvalues with real part $\mathrm{Re}(\lambda_{k,l})\geq -5$ of $L_0$ for length $\ell = \tfrac{3}{2}\pi$ (left) and $\ell=3\pi$ (right). With increasing $\ell$ more and more unstable eigenvalues appear.}
    \label{fig:long-cylinder-instability}
\end{figure}
    
    Thus, $F:\R\times\big(\C\times\Hevd{4}(\T^2)\big)\to\Levd{2}(\T^2)$ defined by
    \begin{equation*}
        F (\delta, \lambda, u) \coloneq L_\delta u - \lambda u
    \end{equation*}
    is analytic and we have $F\big(0, \lambda_{k,l}, \ee^{\ii k\theta}\cos(l \zeta)\big) = 0$. By the implicit function theorem, there are analytic eigenvalue curves $\lambda_{k,l}(\delta)$ with eigenfunctions $u_{k,l}(\delta)$ also depending analytically on $\delta$.
    
    We observe that for $\delta = 0$ all eigenvalues are stable and bounded away from the imaginary axis except for the critical complex-conjugated eigenvalue pair $\lambda = \pm \ii$ with eigenmodes $u = \ee^{\mp \ii\theta}$. This property is conserved for small $\delta$. Indeed, since the kernel of $L_0$ is finite-dimensional, the perturbation $L_\delta - L_0$ is relatively bounded with respect to $L_0$, i.e.
    \begin{equation*}
        \norm{(L_\delta - L_0)u}_{L_2} \leq C_\delta\norm{u}_{L_2} + B_\delta\norm{L_0 u}_{L_2}
    \end{equation*}
    and the bound $B_\delta$ can be made arbitrarily small for small $\delta$. Since $L_0$ is closed it follows from \cite[Theorem IV-1.1]{kato_perturbation_1995} that $L_\delta$ is closed as well for $\delta$ small enough. Thus, by \cite[Theorem VII-1.7, Remark VII-2.9]{kato_perturbation_1995}, except for the two perturbed critical eigenvalues the spectrum of $L_\delta$ is stable and bounded away from the imaginary axis for small $\delta$.
    
    Without loss of generality we consider the only the critical eigenvalue $+\ii$ and expand it (and its eigenmode $\ee^{-\ii\theta}$) in $\delta$ around $\delta = 0$ (cf. also \cite{karabash_multi-parameter_2024}):
    \begin{equation*}
    \begin{aligned}
        \lambda &= \ii + \delta\lambda_1 + \delta^2 \lambda_2 + \Ocal(\delta^3),\\
        u &= \ee^{-\ii\theta} + \delta u_1 + \delta^2 u_2 + \Ocal(\delta^3).
    \end{aligned}
    \end{equation*}
    To first order in $\delta$ the eigenvalue equation $L_\delta u = \lambda u$ reads
    \begin{equation*}
        \lambda_1\ee^{-\ii\theta} + \ii u_1 + u_{1,\theta} + \gamma m^3 \Delta_\ell (\Delta_\ell u_1 + u_1) + 3\ii m^2\ee^{-2\ii\theta} = 0.
    \end{equation*}
    Here, we have used the expansion of the perturbed steady states $H_\delta$ from Theorem \ref{thm:perturbed-steady-states}.
    Projecting this equation onto $\dpr{\ee^{-\ii\theta}}$ we immediately obtain $\lambda_1 = 0$, hence we must proceed expanding to second order. Solving subsequently for $u_1$ we find
    \begin{equation*}
        u_1 = \frac{3m^2 - 36\gamma m^5\ii}{1 + 144\gamma^2 m^6}\ee^{-2\ii\theta}.
    \end{equation*}
    Since we are only interested in the Taylor coefficient $\lambda_2$, we do not compute the full second-order eigenvalue equation. Instead, we only consider its projection onto $\dpr{\ee^{-\ii\theta}}$ and obtain
    \begin{equation*}
        \lambda_2 = -\frac{81\gamma m^7}{1 + 144\gamma^2 m^6} - \ii \frac{108\gamma^2 m^{10} + \tfrac{15}{2}m^4}{1 + 144\gamma^2 m^6}.
    \end{equation*}
    Thus, it follows that all eigenvalues have negative real part in a neighbourhood of $\delta = 0$ when $\delta\neq 0$ and hence we conclude that the steady states $H_\delta$ are exponentially stable.
\end{proof}

\begin{remark}
    The spectral gap computed in the proof of Theorem \ref{thm:exp-stability-perturbed-steady-state} converges to zero as $\delta\to 0$. Thus, the exponential stability of the corresponding steady states $H_\delta$ for $0 < \delta\ll 1$ is non-uniform in $\delta$ and the constants $\eps$, $C$, $\omega$ may depend on $\delta$.
\end{remark}

For \emph{long} cylinders (that is, if $\ell > \pi$) we have already seen that in the absence of gravity ($\delta = 0$) the constant steady states are unstable (cf. Proposition \ref{prop:const-unstable-large-ell}). Of course, this instability persists for the perturbed steady states $H_\delta$ if $\delta$ is small enough:

\begin{proposition}[Instability of $H_\delta$ for $\ell > \pi$] \label{thm:instability_H_delta}
Let $\tfrac{\ell}{\pi}\not\in\Z$, $\ell>\pi$. Then, there exist constants $C, \omega > 0$, depending on $\ell$, such that equation \eqref{eq:linearised_u_delta} admits a non-trivial backward solution $u \in C\bigl((-\infty,0];\Hevd{4}(\T^2)\bigr) \cap C^1\bigl((-\infty,0];\Levd{2}(\T^2)\bigr)$ satisfying
\begin{equation*}
    \|u(t)\|_{H^4} \leq C \ee^{\omega t},
\end{equation*}
for all $t < 0$.
\end{proposition}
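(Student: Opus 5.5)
The plan is to argue exactly as in the proof of Proposition \ref{prop:const-unstable-large-ell}, applying \cite[Theorem 9.1.3]{lunardi_1995} to \eqref{eq:linearised_u_delta}. This requires three ingredients: $L_\delta$ is sectorial with graph norm equivalent to the $\Hevd{4}$-norm; $R_\delta$ is $C^1$ with locally Lipschitz derivative and $R_\delta(0)=0$, $R_\delta'(0)=0$; and $\sigma(L_\delta)$ contains a nonempty part with positive real part that is bounded away from the imaginary axis.

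The first two ingredients are routine. Sectoriality of $L_\delta$ follows as in the proof of Theorem \ref{thm:exp-stability-perturbed-steady-state}: $L_\delta$ is a lower-order perturbation of $-\gamma H_\delta^3\Delta_\ell^2$, where $H_\delta$ is positive and bounded away from zero for small $\delta$, so Proposition \ref{prop:generator} together with \cite[Proposition 2.4.1]{lunardi_1995} applies. The regularity of $R_\delta$ follows from the multiplication estimates used in the proof of Theorem \ref{thm:local_existence}, since $R_\delta$ is a polynomial in $u$ with coefficients built from $H_\delta\in\Hev{4}$. Smoothness of $H_\delta$ can be obtained by bootstrapping the stationary equation, or directly from the one-dimensional construction in \cite{jlv2023}.

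The main step is the spectral one. For $\delta=0$ the spectrum of $L_0$ consists of the simple eigenvalues
\begin{equation*}
\lambda_{k,l}=-\ii k-\gamma m^3\bigl(k^2+\tfrac{\pi^2}{\ell^2}l^2\bigr)\bigl(k^2+\tfrac{\pi^2}{\ell^2}l^2-1\bigr).
\end{equation*}
For $\ell>\pi$, the choice $k=0$, $l=1$ gives the real eigenvalue $\lambda_{0,1}=\gamma m^3\tfrac{\pi^2}{\ell^2}\bigl(1-\tfrac{\pi^2}{\ell^2}\bigr)>0$. More generally, every $\lambda_{0,l}$ with $0<l<\ell/\pi$ is positive. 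Because $\tfrac{\ell}{\pi}\notin\Z$, no $\lambda_{0,l}$ with $l\neq0$ vanishes, and so the unstable part of the spectrum is a finite set of isolated eigenvalues with real parts $\geq\omega_0>0$. For $\lambda_{\pm1,0}=\mp\ii$ the real part is zero, and these modes are harmless for instability.

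Next I would transfer this spectral information to small $\delta>0$ by the same perturbation argument used in the proof of Theorem \ref{thm:exp-stability-perturbed-steady-state}. By Theorem \ref{thm:perturbed-steady-states} (iii), $\|H_\delta-m\|_{H^4}=\Ocal(\delta)$, so $L_\delta-L_0$ is $L_0$-bounded with relative bound tending to zero as $\delta\to0$. Then \cite[Theorems IV-1.1, VII-1.7]{kato_perturbation_1995} imply the following. Upper semicontinuity of the spectrum and continuity of the finite system of eigenvalues inside a contour around the positive set $\{\lambda_{0,l}:0<l<\ell/\pi\}$ guarantee that, for $\delta$ small, $L_\delta$ still has eigenvalues with real part $\geq\omega_0/2$. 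The remaining spectrum is not needed beyond the fact that the unstable part is separated from the imaginary axis. Upper semicontinuity provides this separation: the rest of $\sigma(L_\delta)$ stays in a small neighbourhood of $\sigma(L_0)\setminus\{\lambda_{0,l}>0\}$, which lies in $\{\Re\lambda\le0\}$.

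Using analytic eigenvalue curves, as in the proof of Theorem \ref{thm:exp-stability-perturbed-steady-state}, gives a cleaner version of this step. The real eigenvalue $\lambda_{0,1}(\delta)$ depends continuously on $\delta$ and stays positive. Then \cite[Theorem 9.1.3]{lunardi_1995} yields the backward solution with $\|u(t)\|_{H^4}\le C\ee^{\omega t}$, where $C$ and $\omega$ can be taken uniform for small $\delta$ and hence depend only on $\ell$.

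The main obstacle is making the spectral perturbation step rigorous. The relative bound must hold in the right sense, since $L_\delta-L_0$ contains fourth-order terms with coefficient $H_\delta^3-m^3=\Ocal(\delta)$, so its smallness must be measured against $L_0$, not merely relative to $L_2$. One also needs the neutral eigenvalues near $\pm\ii$ to cause no trouble. Their real parts being possibly slightly positive, by $\Ocal(\delta^2)$, would only enlarge the unstable set, but it would spoil the requirement that the unstable spectrum be bounded away from the imaginary axis uniformly. To avoid this, I would apply Lunardi's theorem with the splitting at level $\omega_0/4$ rather than at $0$, which \cite[Theorem 9.1.3]{lunardi_1995} permits, since it only requires $\sigma_+(L_\delta)\subset\{\Re\lambda>\omega\}$ for some $\omega>0$ with the remaining spectrum in $\{\Re\lambda<\omega\}$.
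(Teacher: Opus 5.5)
Your proposal is correct and follows the paper's route: you apply \cite[Theorem 9.1.3]{lunardi_1995}, check sectoriality and the regularity of $R_\delta$ as in the proof of Theorem~\ref{thm:exp-stability-perturbed-steady-state}, and obtain unstable spectrum of $L_\delta$ by perturbing the positive eigenvalues $\lambda_{0,l}$, $0<l<\ell/\pi$, of $L_0$. Two small remarks: first, your worry about the critical pair is already settled by the $\lambda_2$ computation in that proof, since it involves only $\zeta$-independent modes and so holds for every $\ell$, which moves these eigenvalues into the left half-plane; second, your Kato contour argument is more robust than the paper's appeal to analytic curves of simple eigenvalues, because the $\lambda_{0,l}$ can coincide (e.g.\ $\lambda_{0,1}=\lambda_{0,2}$ for $\ell=\sqrt5\,\pi$).
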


\begin{proof}
    Again, we apply \cite[Theorem 9.1.3]{lunardi_1995}. Note that in the case $\ell>\pi$, $L_0$ has got (a finite number of) unstable eigenvalues. We have seen in the proof of Theorem \ref{thm:exp-stability-perturbed-steady-state} that they are a point on an analytic eigenvalue curve. Thus, $L_\delta$ has unstable eigenvalues if $\delta$ is small enough. All other assumptions have already been checked in the proof of Theorem \ref{thm:exp-stability-perturbed-steady-state}.
\end{proof}


\section{Two-time scale dynamics of positive solutions for small gravity} \label{sec:slow_manifold}

In this section, we examine the long-time behaviour of solutions to the rimming-flow equation for the special case in which the length of the cylinder is exactly $\ell = \pi$.
In this case the differential operators $\div_\ell$, $\nabla_\ell$ and $\Delta_\ell$ reduce to the usual divergence, gradient and Laplace operators.

Though on a more formal level, we generalise the results of the cross-sectional case treated in \cite{jlv2023} to the three-dimensional cylinder and the corresponding rimming-flow equation
\begin{equation} \label{eq:PDE_on_two_time_scales}
    \begin{cases}
    h_t + h_\theta + \gamma\, \Acal(h^3)h  
    =
    \delta \left(h^3 \cos(\theta)\right)_\theta,
    &
    t > 0,\, \theta \in \T,\, \zeta \in \T
    \\
    h(0) = h_0, & \zeta \in \T
    \end{cases}
\end{equation}
where the operator $\Acal$ is defined by
\begin{equation*}
    \Acal(u)v = \div \big(u \nabla (\Delta v + v)\big).
\end{equation*}
We investigate this equation near the bifurcation at $\delta=0$ by means of the Poincaré--Lindstedt method of multiple time scales \cite{drazin_nonlinear_1992,HI2011}. More precisely, we show that solutions exhibit dynamics at the \emph{fast time scale} $t$ as well as at the \emph{slow time scale} $\tau\coloneq \delta^2t$. 
For this purpose, we eliminate the transport term $h_\theta$ from the equation by considering a rotating coordinate system the angular velocity of which equals the angular velocity of the cylinder. This corresponds to making the travelling-wave ansatz $\vartheta = \theta-t$ and looking for solutions
\begin{equation*}
    H(t,\vartheta,\zeta) = h(t,\theta,\zeta),
    \quad
    t > 0,\ \vartheta \in \T,\, \zeta \in \T.
\end{equation*}
Plugging this ansatz into the equation \eqref{eq:PDE_on_two_time_scales} we obtain the transformed evolution equation
\begin{equation} \label{eq:PDE_transformed}
    H_t + \gamma\, \div\left(H^3 \nabla (\Delta H + H)\right) 
    =
    \delta \left(H^3 \cos(\vartheta + t)\right)_\vartheta,
    \quad t > 0,\, \vartheta \in \T,\, \zeta \in \T,
\end{equation}
with initial condition $H(0,\vartheta,\zeta)=h(0,\theta,\zeta)=h_0(\theta,\zeta)$.

Note that \eqref{eq:PDE_transformed} is a non-autonomous equation as the term $\cos(\xi+t)$ on the right-hand side depends on both variables $\vartheta$ and $t$.
However, this transformation does neither affect the fluid mass, which we continue to denote by
\begin{equation*}
    m \coloneq
    \int_{\T^2} h_0 \dd\vartheta \dd\zeta
    =
    \int_{\T^2} H(t,\vartheta, \zeta)\dd\vartheta \dd\zeta
\end{equation*}
nor the energy (cf. Theorem \ref{thm:local_existence})
\begin{equation*}
    E[H] = \frac 12\int_{\T^2} |\nabla H|^2 - |H|^2 \dd\vartheta\dd\zeta + \frac{m^2}{2}.
\end{equation*}
Using the energy dissipation equation from the proof of Theorem \ref{thm:local_existence}, namely
\begin{equation*}
    \frac{\dd}{\dd t} E[H(t)] = -\gamma \int_{\T^2} H^3 \absb{\nabla (\Delta H + H)}^2 \dd\theta\dd\zeta,
\end{equation*}
we can easily classify the positive steady states in the case $\delta = 0$ for the transformed equation \eqref{eq:PDE_transformed}:

\begin{lemma}
    Fix a mass $m > 0$. The set of positive steady states of equation\eqref{eq:PDE_transformed} for $\delta = 0$ is a three-dimensional manifold
    \begin{equation*}
        \Mcal(m)\coloneq
        \set{m + a_{-1}e^{-\ii\vartheta} + a_1 e^{\ii\vartheta} + b \cos(\zeta)}{{a_{\pm 1} \in\Cbb, b\in\R,\ a_{-1}=\overline{a_{1}},\ \abs{a_{-1}}+\abs{a_{1}}+|b|<m}}.
    \end{equation*}
\end{lemma}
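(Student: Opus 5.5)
The plan is to mirror the proof of Lemma \ref{lem:steady-states_delta=0}, now without the transport term, since \eqref{eq:PDE_transformed} with $\delta=0$ has no $H_\vartheta$ term and $\ell=\pi$.

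\emph{Sufficiency.} Let $H = m + a_{-1}\ee^{-\ii\vartheta} + a_1\ee^{\ii\vartheta} + b\cos(\zeta)$ with $a_{-1}=\overline{a_1}$. Then $H$ is real-valued, even in $\zeta$, smooth, and has mean $m$. Each of the three nonconstant modes satisfies $\Delta\phi = -\phi$, so $\Delta H + H = m$ is constant. Hence $\nabla(\Delta H + H)=0$ and $H$ is stationary. For positivity, write $a_1\ee^{\ii\vartheta}+a_{-1}\ee^{-\ii\vartheta} = 2\Re(a_1\ee^{\ii\vartheta})$. Its modulus is at most $2\abs{a_1} = \abs{a_1}+\abs{a_{-1}}$, so
\begin{equation*}
    H \geq m - \abs{a_1} - \abs{a_{-1}} - \abs{b} > 0
\end{equation*}
under the stated constraint.

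\emph{Necessity.} Let $H\in\Hev{4}(\T^2)$ be a positive steady state with mass $m$. Testing the stationary equation $\div(H^3\nabla(\Delta H + H))=0$ with $\Delta H + H$, as in the dissipation identity of Theorem \ref{thm:local_existence}(iv), gives
\begin{equation*}
    \int_{\T^2} H^3 \absb{\nabla(\Delta H + H)}^2 \dd\vartheta\dd\zeta = 0 .
\end{equation*}
Since $H>0$ is continuous (by $H^4\hookrightarrow C$), it follows that $\nabla(\Delta H + H)=0$, so $\Delta H + H \equiv c$ is constant. In Fourier variables this reads $(1-k^2-l^2)H_{k,l}=c\,\delta_{k0}\delta_{l0}$. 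So $H_{0,0}=c=m$, and $H_{k,l}=0$ unless $k^2+l^2=1$, i.e. $(k,l)\in\{(\pm1,0),(0,\pm1)\}$. Evenness in $\zeta$ gives $H_{0,1}=H_{0,-1}$, which is real by the reality condition; call it $b/2$. Reality also gives $H_{-1,0}=\overline{H_{1,0}}$. Therefore $H$ has the form claimed in the statement.

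The only delicate point is the positivity constraint, which holds as written. The minimum of $H$ over $\T^2$ is attained with $\cos\zeta=\mp1$ and $\vartheta$ chosen so that $\Re(a_1\ee^{\ii\vartheta}) = -\abs{a_1}$. This minimum equals $m-2\abs{a_1}-\abs{b} = m-\abs{a_1}-\abs{a_{-1}}-\abs{b}$, so $H>0$ forces exactly the inequality in $\Mcal(m)$. Finally, $\Mcal(m)$ is parametrised bijectively by $(\Re a_1,\Im a_1,b)$ ranging over an open subset of $\R^3$, via a linear injective map into the finite-dimensional space of these modes. It is therefore a three-dimensional manifold. No genuine obstacle is expected; the main care needed is that the testing step requires enough regularity to integrate by parts, and $H^4$ suffices.
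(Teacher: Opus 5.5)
Your proof is correct and follows the same route as the paper. Vanishing of the dissipation $\int_{\T^2} H^3\absb{\nabla(\Delta H+H)}^2\dd\vartheta\dd\zeta$ forces $\nabla(\Delta H+H)=0$, and the Fourier representation then leaves only the modes $(0,0)$, $(\pm1,0)$, $(0,\pm1)$. You additionally spell out three things the paper leaves implicit: the converse inclusion, the sharpness of the positivity constraint $m>2\abs{a_1}+\abs{b}$, and the three-dimensional parametrisation.
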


\begin{proof}
    For steady states the energy dissipation must of course vanish. Since $H$ is assumed to be positive we find $\nabla(\Delta H + H) = 0$. The result follows from its Fourier representation.
\end{proof}

Note that steady states of \eqref{eq:PDE_transformed} are travelling waves of the original rimming-flow equation \eqref{eq:PDE_on_two_time_scales} which have a propagation speed exactly matching the cylinder rotation.

\begin{remark}
    In the case $\ell < \pi$ of a \emph{short cylinder} the eigenmode $\cos(\zeta)$ is stable and the coefficient $b$ becomes of order $\delta$ in finite time. Thus, the problem reduces to the cross-sectional case with only coefficients $a_1$ and $a_{-1}$. See Section \ref{sec:numerics} for numerical simulations and \cite{jlv2023} for a rigorous result in this case.
\end{remark}


\subsection{Convergence to the manifold $\Mcal(m)$} \label{sec:convergence_to_manifold}
We show that solutions of \eqref{eq:PDE_transformed} with mass $m$ that emerge from a positive initial value and are bounded away from zero, converge exponentially fast to a neighbourhood of the scale $\delta$ of the manifold $\Mcal(m)$
consisting exactly of the steady states of \eqref{eq:PDE_transformed} in the case $\delta=0$. Henceforth, we denote by $P_1 : \Lev{2}(\T^2) \to \Mcal(0)$ the projection onto $\dpr{\ee^{\pm\ii\vartheta}, \cos{\zeta}}$ such that $m + P_1 h\in\Mcal(m)$ is the closest point on $\Mcal(m)$ to the function $h\in\Lev{2}(\T^2)$ with mass $m$. By $P_{\geq 2}$ on the other hand we denote the projection onto the subspace orthogonal to $\Mcal(m)$, i.e. $P_{\geq 2} h = h - m - P_1 h$ for $h$ with mass $m$.

Recall that we have the energy dissipation formula 
\begin{equation}\label{eq:energy-dissipation-delta-positive}
    \frac{\dd}{\dd t} E[H]
    =
    -
    \gamma\, \int_{\Tbb^2} H^3 |\nabla(\Delta H + H)|^2 \dd\vartheta \dd \zeta
    -
    \delta \int_{\T^2} \bigl(H^3 \cos(\vartheta + t)\bigr)_\vartheta (\Delta H + H) \dd \vartheta \dd \zeta.
\end{equation}

The idea of the proof is to estimate the solution's distance to the manifold by the energy. 

\begin{theorem}[Convergence to manifold]\label{thm:convergence-to-manifold}
    Let $H$ be a solution to the transformed rimming-flow equation \eqref{eq:PDE_transformed} with initial value $H(0)=h_0$ and suppose $H$ remains bounded away from zero on some positive time interval $[0,T]$, i.e.
    \begin{equation*}
        H(t,\vartheta,\zeta)\geq c_0>0, \quad\text{for all }0\leq t\leq T,\ \vartheta,\zeta\in\T^2.
    \end{equation*}
    Furthermore, suppose $E[h_0]>0$ (otherwise we find immediately $h_0\in\mathcal{M}$). Then, there exist constants $\delta_0>0$ and $C>0$, both depending on $\gamma$, $c_0$, $E[h_0]$ and the mean value of $h_0$, such that for all $0<\delta<\delta_0$ and all $0\leq t\leq T$ the solution $H$ satisfies
    \begin{equation*}
        \dist_{H_1}(H(t),\mathcal{M}) \leq 2\pi\sqrt{3} \dist_{H_1}(h_0,\mathcal{M}) \ee^{-\gamma c_0^3 t} + C\delta.
    \end{equation*}
\end{theorem}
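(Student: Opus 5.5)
The plan is to use the energy $E$ as a Lyapunov functional that is equivalent to the squared distance to $\Mcal(m)$. Then the dissipation identity \eqref{eq:energy-dissipation-delta-positive} becomes a linear differential inequality for $E$ with a forcing of order $\delta$.

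\textbf{Energy equals squared distance.} In Fourier variables (with $\ell=\pi$, so the Laplacian is the standard one) write $H=m+P_1H+w$ with $w=P_{\geq 2}H$. The $(0,0)$ mode contributes $-\tfrac{m^2}{2}\cdot 4\pi^2$ up to normalisation; the constant $\tfrac{m^2}{2}$ in $E$ is presumably arranged to cancel it. The modes with $k^2+l^2=1$, namely $\ee^{\pm\ii\vartheta}$ and $\cos\zeta$, contribute nothing. Every remaining mode has $k^2+l^2\geq 2$, so
\begin{equation*}
E[H]=2\pi^2\sum_{k^2+l^2\geq 2}(k^2+l^2-1)\abs{H_{k,l}}^2 .
\end{equation*}
Since $k^2+l^2-1\geq \tfrac13(1+k^2+l^2)$ on these modes, $E[H]$ is comparable to $\norm{w}_{H^1}^2=\dist_{H_1}(H,\Mcal)^2$ with explicit constants. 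The factor $2\pi\sqrt3$ in the statement should come from these constants together with the Fourier normalisation. One technical point is that the closest point $m+P_1H$ must lie in $\Mcal(m)$, i.e.\ satisfy the positivity constraint; this follows from $H\geq c_0$ once the distance is small.

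\textbf{Dissipation bound.} Write $\nabla(\Delta H+H)=\nabla(\Delta w+w)$ and use $H\geq c_0$. The Fourier weights give
\begin{equation*}
\gamma\int H^3\abs{\nabla(\Delta H+H)}^2\geq \gamma c_0^3\sum (k^2+l^2)(k^2+l^2-1)^2\abs{w_{k,l}}^2\geq 2\gamma c_0^3\,\tfrac{E}{2\pi^2}\cdot(\text{const}),
\end{equation*}
using $(k^2+l^2)(k^2+l^2-1)\geq 2$ on the relevant modes. With the constants tracked, this yields $-\gamma\int H^3|\nabla(\Delta H+H)|^2\leq -2\gamma c_0^3E$. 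That rate is exactly what produces $\ee^{-\gamma c_0^3 t}$ for $\sqrt E$.

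\textbf{Gravity term (main obstacle).} Integrate by parts in $\vartheta$:
\begin{equation*}
\Bigl|\delta\int (H^3\cos(\vartheta+t))_\vartheta(\Delta H+H)\Bigr|=\Bigl|\delta\int H^3\cos(\vartheta+t)\,\partial_\vartheta(\Delta w+w)\Bigr|\leq \delta\norm{H}_{L_6}^3\norm{\nabla(\Delta w+w)}_{L_2}.
\end{equation*}
By Young's inequality this is at most $\tfrac{\gamma c_0^3}{2}\norm{\nabla(\Delta w+w)}_{L_2}^2+C\delta^2\norm{H}_{L_6}^6$, and the first part is absorbed into the dissipation. The difficulty is bounding $\norm{H}_{L_6}$ uniformly in time without an a priori $H^1$ bound. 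I would control it by $\norm{H}_{H^1}\lesssim m+\abs{P_1H}+\sqrt E$ in two dimensions. The kernel part $P_1H$ is bounded because positivity and conservation of mass give $\norm{H}_{L_1}=4\pi^2 m$, and $P_1H$ consists of finitely many Fourier coefficients, each bounded by $\norm{H}_{L_1}$. This leads to
\begin{equation*}
E'\leq -\gamma c_0^3E+C\delta^2(1+E^3).
\end{equation*}
A continuity (bootstrap) argument then keeps $E\leq 2\max(E[h_0],1)$ as long as $\delta<\delta_0$, with $\delta_0$ depending on $E[h_0]$, $m$, $\gamma$, $c_0$. Gronwall's inequality gives $E(t)\leq E[h_0]\ee^{-\gamma c_0^3t}+C\delta^2$. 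Taking square roots, using $\sqrt{a+b}\leq\sqrt a+\sqrt b$, and applying the norm equivalence from the first step yields the claimed estimate. The constants need some care to reach exactly the exponent $\gamma c_0^3$; if absorbing half of the dissipation costs a factor in the rate, I would instead absorb only an $\eps$-fraction and use the strict spectral gap to recover the rate.
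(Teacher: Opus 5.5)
Your proposal is correct and follows the paper's proof essentially step by step: the energy is comparable to $\dist_{H^1}(H,\Mcal)^2$ via the Fourier modes with $k^2+l^2\geq 2$; the gravity term is handled by integration by parts and Young's inequality; $\norm{H}_{H^1}$ is controlled by the mass, positivity and $E$; a continuity argument keeps $E$ bounded; and the conclusion follows from Gronwall and square roots. The only slip is in the rate constant: tracking the normalisation gives $\norm{\nabla(\Delta H+H)}_{L_2}^2=4\pi^2\sum_{k,l} n(n-1)^2\abs{H_{k,l}}^2\geq 4E[H]$ (because $2n(n-1)\geq 4$ for $n=k^2+l^2\geq 2$), which is exactly the paper's inequality, so absorbing half of the dissipation still leaves $-2\gamma c_0^3E[H]$ and gives $\ee^{-\gamma c_0^3 t}$ for $\sqrt{E}$ directly, whereas your $\eps$-fraction remedy with your constant $2$ would only give the rate $(1-\eps)\gamma c_0^3$.
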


\begin{proof}
The $H^1$-distance of a function to the manifold $\mathcal{M}$ is equivalent to the square root of the energy functional of this function. Indeed, they satisfy the relation
\begin{equation}\label{eq:energy-manifold-distance-equivalence}
    \frac{1}{\pi\sqrt{2}}\sqrt{E[H]} \leq \dist_{H^1}(H,\mathcal{M}) = \norm{P_{\geq 2}H}_{H^1} \leq \sqrt{6 E[H]}.
\end{equation}
The proof consequently relies on the dissipation of energy. While for $\delta>0$ the energy $E$ does not in general dissipate for all times, we will see that the error in the dissipation can be neglected as long as $\delta$ is small enough compared to $E[H]$. This changes as soon as the solution $H$ reaches a certain $\delta$-neighbourhood of $\mathcal{M}$ -- the dissipation and the error term introduced by the positive $\delta$ begin to balance and no further convergence to $\mathcal{M}$ can be expected. This idea is made rigorous by the following estimates:

Differentiating the energy leads to
\begin{equation*}
    \begin{split}
    \frac{\dd}{\dd t} E[H]
    &=
    -
    \gamma\, \int_{\Tbb^2} H^3 |\nabla(\Delta H + H)|^2 \dd\vartheta \dd \zeta
    +
    \delta \int_{\T^2} H^3 \cos(\vartheta + t) (\Delta H + H)_\vartheta \dd \vartheta \dd \zeta
    \\
    & 
    \leq
    -\gamma c_0^3 \|\nabla(\Delta H + H)\|_{L_2}^2
    +
    \delta \|H^3\|_{L_2} \|\nabla(\Delta H + H)\|_{L_2}
    \\
    &
    \leq
    -\gamma c_0^3 \|\nabla(\Delta H + H)\|_{L_2}^2
    +
    C \delta \|H\|_{H^1}^3 \|\nabla(\Delta H + H)\|_{L_2},
    \end{split}
\end{equation*}
where we used $\|H^3\|_{L_2} = \|H\|_{L_6}^{3} \leq C \|H\|_{H^1}^3$.
In order to absorb the term $\|\nabla(\Delta H + H)\|_{L_2}$ of the second summand into the first one, we apply Young's (weighted) inequality to obtain
\begin{equation*}
    C \delta \|H\|_{H^1}^3 \|\nabla(\Delta H + H)\|_{L_2}
    \leq
    \frac{C^2 \delta^2}{2 \gamma c_0^3}
    \|H\|_{H^1}^6
    +
    \frac{\gamma c_0^3}{2}
    \|\nabla(\Delta H + H)\|_{L_2}^2
\end{equation*}
and hence
\begin{equation}\label{eq:energy_derivative_estimate}
    \begin{split}
        \frac{\dd}{\dd t} E[H]
        & \leq
        -\frac{\gamma c_0^3}{2} \|\nabla(\Delta H + H)\|_{L_2}^2
        +
        \frac{C^2 \delta^2}{2 \gamma c_0^3}
        \|H\|_{H^1}^6.
    \end{split}
\end{equation}
The squared $L_2$-norm of $\nabla (\Delta H + H)$ can now be estimated from below by the energy functional:
\begin{equation}\label{eq:estimate-dissipation-against-energy}
    \norm{\nabla (\Delta H + H)}_{L_2}^2 \geq 4 E[H].
\end{equation}
Moreover, we can also control the $H^1$-norm of $H$ by means of the (conserved) zeroth Fourier coefficient $H_{0,0}$, the constant $c_0>0$ and the energy of $H$:
\begin{equation*}
\begin{aligned}
    \norm{H}_{H^1}^2 & \leq 4\pi^2\left(H_{0,0}^2 + 2 \left(\abs{H_{1,0}}^2 + \abs{H_{-1,0}}^2 + \abs{H_{0,1}}^2 + \abs{H_{0,-1}}^2\right)\right) + \norm{P_{\geq 2} H}_{H^1}^2\\
    &\leq 4\pi^2 \left(H_{0,0}^2 + 4 \abs{H_{1,0}}^2 + 4 \abs{H_{0,1}}^2\right) + 6 E[H]
\end{aligned}
\end{equation*}
and since $H(\vartheta,\zeta)\geq c_0$ we have
\begin{equation*}
\begin{aligned}
    \abs{H_{k,l}} &= \abslr{\fint_{\T^2} H(\vartheta,\zeta) \ee^{-\ii k\vartheta} \ee^{-\ii l\vartheta}\dd\vartheta\dd\zeta} = \abslr{\fint_{\T^2} (H(\vartheta,\zeta)-c_0) \ee^{-\ii k\vartheta} \ee^{-\ii l\vartheta}\dd\vartheta\dd\zeta}\\
    &\leq \fint_{\T^2}\abs{H(\vartheta,\zeta) - c_0} \dd\vartheta\dd\zeta 
    = H_{0,0} - c_0,
\end{aligned}
\end{equation*}
hence
\begin{equation}\label{eq:control-H1-norm-by-energy}
    \norm{H}_{H^1}^2 \leq 4\pi^2 \left(H_{0,0}^2 + 8 \big(H_{0,0} - c_0\big)^2\right) + 6 E[H].
\end{equation}
Using \eqref{eq:estimate-dissipation-against-energy} and \eqref{eq:control-H1-norm-by-energy} in \eqref{eq:energy_derivative_estimate}, we conclude the differential inequality
\begin{equation}\label{eq:differential_inequality}
    \frac{\dd}{\dd t} E[H]
    \leq
    -2\gamma c_0^3 E[H]
    +
    \frac{C^2 \delta^2}{2 \gamma c_0^3}
    \left(4\pi^2 \left(H_{0,0}^2 + 8\big(H_{0,0} - c_0\big)^2\right) + 6 E[H]\right)^3.
\end{equation}
Now, suppose $\delta>0$ satisfies
\begin{equation*}
    \delta^2 < \frac{4 \gamma^2 c_0^6 E[h_0]}{C^2\left(4\pi^2\left(H_{0,0}^2 + 8\big(H_{0,0}-c_0\big)^2\right) + 6 E[h_0]\right)^3}.
\end{equation*}
Then, $\frac{\dd}{\dd t}E[H](0)$ is negative and it follows that $E[H](t)\leq E[h_0]$ for all $t\leq T$. Plugging this into the nonlinear term of \eqref{eq:differential_inequality} and applying Gronwall's lemma we find
\begin{equation*}
    E[H(t)] \leq \ee^{-2\gamma c_0^3 t}E[h_0] + \frac{C^2\delta^2}{4\gamma^2 c_0^6}\left(4\pi^2\left(H_{0,0}^2 + 8\big(H_{0,0}-c_0\big)^2\right) + 6 E[h_0]\right)^3.
\end{equation*}
The result follows by taking the square root on both sides and applying estimate \eqref{eq:energy-manifold-distance-equivalence}.
\end{proof}


\subsection{Slow dynamics near the manifold $\Mcal(m)$} \label{sec:dynamics_close_to_slow_manifold}

In the light of Theorem \ref{thm:stability} and following remark \ref{rem:exp-conv-to-travelling-wave} we already know that for $\delta=0$ solutions to the original rimming-flow equation \eqref{eq:reform-PDE} converge exponentially to a \emph{travelling wave with speed one} (matching the rotation speed of the cylinder). Changing to the rotating coordinate system of this section, these travelling wave profiles become \emph{steady states} of the transformed equation \eqref{eq:PDE_transformed}. Thus, solutions to \eqref{eq:PDE_transformed} with $\delta=0$ converge exponentially to single points on $\Mcal$ provided their initial values satisfy appropriate conditions. We suspect that something similar is true in the case of $0<\delta\ll 1$: The exponential convergence holds only up to an error of order $\delta$ and the limit point on $\Mcal$ isn't actually fixed but describes on the time scale $\tau = \delta^2 t$ a slow smooth curve
\begin{equation*}
    H_0(\tau, \vartheta,\zeta) = H_0(\delta^2 t, \vartheta, \zeta) = H_{0,0} + a_{-1}(\delta^2 t) \ee^{-\ii\vartheta} + a_1(\delta^2 t) \ee^{\ii\vartheta} + b(\delta^2 t) \cos(\zeta) \in \Mcal
\end{equation*}
on the manifold $\Mcal$. We expect such dynamics to remain valid on a long time interval of scale $1/\delta^2$. In the case $\ell < \pi$ of a \emph{short cylinder}, the only critical eigenmodes are $\ee^{\pm\ii\vartheta}$. Thus, the problem reduces to the cross-sectional case with only coefficients $a_1$ and $a_{-1}$. See Section \ref{sec:numerics} for numerical simulations and \cite{jlv2023} for a rigorous result in this case. In the present 2D case however, a lot of the estimates from the 1D case break down due to worse Sobolev embeddings for terms appearing due to the quasilinear nature of the equation, and a rigorous justification of what follows remains open. We thus proceed to investigate the dynamics on a purely formal level. In particular, we derive a system of ordinary differential equations for the coefficients $a_{-1}, a_1 \in\C$, $b\in\R$ governing the curve $H_0$. The derivation of this system of ODEs follows the lines of the one-dimensional case in \cite{jlv2023}. We present it again for the convenience of the reader.

We start with the following ansatz:
\begin{equation*}
    H(t,\vartheta, \zeta) = H_0(\tau,\vartheta,\zeta) + \delta R_0(t,\vartheta,\zeta),
\end{equation*}
where $H_0$ is the curve on $\Mcal$ and $\delta R_0$ is some small error of order $\delta$. Inserting this ansatz into equation \eqref{eq:PDE_transformed}, we obtain
\begin{equation}\label{eq:PDE_with_ansatz}
    \delta^2H_{0,\tau}+\delta R_{0,t}+\delta\gamma\Acal\left((H_0+\delta R_0)^3\right)R_0=\delta\left((H_0+\delta R_0)^3\cos(\vartheta+t)\right)_\vartheta.
\end{equation}
Note that the term $\gamma\Acal\left((H_0+\delta R_0)^3\right)H_0$ vanishes since $H_0\in\Mcal$.

The approximation $H_0$ does only depend on the slow time scale $\tau=\delta^2 t$. This can be seen by inserting the more general ansatz $H(t,\vartheta,\zeta) = H_0(t,\vartheta,\zeta) + \delta R_0(t,\vartheta,\zeta)$ instead and obtaining to leading order
\begin{equation}\label{eq:orig-equation-delta0}
    H_{0,t}+\gamma\Acal\left(H_0^3\right)H_0=0,
\end{equation}
i.e. the original equation for $\delta=0$. $\Mcal$ then consists exactly of the steady states of \eqref{eq:orig-equation-delta0} and we know from Theorem \ref{thm:stability} that these steady states are exponentially stable. Thus, we do not expect interesting dynamics on $\Mcal$ in the fast time scale $t$. On the other hand, if we assumed the curve $H_0$ on $\Mcal$ depended only on the intermediate time scale $\tilde\tau = \delta t$ we would formally obtain $H_{0,\tilde\tau} = 0$ and hence we expect interesting dynamics to appear only on an even slower time scale.

We proceed by dividing equation \eqref{eq:PDE_with_ansatz} by $\delta$ to obtain the following equation for the error $R_0$, depending on the curve $H_0$ which is as of yet undetermined:
\begin{equation}\label{eq:error_equation_R0}
\begin{aligned}
   R_{0,t} + \gamma\Acal\left((H_0+\delta R_0)^3\right)R_0 =
   &- \delta H_{0,\tau} + \left(H_0^3\cos(\vartheta+t)\right)_\vartheta\\
   &+ \delta\left(\left(3H_0^2R_0 + 3\delta H_0R_0^2 + \delta^2R_0^3\right)\cos(\vartheta+t)\right)_\vartheta.
\end{aligned}
\end{equation}

\begin{remark}\label{rem:delta-squared-argument}
    In order to motivate the following steps of the derivation let us first consider the simplified linear error equation
    \begin{equation}\label{eq:simplified_error_equation}
        R_{t} + \gamma\Acal(b)R = \delta^\alpha F
    \end{equation}
    with inhomogeneity of order $\delta^\alpha$ and we assume that $b\geq b_0>0$ is bounded away from zero and $\norm{F}_{L_2}\leq C$. Using the same energy method from Theorem \ref{thm:convergence-to-manifold}, we can deduce
    \begin{equation}\label{eq:simplified_functional_estimate}
        \frac{\dd}{\dd t} E[R] \leq -\frac{b_0}{2}\norm{\nabla(\Delta R + R)}_{L_2}^2 + C \delta^{2\alpha}
    \end{equation}
    and hence
    \begin{equation*}
        \norm{P_{\geq 2}R(t)}_{H^1} \leq C(\ee^{-\omega t} + \delta^\alpha)
    \end{equation*}
    for some $\omega>0$.
    Given sufficient bounds on $b$ and $F$ (e.g. $\norm{F}_{H^2}\leq C$) this estimate can be improved to
    \begin{equation*}
        \norm{P_{\geq 2}R(t)}_{H^4} \leq C(\ee^{-\omega t} + \delta^\alpha)
    \end{equation*}
    by testing the $P_{\geq 2}$-projected version of equation \eqref{eq:simplified_error_equation} with $\Delta^4 P_{\geq 2}R_0$ and using the estimate for $\norm{P_{\geq 2}R}_{H^1}$. The main difficulty is thus to control the Fourier coefficients $R_{\pm 1, 0}$ and $R_{0,\pm 1}$. For this we can project with $P_1$ \eqref{eq:simplified_error_equation} onto these critical Fourier modes and use the fundamental theorem of calculus to estimate
    \begin{equation}\label{eq:simplified_eq_estimate}
    \begin{aligned}
        \norm{P_1 R(t)}_{H^4}
        &\leq \norm{P_1 R(0)}_{H^4} + \gamma\int_0^t \norm{P_1\Acal(b)R(s)}_{L_2} \dd s + C \delta^\alpha t\\
        &\leq \norm{P_1 R(0)}_{H^4} + \gamma\int_0^t \norm{P_{\geq 2}R(s)}_{H^4} \dd s + C \delta^\alpha t\\
        &\leq \norm{P_1 R(0)}_{H^4} + C(1 + \delta^\alpha t).
    \end{aligned}
    \end{equation}
    Now, the approximative solution $H_0$ in our original equation evolves on the time slow time scale $\tau=\delta^2 t$. This means we would like to bound the error $R$ on the time scale $\sim 1/\delta^2$. Considering the estimate \eqref{eq:simplified_eq_estimate}, this is only possible if $\alpha\geq 2$.
\end{remark}

Remark \ref{rem:delta-squared-argument} suggests that the right hand side of \eqref{eq:error_equation_R0} is not yet sufficiently small and we need to refine it further. For this purpose our strategy will consist of the following steps:

\begin{enumerate}
    \item[(1)] We consider the error equation \eqref{eq:error_equation_R0} to leading order in $\delta$ and find a particular globally bounded solution $H_1$ with help of the Fredholm alternative.
    \item[(2)] We refine our error $R_0$ by setting $R_0 = H_1 + R_1$ and insert this into the original error equation \eqref{eq:error_equation_R0}. This cancels the leading inhomogeneity term $\big(H_0^3 \cos(\vartheta+t)\big)_\vartheta$, yielding a better error equation for $R_1$.
    \item[(3)] We rewrite the equation obtained in step 2 and handle all remaining terms that are only of order $\delta$.
    \item[(4)] Choose the curve $H_0(\tau)\in\Mcal$ such that all secular terms are cancelled.
\end{enumerate}

{\noindent\bfseries Step 1: } The leading-order equation in $\delta$ of \eqref{eq:error_equation_R0} reads:
\begin{equation}\label{eq:error_equation_R0_leading_order}
   H_{1,t} + \gamma\Acal\left(H_0^3\right)H_1 =
   \left(H_0^3\cos(\vartheta+t)\right)_\vartheta.
\end{equation}
To find a globally bounded particular solution we use the ansatz
\begin{equation}\label{eq:ansatz-H1}
    H_1(t,\vartheta,\zeta) = G_1(\tau,\vartheta,\zeta) \,\ee^{\ii t} + \overline{G_1(\tau,\vartheta,\zeta)} \,\ee^{-\ii t}.
\end{equation}
Inserting this into \eqref{eq:error_equation_R0_leading_order}, we obtain for $G_1$:
\begin{equation}\label{eq:G1-equation}
    \ii \, G_1 + \gamma\,\Acal(H_0^3) G_1 = \dfrac{1}{2}\big(H_0^3\ee^{\ii\vartheta}\big)_\vartheta.
\end{equation}
By means of the Fredholm alternative, we show that this equation admits a unique solution $G_1$:

\begin{lemma}\label{lem:Fredholm-solvability}
    Let $\lambda\in\R\setminus\{0\}$ and let $b\geq b_0>0$ be a smooth function that is bounded away from zero. Then, $\lambda\ii + \Acal(b) : \Hev{3}(\T^2) \to \Hev{-1}(\T^2)$ is invertible. Moreover, if $f\in\Hev{k}(\T^2)$, then we have $(\lambda\ii + \Acal(b))^{-1}f \in \Hev{k+4}(\Tbb)$ for all $k\geq 0$.
\end{lemma}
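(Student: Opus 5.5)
We prove Lemma \ref{lem:Fredholm-solvability} by a Fredholm argument in the weak $H^3$--$H^{-1}$ setting, followed by elliptic bootstrapping. Write $T_\lambda \coloneq \lambda\ii + \Acal(b)$ with $\Acal(b)v = \div\bigl(b\nabla(\Delta v + v)\bigr)$.

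\textbf{Step 1: Fredholm index zero.} We first show that $T_\lambda\colon \Hev{3}(\T^2)\to\Hev{-1}(\T^2)$ is Fredholm of index zero. Since $b$ is smooth, $\Acal(b)$ maps $\Hev{3}$ boundedly into $\Hev{-1}$, and it preserves evenness in $\zeta$ because $b$ is even. Split
\begin{equation*}
    T_\lambda = \div(b\nabla\Delta\,\cdot) + K, \qquad K v = \lambda\ii v + \div(b\nabla v).
\end{equation*}
The operator $K$ maps $\Hev{3}$ into $\Hev{1}$, which embeds compactly into $\Hev{-1}$, so $K$ is compact. For the principal part we use the energy pairing $\langle \div(b\nabla\Delta v),\, \Delta w\rangle$ together with $b\geq b_0$ and Lax--Milgram, exactly as in Proposition \ref{prop:generator}. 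This shows that $v\mapsto \div(b\nabla\Delta v) + \mu v$ is an isomorphism $\Hev{3}\to\Hev{-1}$ for $\mu>0$. Hence $T_\lambda$ is a compact perturbation of an isomorphism, and the Fredholm alternative reduces invertibility to injectivity.

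\textbf{Step 2: injectivity.} This is the key step. Suppose $T_\lambda v = 0$ with $v\in\Hev{3}(\T^2;\Cbb)$ and test with $\overline{\Delta v + v}\in\Hev{1}$, a legitimate pairing between $H^{-1}$ and $H^1$. Integrating by parts,
\begin{equation*}
    \lambda\ii\int v\,\overline{(\Delta v+v)} - \int b\,\abs{\nabla(\Delta v + v)}^2 = 0,
    \qquad
    \int v\,\overline{(\Delta v+v)} = -\norm{\nabla v}_{L_2}^2 + \norm{v}_{L_2}^2 \in\R.
\end{equation*}
Taking real parts gives $\int b\abs{\nabla(\Delta v+v)}^2 = 0$, so $\nabla(\Delta v + v)=0$ since $b\geq b_0>0$. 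Therefore $\Acal(b)v = 0$, and the equation collapses to $\lambda\ii v = 0$, which forces $v=0$ because $\lambda\neq 0$. In this way the degenerate kernel $\Mcal$ of $\Acal(b)$ is eliminated by the imaginary shift $\lambda\ii$. Combined with Step 1, $T_\lambda$ is invertible.

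\textbf{Step 3: regularity.} We bootstrap via the difference-quotient (Nirenberg) method on $\T^2$, where translations commute with derivatives and periodicity removes boundary terms. If $f\in\Hev{k}$ and $v = T_\lambda^{-1}f$, apply the difference quotient $D_h$ in $\vartheta$ or $\zeta$. Commuting with $b$ produces terms $\div(D_h b\,\nabla(\Delta v(\cdot+h)+v(\cdot+h)))$, which are controlled by $\norm{v}_{H^{3}}$ in $H^{-1}$ with constants depending on $\norm{b}_{C^1}$. The a priori estimate $\norm{v}_{H^3}\leq C\norm{T_\lambda v}_{H^{-1}}$, which follows from the bounded inverse, then bounds $D_h v$ in $H^3$ uniformly in $h$, so $v\in H^4$. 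Iterating raises regularity by one derivative at a time until $v\in\Hev{k+4}$ when $f\in \Hev{k}$, using $k+1$ derivatives of $b$ along the way; smoothness of $b$ provides these.

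The main technical point is setting up the weak formulation so that the test function $\Delta v + v$ is admissible. In Step 3 the only care needed is to handle the commutator terms in the correct negative-norm spaces.
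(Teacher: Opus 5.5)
Your proposal is correct and follows essentially the same route as the paper: you write the operator as a compact perturbation of a Lax--Milgram isomorphism and apply the Fredholm alternative, and you get injectivity exactly as the paper does, by pairing with $\overline{\Delta v + v}$ and taking real parts to force $\nabla(\Delta v+v)=0$. Your difference-quotient bootstrap for the $\Hev{k+4}$ regularity goes beyond the paper, which states that part but does not prove it; note only that $\zeta$-difference quotients of even functions are not even, so run Steps 1--2 on the full spaces $H^3(\T^2)\to H^{-1}(\T^2)$, where they hold verbatim.
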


Note that in equation \eqref{eq:G1-equation} $H_0(\tau)$ is by definition smooth for every $\tau$, and hence by Lemma \ref{lem:Fredholm-solvability} also $G_1(\tau)$ is smooth.

\begin{proof}
    We prove existence by means of the Fredholm alternative. We define the sesquilinear form $B:\Hevd{3}(\T^2)\times\Hevd{3}(\T^2) \to \C$ by
    \begin{equation*}
        B(u,v) \coloneq (\mu + \ii\lambda) \fint_{\T^2} \nabla u\cdot \nabla\bar v \dd\vartheta\dd\zeta + \fint_{\T^2} b\,\nabla\big(\Delta u + u\big)\cdot\nabla\Delta\bar v\dd\vartheta\dd\zeta.
    \end{equation*}
    $B$ is continuous and and for $\mu\geq\frac{B_0^2}{2b_0}$ it is also (uniformly) coercive:
    \begin{equation*}
        \Re B(u,u) \geq \mu \norm{\nabla u}_{L_2}^2 + b_0 \norm{\nabla\Delta u}_{L_2}^2 - B_0\norm{\nabla u}_{L_2} \norm{\nabla\Delta u}_{L_2} \geq \tfrac{b_0}{2} \norm{\nabla\Delta u}_{L_2}^2.
    \end{equation*}
    Thus, we may apply the Theorem of Lax--Milgram to conclude the existence of a Banach space isomorphism $\Phi:\Hevd{3}(\T^2)\to\Hevd{3}(\T^2)$ such that
    \begin{equation*}
        B(\Phi u, v) = \fint_{\T^2} \nabla\Delta u\cdot\nabla\Delta\bar v\dd\vartheta\dd\zeta
    \end{equation*}
    for all $u,v\in\Hevd{3}(\T^2)$. Now, we denote the inverse Laplacian on $\T^2$ by $\Delta^{-1}:\Hevd{-1}(\T^2)\to\Hevd{3}(\T^2)$ and define the compact operator $K\coloneq \Phi\circ \Delta^{-2} : \Hevd{-1}(\T^2)\to\Hevd{-1}(\T^2)$. In order to apply the Fredholm alternative, we show that the homogeneous equation $u - \mu K u = 0$ only admits the trivial solution $u = 0$. Indeed, for any such solution we have $u\in\Hevd{3}(\T^2)$ and
    \begin{equation*}
    \begin{aligned}
        0 &= B(u-\mu Ku,v) = B(u,v) - \mu B\big(\Phi \Delta^{-2} u,v\big) = B(u,v) - \mu \fint_{\T^2} \nabla u \cdot\nabla\bar v\dd\vartheta\dd\zeta\\
        &= \dprlr{\ii\lambda u + \Acal(b)u\,,\,\Delta\bar v}_{\dot H^{-1},\dot H^1}
    \end{aligned}
    \end{equation*}
    for every $v\in\Hevd{3}(\T^2)$. We set $v = u + \Delta^{-1} u \in\Hevd{3}(\T^2)$ and consider only the real part to obtain
    \begin{equation*}
        \int_{\T^2} b\, \abs{\nabla (\Delta u + u)}^2 \dd\vartheta\dd\zeta = 0,
    \end{equation*}
    whence $\nabla(\Delta u + u) = 0$. This, in turn, implies $\Acal(b)u = 0$ and thus $u = 0$.

    Then, the Fredholm alternative implies the existence of a unique solution $u\in\Hevd{-1}(\T^2)$ to $u - \mu Ku = -Kf$ for each $f\in\Hevd{-1}(\T^2).$ Again, it follows actually $u\in\Hevd{3}(\T^2)$ and
    \begin{equation*}
        \ii\lambda u + \Acal(b)u = f\quad\text{in }\Hevd{-1}(\T^2).
    \end{equation*}
\end{proof}

{\noindent\bfseries Step 2: }
We refine the error $R_0$ by setting $R_0 = H_1 + R_1$, i.e. we improve our original ansatz to $H = H_0 + \delta R_0 = H_0 + \delta H_1 + \delta R_1$. Note that the only fast-time scale dependence of $H_1$ is given by the exponentials $\ee^{\pm\ii t}$ in \eqref{eq:ansatz-H1}, so $H_1$ is bounded on a local time interval in $\tau$ and hence also on a time interval of order $1/\delta^2$ in $t$. Thus, it remains to ensure that the new error $R_1$ is bounded on such a time interval. To this end we insert $R_0 = H_1 + R_1$ into \eqref{eq:error_equation_R0} and obtain the following new error equation for $R_1$:
\begin{equation*}
\begin{aligned}
    R_{1,t} &+ \gamma\,\Acal\big((H_0 + \delta H_1 + \delta R_1)^3\big)R_1
    + 3\delta\gamma\,\Acal\big(H_0^2 R_1\big)H_1 - 3\delta\big(H_0^2 R_1\cos(\vartheta + t)\big)_\vartheta\\
    =& -\delta H_{0,\tau} - 3\delta \gamma\Acal\big(H_0^2H_1\big)H_1 + 3\delta\big(H_0^2 H_1\cos(\vartheta + t)\big)_\vartheta\\
    &- \delta^2\gamma\,\Acal\big(3 H_0 (H_1 + R_1)^2 + \delta (H_1 + R_1)^3\big)H_1\\
    &+\delta^2\big(\big(3 H_0 (H_1 + R_1)^2 + \delta (H_1 + R_1)^3\big)\cos(\vartheta + t)\big)_\vartheta\\
    &-\delta^2 G_{1,\tau}\ee^{\ii t} - \delta^2 \overline{G_{1,\tau}}\ee^{-\ii t}.
\end{aligned}
\end{equation*}

{\noindent\bfseries Step 3:}
We can now rewrite the second and third term on the right-hand side to
\begin{equation*}
    - 3\delta \gamma\Acal\big(H_0^2H_1\big)H_1 + 3\delta\big(H_0^2 H_1\cos(\vartheta + t)\big)_\vartheta = \delta U + \delta V \ee^{2\ii t} + \delta\overline{V}\ee^{-2\ii t},
\end{equation*}
where
\begin{equation} \label{eq:U_V}
\begin{split}
    U \coloneq & -3\gamma\Acal\big(H_0^2 G_1\big)\overline{G_1} - 3\gamma\Acal\big(H_0^2\overline{G_1}\big)G_1 + \tfrac{3}{2}\big(H_0^2 G_1 \ee^{-\ii\vartheta}\big)_\vartheta + \tfrac{3}{2}\big(H_0^2 \overline{G_1} \ee^{\ii\vartheta}\big)_\vartheta,\\
    V \coloneq & -3\gamma\Acal\big(H_0^2 G_1)G_1 + \tfrac{3}{2}\delta\big(H_0^2 G_1 \ee^{\ii\vartheta}\big)_\vartheta
\end{split}
\end{equation}
depend on the slow time scale $\tau = \delta^2 t$ only.
Hence, the refined error equation for $R_1$ can be written as follows:
\begin{equation}\label{eq:error_equation_R1}
\begin{aligned}
    R_{1,t} &+ \gamma\,\Acal\big((H_0 + \delta H_1 + \delta R_1)^3\big)R_1
    + 3\delta\gamma\,\Acal\big(H_0^2 R_1\big)H_1 - 3\delta\big(\big(H_0^2 R_1\big)\cos(\vartheta + t)\big)_\vartheta\\
    =& -\delta\Big[ H_{0,\tau} - P_1U\Big] + \delta P_{\geq 2}U + \delta \Big[V\ee^{2\ii t} + \overline{V}\ee^{-2\ii t}\Big] + \Ocal(\delta^2).
\end{aligned}
\end{equation}
The error equation formulated in this specific form allows us to point out a some important observations:
\begin{itemize}
    \item The additional linear terms of order $\delta$ in the first line of \eqref{eq:error_equation_R1} are problematic. We will deal with them later.
    \item The approximate solution $H_0$ is still undetermined. The first summand on the right-hand side of \eqref{eq:error_equation_R1} could thus be removed completely by choosing $H_0$ to satisfy the ODE system $H_{0,\tau} = P_1 U$.
    However, we do not do so just yet because we have to deal with some of the other problematic terms first.
    \item For the second summand on the right-hand side of \eqref{eq:error_equation_R1} $\delta P_{\geq 2} U$ the strategy will be as follows: Firstly, we find a $W(\tau)$ such that $\gamma P_{\geq 2} \Acal\big(H_0^3\big)W = P_{\geq 2} U$. Then, we refine our error similarly to before and set $R_1 = \delta W + R_2$. This will remove the term $\delta P_{\geq 2} U$ at the cost of producing a new term $\delta\gamma P_1\Acal\big(H_0^3\big)W$ which we can subsequently include in the ODE system.
    \item The third summand is problematic. We deal with it by repeating the procedure of steps 1 and 2 from above (cf. ansatz \eqref{eq:ansatz-H1} with only very slight modifications.
    \item All remaining terms are of order $\delta^2$ or higher and thus (at least on a formal level) do not pose problems for the approximation.
\end{itemize}

Hence, we start removing the term $\delta P_{\geq 2} U$. As explained above we wish to find a solution $W(\tau,\vartheta,\zeta)$ to
\begin{equation}\label{eq:W-equation}
    \gamma P_{\geq2}\Acal\big(H_0^3\big)W = P_{\geq 2} U.
\end{equation}
This is possible by the following lemma:
\begin{lemma}
    Let $b\geq b_0>0$ be smooth function that is bounded away from zero. Then, the operator $ P_{\geq 2}\Acal(b) : P_{\geq 2}\Hev{3}(\T^2;\R) \to P_{\geq 2}\Hev{-1}(\T^2;\R)$ is invertible. Moreover, if $f\in\Hev{k}(\T^2;\R)$, then we have $(P_{\geq 2}\Acal(b))^{-1}f \in P_{\geq 2}\Hev{k+4}(\Tbb;\R)$ for all $k\geq 0$.
\end{lemma}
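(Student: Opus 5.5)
We prove this in the same way as Lemma \ref{lem:Fredholm-solvability}: a Lax--Milgram construction, then a Fredholm alternative. Here $\lambda=0$, and the projection $P_{\geq 2}$ takes over the role of the dissipative imaginary shift. Let $X_s \coloneq P_{\geq 2}\Hev{s}(\T^2;\R)$. This is the closed subspace of functions whose Fourier coefficients vanish for $k=l=0$ and on the critical modes $(\pm1,0)$ and $(0,\pm1)$. For $u\in X_3$ and $v\in X_3$, integration by parts gives
\begin{equation*}
    \dpr{P_{\geq 2}\Acal(b)u, v}_{H^{-1},H^1} = \dpr{\Acal(b)u,v} = \int_{\T^2} b\,\nabla(\Delta u+u)\cdot\nabla\Delta v\,\dd\vartheta\dd\zeta - \int_{\T^2} b\,\nabla(\Delta u+u)\cdot \nabla v\,\dd\vartheta\dd\zeta\cdot 0,
\end{equation*}
where the projection drops out because $v\in X_3$. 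The natural choice is to test with $v$ replaced by $\Delta v + v$, i.e. to use the sesquilinear form
\begin{equation*}
    B(u,v)\coloneq \int_{\T^2} b\,\nabla(\Delta u+u)\cdot\nabla(\Delta v+v)\,\dd\vartheta\dd\zeta ,
\end{equation*}
which is exactly $-\dpr{\Acal(b)u,\Delta v+v}$.

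The key step is coercivity of $B$ on $X_3$. The map $u\mapsto \Delta u+u$ is an isomorphism $X_s\to X_{s-2}$, because on $X_s$ the symbol $1-(k^2+l^2)$ satisfies $|k^2+l^2-1|\geq c(1+k^2+l^2)$. This is the same observation as in Lemma \ref{lem:equiv-norms}, and it is exactly where removing the kernel modes matters. Hence
\begin{equation*}
    B(u,u)\geq b_0\norm{\nabla(\Delta u+u)}_{L_2}^2\geq c\,b_0\norm{u}_{H^3}^2,\qquad u\in X_3 .
\end{equation*}
Continuity of $B$ follows from $b\in L_\infty$. Lax--Milgram then gives, for every $f\in X_{-1}$, a unique $u\in X_3$ with $B(u,v)=-\dpr{f,\Delta v+v}$ for all $v\in X_3$. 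Since $v\mapsto \Delta v+v$ maps $X_3$ onto $X_1$, this is equivalent to $\dpr{\Acal(b)u-f,w}=0$ for all $w\in X_1$, that is, $P_{\geq 2}\Acal(b)u=f$ in $X_{-1}$. The same identity also gives injectivity, so the operator is bijective. Real-valuedness and evenness in $\zeta$ are preserved because $b$ is real and the construction commutes with complex conjugation and with $\zeta\mapsto-\zeta$. The bounded inverse comes from the Lax--Milgram estimate. Unlike in Lemma \ref{lem:Fredholm-solvability}, no compact perturbation is needed, because $B$ is coercive directly.

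For the regularity statement I would bootstrap via difference quotients or Fourier truncation. Writing $P_{\geq2}\Acal(b)u = P_{\geq 2}[b\,\Delta^2 u] + (\text{lower order terms involving }\nabla b)$, I expect the main obstacle to be the commutator between $P_{\geq 2}$ (or $\nabla^{j}$) and multiplication by $b$. $P_{\geq2}$ differs from the identity only by a finite-rank projection onto smooth functions, so $(\mathrm{Id}-P_{\geq 2})[\Acal(b)u]$ is a smooth function. Its coefficients are bounded by $\norm{u}_{H^3}$, since they are pairings of $\Acal(b)u$ with smooth test functions. Therefore $\Acal(b)u = f + g$ with $g$ smooth, and it suffices to gain regularity for the full equation $\div(b\nabla(\Delta u+u))=f+g$. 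This is a second-order uniformly elliptic equation for $w=\Delta u+u$ with smooth coefficient $b$. Standard periodic elliptic regularity gives $w\in H^{k+2}$ whenever $f\in H^k$, and then $u\in X_{k+4}$ by the isomorphism above. Induction on $k$ handles every $k\geq 0$.
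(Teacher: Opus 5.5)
Your proof is correct and follows the paper's: the same form $B(u,v)=\int_{\T^2} b\,\nabla(\Delta u+u)\cdot\nabla(\Delta v+v)\dd\vartheta\dd\zeta$ on $P_{\geq 2}\Hev{3}$, which is coercive because the critical modes are removed, and Lax--Milgram with right-hand side $-\dpr{f,\Delta v+v}$. Your opening display is garbled (it should read $\dpr{\Acal(b)u,v}=-\int_{\T^2} b\,\nabla(\Delta u+u)\cdot\nabla v\dd\vartheta\dd\zeta$), but you never use it. The paper does not prove the regularity claim at all. Your reduction to the second-order elliptic equation $\div(b\nabla w)=f+g$ for $w=\Delta u+u$, with $g$ in the finite-dimensional span of the removed modes, is a sound way to fill that gap.
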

\begin{proof}
    This is an immediate consequence of the Lax--Milgram theorem: We define the bilinear form $B:P_{\geq 2}\Hev{3}(\T^2;\R)\times P_{\geq 2}\Hev{3}(\T^2;\R)$ by
    \begin{equation*}
        B(u,v) \coloneq \int_{\T^2} b\,\nabla(\Delta u + u)\cdot\nabla(\Delta v + v) \dd\vartheta\dd\zeta.
    \end{equation*}
    Clearly, it is continuous and coercive and hence for every $f\in P_{\geq 2}\Hev{-1}(\T^2;\R)$ there exists a unique $u\in P_{\geq 2}\Hev{3}(\T^2;\R)$ such that
    \begin{equation*}
        B(u,v) = -\dprlr{f, \Delta v + v}_{H^{-1}, H^1}
    \end{equation*}
    holds for all $v\in P_{\geq 2}\Hev{3}(\T^2;\R)$. Thence, it follows $P_{\geq 2}\Acal(b) u = P_{\geq 2}f = f$.
\end{proof}
With $W(\tau,\vartheta,\zeta)$ as given by \eqref{eq:W-equation} we refine our error $R_1$ again by setting $R_1 = \delta W + R_2$. Inserting this into \eqref{eq:error_equation_R1}, we obtain the following improved error equation for $R_2$:
\begin{equation}\label{eq:error_equation_R2}
\begin{aligned}
    R_{2,t} &+ \gamma\,\Acal\big((H_0 + \delta H_1 + \delta R_2 + \delta^2 W)^3\big)R_2
    + 3\delta\gamma\,\Acal\big(H_0^2 R_2\big)H_1 - 3\delta\big(\big(H_0^2 R_2\big)\cos(\vartheta + t)\big)_\vartheta\\
    =& -\delta\Big[ H_{0,\tau} - P_1U + \gamma P_1 \Acal\big(H_0^3\big)W\Big] + \delta \Big[V\ee^{2\ii t} + \overline{V}\ee^{-2\ii t}\Big] + \Ocal(\delta^2).
\end{aligned}
\end{equation}
Here, we absorbed further terms of order $\delta^2$ into $\Ocal(\delta^2)$.

We proceed to deal with the term $\delta \big[V\ee^{2\ii t} + \overline{V}\ee^{-2\ii t}\big]$. The strategy is analogous to the steps 1 and 2 from above. We ignore all but the leading-order terms for now but we include the term $\delta \big[V\ee^{2\ii t} + \overline{V}\ee^{-2\ii t}\big]$ and find a particular globally bounded solution $\delta H_2$ (of order $\delta$) to
\begin{equation*}
    \delta H_{2,t} + \gamma \Acal\big(H_0^3\big)\delta H_2 = \delta \big[V\ee^{2\ii t} + \overline{V}\ee^{-2\ii t}\big],
\end{equation*}
or, equivalently, a globally bounded solution to
\begin{equation*}
    H_{2,t} + \gamma \Acal\big(H_0^3\big) H_2 = V\ee^{2\ii t} + \overline{V}\ee^{-2\ii t}.
\end{equation*}
We do so by using an ansatz similar to \eqref{eq:ansatz-H1}:
\begin{equation*}
    H_2(t,\vartheta,\zeta) = G_2(\tau,\vartheta,\zeta) \ee^{2\ii t} + \overline{G_2(\tau,\vartheta,\zeta)} \ee^{-2\ii t}.
\end{equation*}
Thus, we need to solve
\begin{equation}\label{eq:G2-equation}
    2\ii G_2 + \gamma \Acal\big(H_0^3\big)G_2 = V,
\end{equation}
which we already know to be possible from Lemma \ref{lem:Fredholm-solvability}. Hence, we refine the error $R_2$ once more by inserting $R_2 = \delta H_2 + R_3$ into the error equation \eqref{eq:error_equation_R2} for $R_2$ and consequently obtain:
\begin{equation}\label{eq:error_equation_R3}
\begin{aligned}
    R_{3,t} &+ \gamma\,\Acal\big((H_0 + \delta H_1 + \delta R_3 + \delta^2 H_2 + \delta^2 W)^3\big)R_3
    + 3\delta\gamma\,\Acal\big(H_0^2 R_3\big)H_1 - 3\delta\big(\big(H_0^2 R_3\big)\cos(\vartheta + t)\big)_\vartheta\\
    =& -\delta\Big[ H_{0,\tau} - P_1U + \gamma P_1 \Acal\big(H_0^3\big)W\Big] + \Ocal(\delta^2).
\end{aligned}
\end{equation}

{\noindent\bfseries Step 4:}
Finally, we can determine the curve $H_0(\tau)\in\Mcal$: It must obey the ODE system
\begin{equation}\label{eq:ODE-system}
    H_{0,\tau} = P_1\left[U - \gamma\Acal\big(H_0^3\big)W\right].
\end{equation}
Then, $R_3$ satisfies
\begin{equation}\label{eq:error_equation_R3_without_secular_terms}
    R_{3,t} + \gamma\,\Acal\big((H_0 + \delta H_1 + \delta R_3 + \delta^2 H_2 + \delta^2 W)^3\big)R_3 + \delta\Lcal(t) R_3
    = \Ocal(\delta^2),
\end{equation}
where we define the time-dependent linear operator $\Lcal(t) \coloneq \ee^{\ii t}\Lcal_1 + \ee^{-\ii t}\Lcal_{-1}$ with
\begin{equation*}
    \Lcal_1 u \coloneq 3\gamma\,\Acal\big(H_0^2 u\big)G_1 - \tfrac{3}{2}\big(\big(H_0^2 u\big)\ee^{\ii\vartheta}\big)_\vartheta
\end{equation*}
and $\Lcal_{-1} \coloneq\overline{\Lcal_1}$.

Now, in the formal scheme of Remark \ref{rem:delta-squared-argument} the term $\delta\Lcal(t)R_3$ does still pose a problem as it may in general lead to exponential growth of $R_3$ on the time scale $\delta t$. We point out that this would not be an issue if $\Lcal_1$ depended only on the Fourier coefficients $u_{k,l}$ with $k^2 + l^2 \geq 2$ because in that case, testing \eqref{eq:simplified_error_equation} in Remark \ref{rem:delta-squared-argument} with $\Delta R + R$ would just produce an additional $C\delta\norm{\nabla(\Delta R + R)}_{L_2}^2$-term in the estimate \eqref{eq:simplified_functional_estimate} which can be absorbed for $\delta$ is small enough. With this in mind we transform the error equation \eqref{eq:error_equation_R3} by a near-identity transformation $R \coloneq (\id + \delta\Lambda(t)) R_3$, where $\Lambda(t)$ is to be chosen in a way that the transformed error equation for $R$ takes on the desired form. This idea is inspired by normal-form transformations. Note that $\Lcal(t)$ in the original equation depends on $t$ and thus $\Lambda(t)$ must depend on $t$, too.

Hence, we formally apply $\delta\Lambda(t)$ to \eqref{eq:error_equation_R3_without_secular_terms} and obtain
\begin{equation*}
    \big(\delta \Lambda(t)R_3)_t + \gamma\Acal\big(H_0^3\big)\delta\Lambda(t) R_3 + \delta\gamma\Lambda(t)\Acal\big(H_0^3\big)R_3 = \delta\Lambda'(t)R_3 + \gamma\Acal\big(H_0^3\big)\delta\Lambda(t) R_3 + \Ocal(\delta^2).
\end{equation*}
We add this equation to equation \eqref{eq:error_equation_R3_without_secular_terms} and use the definition of $R$ to find
\begin{equation*}
\begin{aligned}
    R_t &+ \gamma\,\Acal\big((H_0 + \delta H_1 + \delta R_3 + \delta^2 H_2 + \delta^2 W)^3\big)R + \delta\gamma\Lambda(t)\Acal\big(H_0^3\big) R\\
    &= \delta\Lambda'(t)R_3 + \gamma\Acal\big(H_0^3\big)\delta\Lambda(t) R_3 - \delta\Lcal(t) R_3 + \Ocal(\delta^2).
\end{aligned}
\end{equation*}
Formally, the linear operator $\delta\Lambda(t)\Acal\big(H_0^3\big)R$ is now fine in the sense that it only depends on the Fourier coefficients with $k^2 + l^2\geq 2$, and the first Fourier modes are cancelled immediately by $\Acal\big(H_0^3\big)$. Thus, we choose the operator $\Lambda(t)$ to satisfy
\begin{equation*}
    \Lambda'(t) + \gamma\Acal\big(H_0^3\big)\Lambda(t) = \Lcal(t) = \ee^{\ii t}\Lcal_1 + \ee^{-\ii t}\Lcal_{-1}.
\end{equation*}
With the ansatz $\Lambda(t) = \ee^{\ii t}\Lambda_1(t) + \ee^{-\ii t}\Lambda_{-1}$ we obtain from there
\begin{equation*}
    \big(\ii + \gamma\Acal\big(H_0^3\big)\big)\Lambda_1 = \Lcal_1
\end{equation*}
and thus we define $\Lambda_1 \coloneq \big(\ii + \gamma\Acal\big(H_0^3\big)\big)^{-1}\Lcal_1$. Note that it follows from Lemma \ref{lem:Fredholm-solvability} that this is actually a compact operator, justifying the previous arguments. This leads finally to the following error equation for $R$:
\begin{equation}\label{eq:error_equation_R}
    R_t + \gamma\,\Acal\big((H_0 + \delta H_1 + \delta R_3 + \delta^2 H_2 + \delta^2 W)^3\big)R + \delta\gamma\Lambda(t)\Acal\big(H_0^3\big) R = \Ocal(\delta^2).
\end{equation}

\begin{remark}
    A local-in-time solution in $\tau$ to the ODE system \eqref{eq:ODE-system} exists by the theorem of Picard--Lindelöf.
    In order to prove rigorously that $H_0$ is close to the full solution on a time interval of order $1$ in $\tau$ (or order $1/\delta^2$ in $t$, respectively), we would have to show that the error $R$ satisfying the error equation \eqref{eq:error_equation_R} remains bounded on this time interval.
\end{remark}

Another important property of the abstract ODE system \eqref{eq:ODE-system} is its rotational invariance with respect to the cylinder axis. This is not immediately obvious: In fact, the full rimming-flow PDE is only rotationally symmetric in the case without any gravity ($\delta = 0$), because the gravitational acceleration chooses a certain direction. However, the system \eqref{eq:ODE-system} is only an approximation of order $\delta$ and thus it is plausible that it retains the symmetry of the case $\delta = 0$. This is the content of the following lemma:

\begin{lemma}\label{lem:ODE-rotational-invariance}
    The system of ODEs \eqref{eq:ODE-system} is rotationally symmetric around the cylinder axis, i.e. for any solution $\tau\mapsto(a_1(\tau), b(\tau))\in\C\times\R$ of \eqref{eq:ODE-system} and every $\phi\in[0,2\pi)$, $(\ee^{\ii\phi}a_1(\tau), b(\tau))$ solves \eqref{eq:ODE-system} as well.
\end{lemma}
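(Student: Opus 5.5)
The plan is to prove the invariance by an equivariance argument rather than by computing the ODE explicitly. Introduce the rotation operator $(\mathcal{R}_\phi u)(\vartheta,\zeta) \coloneq u(\vartheta-\phi,\zeta)$. It acts on $\Mcal(m)$ by $m + a_{-1}\ee^{-\ii\vartheta} + a_1\ee^{\ii\vartheta} + b\cos(\zeta) \mapsto m + a_{-1}\ee^{\ii\phi}\ee^{-\ii\vartheta} + a_1 \ee^{-\ii\phi}\ee^{\ii\vartheta} + b\cos(\zeta)$, so that $(a_1,b)\mapsto(\ee^{-\ii\phi}a_1,b)$. Since $\phi$ is arbitrary, invariance under this family gives the claimed invariance under $a_1\mapsto \ee^{\ii\phi}a_1$. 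We must show that the right-hand side $\Fcal(H_0)\coloneq P_1[U-\gamma\Acal(H_0^3)W]$ of \eqref{eq:ODE-system} satisfies $\Fcal(\mathcal{R}_\phi H_0)=\mathcal{R}_\phi\Fcal(H_0)$. Then, for any solution $H_0(\tau)$, $\mathcal{R}_\phi H_0(\tau)$ is also a solution.

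\textbf{Step 1: the operators commuting with $\mathcal{R}_\phi$.} The following identities hold:
\begin{itemize}
\item $\mathcal{R}_\phi\Acal(b)v=\Acal(\mathcal{R}_\phi b)\mathcal{R}_\phi v$, since $\div$, $\nabla$ and $\Delta$ are translation invariant;
\item $\mathcal{R}_\phi(uv)=\mathcal{R}_\phi u\,\mathcal{R}_\phi v$ and $\mathcal{R}_\phi\partial_\vartheta=\partial_\vartheta\mathcal{R}_\phi$;
\item $P_1$ and $P_{\geq2}$ commute with $\mathcal{R}_\phi$, because $\mathcal{R}_\phi$ multiplies each Fourier mode $\ee^{\ii k\vartheta}\ee^{\ii l\zeta}$ by $\ee^{-\ii k\phi}$ and therefore preserves the mode sets.
\end{itemize}
By the uniqueness statements of Lemma~\ref{lem:Fredholm-solvability} and the Lax--Milgram lemma for $P_{\geq2}\Acal(b)$, the inverses $(\lambda\ii+\gamma\Acal(b))^{-1}$ and $(P_{\geq2}\Acal(b))^{-1}$ transform covariantly as well, for example $(\ii+\gamma\Acal(\mathcal{R}_\phi b))^{-1}\mathcal{R}_\phi=\mathcal{R}_\phi(\ii+\gamma\Acal(b))^{-1}$.

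\textbf{Step 2: tracking the explicit factors $\ee^{\pm\ii\vartheta}$ (the main obstacle).} The explicit factors in \eqref{eq:G1-equation} and \eqref{eq:U_V} come from the gravity term $\cos(\vartheta+t)$ and break the naive equivariance. Since $\ee^{\ii\vartheta}=\ee^{\ii\phi}\mathcal{R}_\phi\ee^{\ii\vartheta}$, the forcing in \eqref{eq:G1-equation} for $\mathcal{R}_\phi H_0$ equals $\ee^{-\ii\phi}\mathcal{R}_\phi$ applied to the original forcing. Hence $G_1[\mathcal{R}_\phi H_0]=\ee^{-\ii\phi}\mathcal{R}_\phi G_1[H_0]$ and $\overline{G_1}[\mathcal{R}_\phi H_0]=\ee^{\ii\phi}\mathcal{R}_\phi\overline{G_1}[H_0]$. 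Equivalently, this is the time shift $t\mapsto t+\phi$ in $H_1$, which reflects the fact that rotating the profile amounts to shifting the fast phase.

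Now insert these into $U$. Each term is either of the form $G_1\cdot\overline{G_1}$, whose phases $\ee^{-\ii\phi}\ee^{\ii\phi}$ cancel, or of the form $G_1\ee^{-\ii\vartheta}$ and $\overline{G_1}\ee^{\ii\vartheta}$. In the latter case the explicit factor contributes $\ee^{\ii\phi}$ or $\ee^{-\ii\phi}$ respectively, which again cancels the phase of $G_1$ or $\overline{G_1}$. Therefore $U[\mathcal{R}_\phi H_0]=\mathcal{R}_\phi U[H_0]$ exactly. This phase bookkeeping is the delicate point: one must check that every term of $U$ has total phase zero, that is, that $U$ is the averaged ($\ee^{0\cdot\ii t}$) part of the quadratic terms. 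This holds by construction, because $U$ collects exactly the $t$-independent terms while the phase-$\pm2$ terms were moved into $V$ and never enter \eqref{eq:ODE-system}.

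\textbf{Step 3: conclusion.} By Step 1, $W[\mathcal{R}_\phi H_0]=\mathcal{R}_\phi W[H_0]$ and $\Acal((\mathcal{R}_\phi H_0)^3)\mathcal{R}_\phi W=\mathcal{R}_\phi\Acal(H_0^3)W$. Together with Step 2 this gives $\Fcal(\mathcal{R}_\phi H_0)=\mathcal{R}_\phi\Fcal(H_0)$. Since $\tau\mapsto\mathcal{R}_\phi H_0(\tau)$ has $\tau$-derivative $\mathcal{R}_\phi H_{0,\tau}$, it solves \eqref{eq:ODE-system}. In coordinates, $(\ee^{-\ii\phi}a_1,b)$ is a solution, and replacing $\phi$ by $-\phi$ yields the statement of the lemma.
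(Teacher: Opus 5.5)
Your proof is correct and follows the same route as the paper. The paper sets $\tilde H_0(\xi,\zeta)=H_0(\xi+\phi,\zeta)$, notes $\tilde G_1=\ee^{-\ii\phi}G_1\vert_{\vartheta=\xi+\phi}$, and concludes that $U$, $W$ and hence the right-hand side of \eqref{eq:ODE-system} are equivariant; this is exactly your Steps 1--3, with your $\mathcal{R}_\phi$ being the paper's shift under $\phi\mapsto-\phi$, and with the commutation of $P_1$, $P_{\geq 2}$ and the uniqueness-based covariance of the inverses spelled out more carefully than the paper does. One small bookkeeping slip: with your convention and your own identity $\ee^{\ii\vartheta}=\ee^{\ii\phi}\mathcal{R}_\phi\ee^{\ii\vartheta}$, the forcing picks up $\ee^{+\ii\phi}$, so $G_1[\mathcal{R}_\phi H_0]=\ee^{\ii\phi}\mathcal{R}_\phi G_1[H_0]$, and the explicit factor $\ee^{-\ii\vartheta}$ contributes $\ee^{-\ii\phi}$ --- both signs in your Step 2 are flipped, but they cancel, so $U[\mathcal{R}_\phi H_0]=\mathcal{R}_\phi U[H_0]$ still holds.
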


\begin{proof}
    We define $\tilde a_1(\tau)\coloneq \ee^{\ii\phi}a_1(\tau)$ and
    \begin{equation*}
        \tilde H_0(\xi,\zeta) \coloneq H_0(\tilde a_1, b) = H_{0,0} + \tilde a_1 \ee^{\ii\xi} + \tilde a_{-1} \ee^{-\ii\xi} + b\cos(\zeta) = H_0(a_1, b)\Big\rvert_{\vartheta=\xi+\phi}.
    \end{equation*}
    Then, $\tilde G_1(\xi,\zeta) \coloneq \ee^{-\ii\phi} G_1(a_1,b)\big\vert_{\theta=\xi+\phi}$ solves the corresponding equation \eqref{eq:G1-equation}
    \begin{equation*}
        \ii\tilde G_1 + \gamma\Acal(\tilde H_0^3)\tilde G_1 = \ee^{-\ii\phi} \left(\ii G_1 + \gamma\Acal(H_0^3)G_1\right)\Big\vert_{\vartheta=\xi+\phi} = \frac{1}{2}\big(\tilde H_0^3 \ee^{\ii\xi}\big)_\xi,
    \end{equation*}
    whence $G_1(\tilde a_1, b) = \ee^{-\ii\phi} G_1(a_1, b)\big\vert_{\vartheta=\xi+\phi}$. We plug this into the definition of $U(a_1, b)$ in \eqref{eq:U_V} and obtain
    \begin{equation*}
    \begin{aligned}
        U(\tilde a_1, b)
        &= -3\gamma\Acal\big(\tilde H_0^2 G_1(\tilde a_1, b)\big)\overline{G_1(\tilde a_1, b)} + \tfrac{3}{2}\big(\tilde H_0^2 G_1(\tilde a_1, b) \ee^{-\ii\xi}\big)_\xi + \mathrm{c.c.}\\
        &= -3\gamma\Acal\big(H_0^2 G_1(a_1, b)\big)\overline{G_1(a_1, b)} + \tfrac{3}{2}\big(H_0^2 G_1(a_1, b) \ee^{-\ii\vartheta}\big)_\vartheta\Big\vert_{\vartheta = \xi+\phi} + \mathrm{c.c.}\\
        &= U(a_1,b)\Big\vert_{\vartheta = \xi+\phi},
    \end{aligned}
    \end{equation*}
    which in turn implies $W(\tilde a_1, b) = W(a_1, b)\big\vert_{\vartheta = \xi + \phi}$ and hence \eqref{eq:ODE-system} becomes
    \begin{equation*}
    \begin{aligned}
        \tilde a_1' \ee^{\ii\xi} + \tilde a_{-1}' \ee^{-\ii\xi} + b'\cos(\zeta)
        &= P_1\left[U(\tilde a_1, b) - \gamma\Acal(\tilde H_0^3)W(\tilde a_1, b)\right]\\
        &= P_1\left[U(a_1, b) - \gamma \Acal\big(H_0^3\big)W(a_1, b)\right]\Big\vert_{\vartheta = \xi + \phi}
    \end{aligned}
    \end{equation*}
    Using the definition of $\tilde a_1$ and substituting $\xi = \vartheta-\phi$, we find that this is equivalent to the original ODE system \eqref{eq:ODE-system} for $a_1$ which is satisfied by assumption.
\end{proof}


\subsection{Linearisation of the ODE system \eqref{eq:ODE-system}}\label{sec:linearise-ode}
We compute the linearisation of \eqref{eq:ODE-system} near the trivial constant solution $H_0(\tau) = H_{0,0}$. To this end, we set for $\eps\ll 1$
\begin{equation*}
    H_0(\tau) = H_{0,0} + \eps a_1(\tau) \ee^{\ii \vartheta} + \eps a_{-1}(\tau) \ee^{-\ii \vartheta} + \eps b(\tau) \cos(\zeta)
\end{equation*}
and $G_1 = g_0 + \eps g_1 + \Ocal(\eps^2)$ where $G_1$ is the unique solution of equation \eqref{eq:G1-equation}:
\begin{equation*}
    \ii \, G_1 + \gamma\,\Acal(H_0^3) G_1 = \dfrac{1}{2}\big(H_0^3\ee^{\ii\vartheta}\big)_\vartheta.
\end{equation*}
Then, to leading order we immediately find $g_0 = \tfrac{H_{0,0}^3}{2} \ee^{\ii \vartheta}$
and hence to order $\eps$ we obtain
\begin{equation*}
    \ii g_1 + \gamma H_{0,0}^3 \Delta (\Delta g_1 + g_1)
    =
    3 \ii H_{0,0}^2 a_1 \ee^{2 \ii \vartheta}
    +
    \frac{3}{2} \ii H_{0,0}^2 b \cos(\zeta) \ee^{\ii \vartheta}.
\end{equation*}
From Lemma \ref{lem:Fredholm-solvability} it thus follows that
\begin{equation*}
    G_1(\tau,\vartheta,\zeta) = \frac{H_{0,0}^3}{2}\ee^{\ii\vartheta} + \frac{3 \eps H_{0,0}^2 a_1 (1 + 12 \gamma H_{0,0}^3 \ii)}{1 + 144 \gamma^2 H_{0,0}^6} \ee^{2 \ii \vartheta}
    +
    \frac{\frac{3}{2}\eps H_{0,0}^2 b (1 + 2 \gamma H_{0,0}^3 \ii)}{1 + 4 \gamma^2 H_{0,0}^6} \cos(\zeta) \ee^{\ii \vartheta} + \Ocal(\eps^2).
\end{equation*}
Thence, we can insert this approximate solution for $G_1$ into the definition of $U$ (cf. \eqref{eq:U_V}), namely
\begin{equation*}
    U \coloneq
    -3\gamma\Acal\big(H_0^2 G_1\big)\overline{G_1} - 3\gamma\Acal\big(H_0^2\overline{G_1}\big)G_1 + \tfrac{3}{2}\big(H_0^2 G_1 \ee^{-\ii\vartheta}\big)_\vartheta + \tfrac{3}{2}\big(H_0^2 \overline{G_1} \ee^{\ii\vartheta}\big)_\vartheta,
\end{equation*}
to obtain
\begin{equation*}
\begin{split}
    U 
    =
    &-
    \frac{27 \eps H_{0,0}^7 \gamma a_{-1} (1 - 12 \gamma H_{0,0}^3 \ii)}{1 + 144 \gamma^2 H_{0,0}^6} \ee^{-\ii \vartheta}
    -
    \frac{27 \eps H_{0,0}^7 \gamma a_{1} (1 + 12 \gamma H_{0,0}^3 \ii)}{1 + 144 \gamma^2 H_{0,0}^6} \ee^{\ii \vartheta}
    -
    \frac{9 \eps H_{0,0}^7 \gamma b}{2 + 8 \gamma^2 H_{0,0}^6} \cos(\zeta)
    \\
    &-
    \frac{9 \eps H_{0,0}^4 a_1 ( 12 \gamma H_{0,0}^3 - \ii)}{2(1 + 144 \gamma^2 H_{0,0}^6)} \ee^{\ii \vartheta}
    -
    \frac{9 \eps H_{0,0}^4 a_{-1} ( 12 \gamma H_{0,0}^3 + \ii)}{2(1 + 144 \gamma^2 H_{0,0}^6)} \ee^{-\ii \vartheta}
    +
    3 \eps \ii H_{0,0}^4 a_1 \ee^{\ii \vartheta}
    -
    3 \eps \ii H_{0,0}^4 a_{-1} \ee^{-\ii \vartheta}\\
    &+\Ocal(\eps^2).
\end{split}
\end{equation*}
Moreover, we have
\begin{equation*}
    \gamma P_{\geq 2} \Acal(H_0^3) W
    =
    P_{\geq 2} U
    =
    \Ocal(\eps^2)
\end{equation*}
and hence $W = \Ocal(\eps^2)$. Thus, for the system of ODEs for the coefficients $a_1\in\C$, $a_{-1} = \overline{a_1}$ and $b\in\R$ we obtain the following linearisation about the trivial steady state $a_1 = a_{-1} = 0,\ b=0$:
\begin{equation}\label{eq:linearised-ode}
\left\{
\begin{aligned}
    a_1'(\tau)
    &=
    -
    \frac{27 H_{0,0}^7 \gamma a_{1} (1 + 12 \gamma H_{0,0}^3 \ii)}{1 + 144 \gamma^2 H_{0,0}^6} 
    -
    \frac{9 H_{0,0}^4 a_1 ( 12 \gamma H_{0,0}^3 - \ii)}{2(1 + 144 \gamma^2 H_{0,0}^6)} 
    +
    3 \ii H_{0,0}^4 a_1 
    +
    \Ocal\big(\eps\big)
    \\
    b'(\tau) 
    &=
    -
    \frac{9 H_{0,0}^7 \gamma b}{2 + 8 \gamma^2 H_{0,0}^6} + \Ocal\big(\eps\big).
\end{aligned}
\right.
\end{equation}
We may rewrite $a_1(\tau) = \rho(\tau) \ee^{\ii\phi(\tau)}$ in polar coordinates, where $\rho \geq 0$ and $\phi$ are real-valued functions. Inserting this into the ODE \eqref{eq:linearised-ode}, we obtain
\begin{equation*}
\left\{
\begin{aligned}
    \rho'(\tau)
    &=
    -
    \frac{81\gamma H_{0,0}^7}{1 + 144 \gamma^2 H_{0,0}^6}
    \rho(\tau)
    +
    \Ocal\big(\eps\big)
    \\
    \phi'(\tau)
    &=
    \frac{108 \gamma^2 H_{0,0}^{10}}{1 + 144 \gamma^2 H_{0,0}^6}
    +
    \frac{15 H_{0,0}^4}{2(1 + 144 \gamma^2 H_{0,0}^6)}
    +
    \Ocal\big(\eps\big)
    \\
    b'(\tau) 
    &=
    -
    \frac{9 H_{0,0}^7 \gamma b}{2 + 8 \gamma^2 H_{0,0}^6} + \Ocal\big(\eps\big).
\end{aligned}
\right.
\end{equation*}
This linearisation is actually completely decoupled. In particular, we recover the eigenvalue
\begin{equation*}
    \lambda = -\ii + \delta^2 \left(-\frac{81\gamma m^7}{1 + 144\gamma^2 m^6} + \ii\frac{108\gamma^2 m^{10} + \tfrac{15}{2}m^4}{1 + 144\gamma^2 m^6}\right),
\end{equation*}
of which we have already computed the complex conjugate in the proof of Theorem \ref{thm:exp-stability-perturbed-steady-state}, even though that computation was only valid for the short cylinder case $\ell < \pi$ due to the non-uniqueness of solutions in the case critical case $\ell = \pi$. Moreover, we observe that transversal perturbations decay exponentially to zero. In view of Remark \ref{rem:exp-conv-to-travelling-wave} such solutions will converge on the slow time scale $\tau = \delta^2 t$ to a cylindrical shape. In the rotating coordinates $(\theta-t,\zeta)$ its axis spirals in clockwise direction towards the centre, although from the outside the cylinder still rotates in a counter-clockwise direction very quickly.

\subsection{Numerical simulations for the the ODE system \eqref{eq:ODE-system}}\label{sec:numerics}
A particular strength of the ODE system \eqref{eq:ODE-system} compared, for instance, to the stability result in Theorem \ref{thm:exp-stability-perturbed-steady-state}, is the fact that it remains valid even for solutions which are not necessarily close to the constant solution. It is thus natural to ask if the linear behaviour described above is a good description of the global behaviour. We approach this question by some numerical simulations. Note that, while the original rimming-flow equation is highly \emph{nonlinear}, for the simulation of the ODE system we only have to solve two \emph{linear} PDEs although the complete ODE system is still nonlinear. The solution to the linear PDEs \eqref{eq:G1-equation} and \eqref{eq:W-equation} is computed using the finite element library deal.II \cite{2023:arndt.bangerth.ea:deal} and the direct UMFPACK solver from \cite{davis_umfpack}.

Since $b(0) = 0$ implies that $b(\tau) = 0$ for all $\tau>0$, the problem reduces to a two-dimensional system in this case. In fact, the ODE derived in \cite{jlv2023} for a cross-section rimming-flow and solutions in the short cylinder case $\ell < \pi$ (where transversal perturbations decay exponentially on the fast time scale $t$) are completely described by the phase portrait in Figure \ref{fig:phase-portrait-b-zero}. Due to the rotational invariance of the ODE system from Lemma \ref{lem:ODE-rotational-invariance} the phase portrait is symmetric, too.
\begin{figure}[H]
    \centering
    \includegraphics[width=0.5\linewidth]{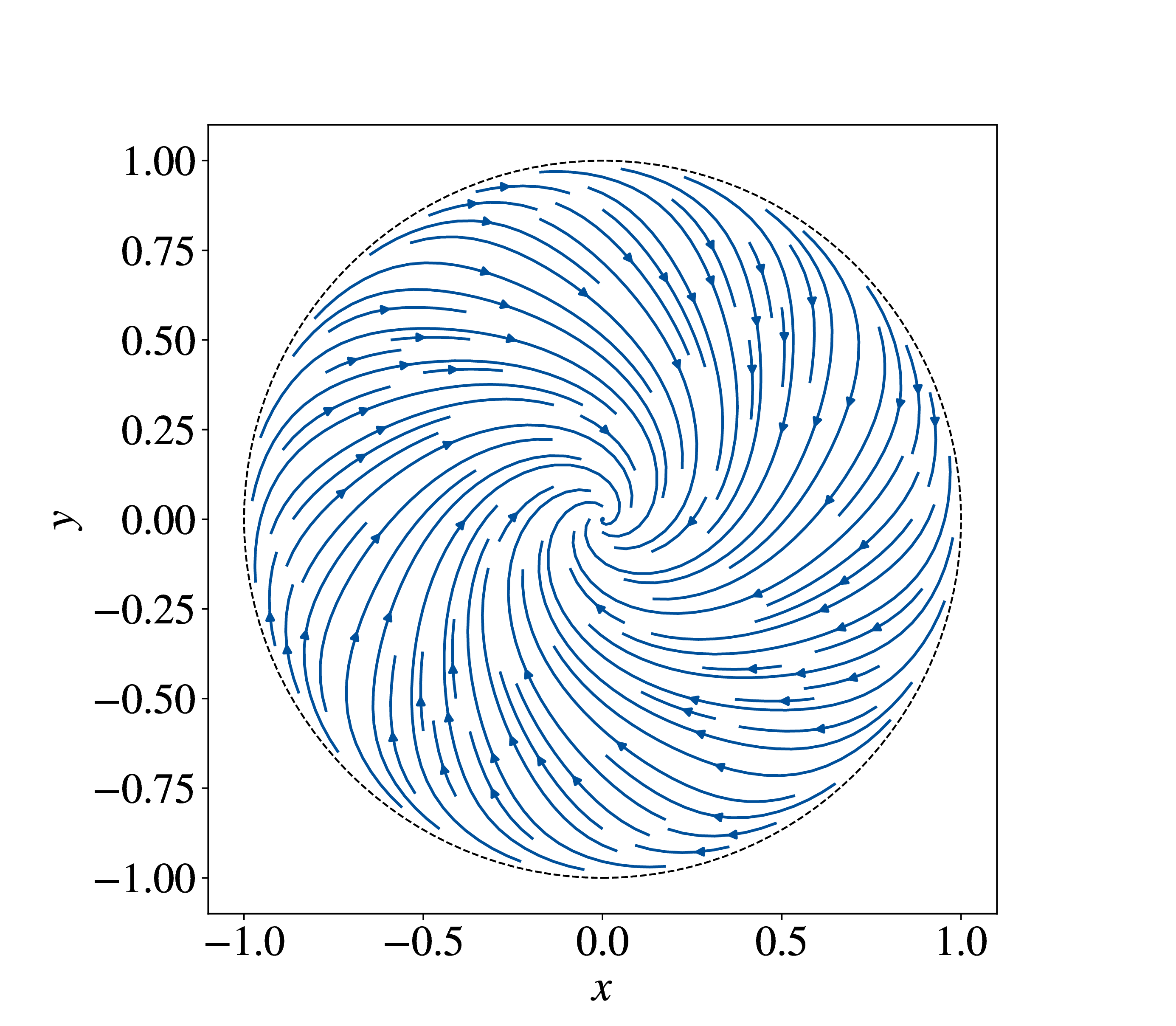}
    \caption{Phase portrait of the ODE system \eqref{eq:ODE-system} for $m = \gamma = 1$ when $H_0(\tau,\vartheta) = m - x(\tau)\cos(\vartheta) - y(\tau)\sin(\vartheta)$ is independent of $\zeta$: The fluid forms a circle in the cross-section of the cylinder. Its centre $(x(\tau),y(\tau))$ spirals inwards to the cylinder axis.}
    \label{fig:phase-portrait-b-zero}
\end{figure}

In Figure \ref{fig:snapshots_2d} we show the state of a cylindrical fluid profile at three distinct increasing times while its centre axis spirals inwards according to the phase portrait in Figure \ref{fig:phase-portrait-b-zero}.
\begin{figure}[H]
    \centering
    \begin{subfigure}[b]{0.32\textwidth}
        \centering
        \includegraphics[width=\textwidth]{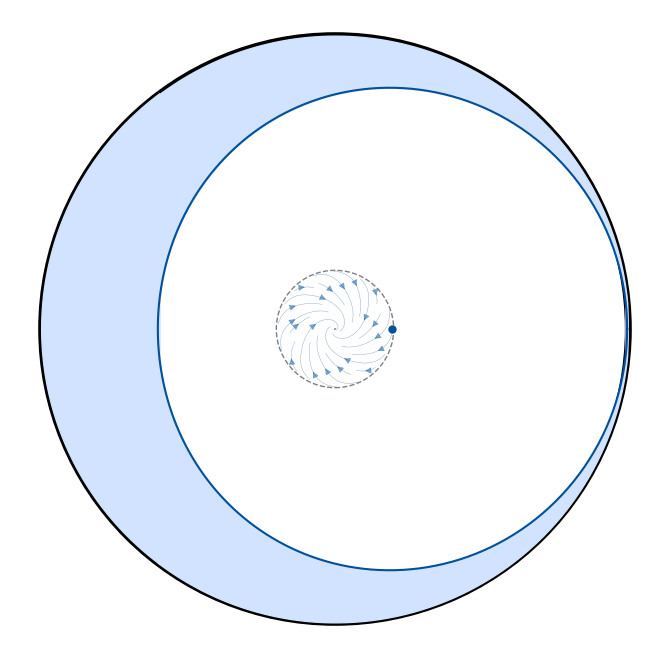}
    \end{subfigure}
    \begin{subfigure}[b]{0.32\textwidth}
        \centering
        \includegraphics[width=\textwidth]{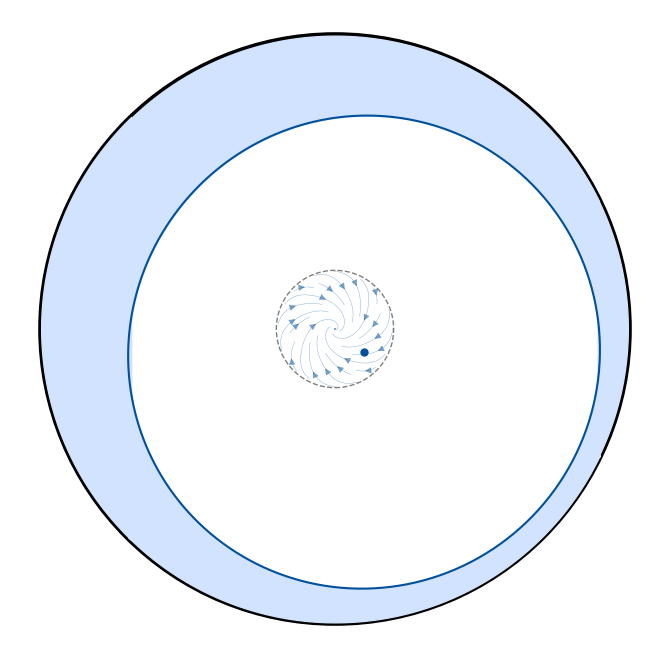}
    \end{subfigure}
    \begin{subfigure}[b]{0.32\textwidth}
        \centering
        \includegraphics[width=\textwidth]{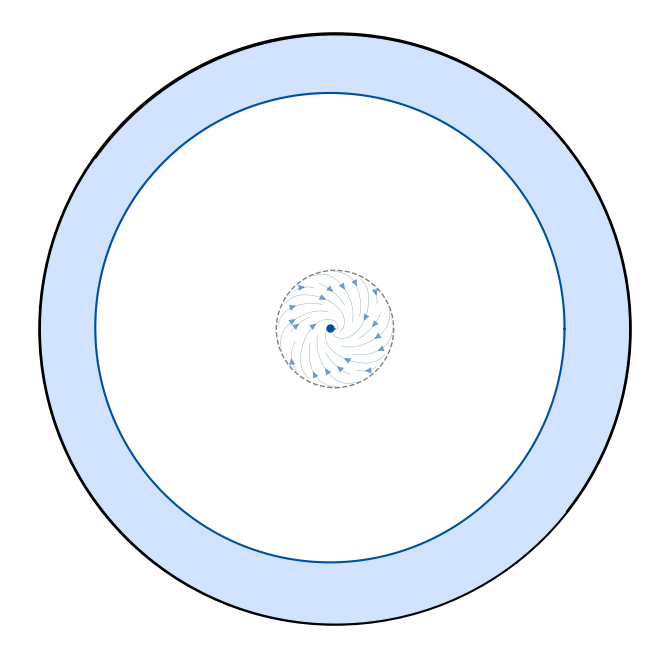}
    \end{subfigure}
    \caption{The circular shape of the fluid surface spirals towards the centre of the cylinder on the time scale $\tau = \delta^2 t$ according to the phase portrait of the system \eqref{eq:ODE-system}. Additionally, the whole cylinder rotates with angular velocity one (not shown in this figure).}
    \label{fig:snapshots_2d}
\end{figure}

If, on the other hand, $b\neq 0$, the dynamics are truly three-dimensional. However, since the system is still rotationally invariant in $a_1$ by Lemma \ref{lem:ODE-rotational-invariance}, we can plot a two-dimensional phase portrait of $b$ against amount by which the fluid profile is shifted from the cylinder axis $\abs{a_1}$, see Figure \ref{fig:phase-portrait-abs-a1}. In particular, we observe that neither $\abs{a_1(\tau)}$ nor $\abs{b(\tau)}$ must decrease monotonically. Instead, for certain solutions an uneven distribution of mass along the $\zeta$-axis can temporarily cause an increase of the displacement from the cylinder axis and vice versa.
\begin{figure}[H]
    \centering
    \includegraphics[width=0.65\linewidth]{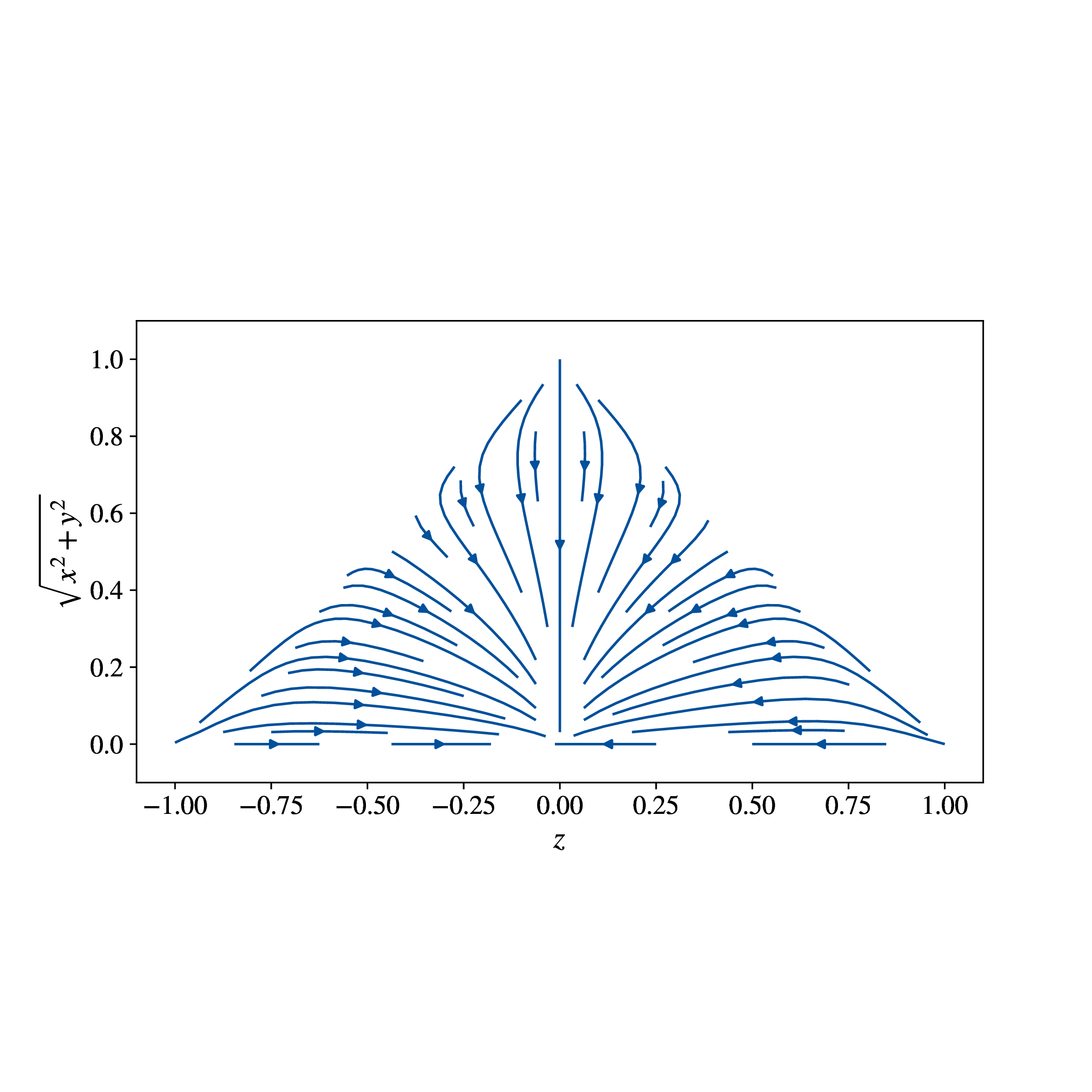}
    \caption{Dynamics of $b(\tau)$ and $\abs{a_1(\tau)}$ in the case $m=\gamma=1$. The ODE is only valid when $H_0(\tau)$ is uniformly positive, that is if $\abs{b(\tau)} + \abs{a_1(\tau)} < 1$.}
    \label{fig:phase-portrait-abs-a1}
\end{figure}


\section*{Statements and Declarations}

\noindent\textsc{Conflict of Interest. }
The authors have no competing interests to declare that are
relevant to the content of this article.
\medskip

\noindent\textsc{Data Availability. }
The source code for the numerical simulations can be provided by the authors upon request.
\medskip

\noindent\textsc{Funding. }
The project was supported by Deutsche Forschungsgemeinschaft (DFG, German Research Foundation) -- project number 545145736 -- through the scientific network \emph{(In)stability Phenomena in Asymptotic Models in Fluid Dynamics}.
\noindent 
J. J. L. Velázquez gratefully acknowledges the support by the Deutsche Forschungsgemeinschaft (DFG)
through the collaborative research centre “Analysis of Criticality: from Complex Phenomena to Models and Estimates” of the University of Bonn (CRC 1720, Project-ID
539309657) and the DFG under Germany’s Excellence Strategy - EXC2047/2-390685813.
J. Joussen is funded by Deutsche Forschungsgemeinschaft (DFG,
German Research Foundation) under Germany´s
Excellence Strategy – EXC 2075 – 390740016.

\printbibliography

\end{document}